\begin{document}
\newtheorem{prop-defi}[smfthm]{Proposition-DÈfinition}
\newtheorem{notas}[smfthm]{Notations}
\newtheorem{nota}[smfthm]{Notation}
\newtheorem{defis}[smfthm]{DÈfinitions}
\newtheorem{hypo}[smfthm]{HypothËse}
\newtheorem*{theo*}{ThÈorËme}
\newtheorem*{hyp*}{HypothËses}

\def\Tm{{\mathbb T}}
\def\Um{{\mathbb U}}
\def\Am{{\mathbb A}}
\def\Fm{{\mathbb F}}
\def\Mm{{\mathbb M}}
\def\Nm{{\mathbb N}}
\def\Pm{{\mathbb P}}
\def\Qm{{\mathbb Q}}
\def\Zm{{\mathbb Z}}
\def\Dm{{\mathbb D}}
\def\Cm{{\mathbb C}}
\def\Rm{{\mathbb R}}
\def\Gm{{\mathbb G}}
\def\Lm{{\mathbb L}}
\def\Km{{\mathbb K}}
\def\Om{{\mathbb O}}
\def\Em{{\mathbb E}}
\def\Xm{{\mathbb X}}

\def\BC{{\mathcal B}}
\def\QC{{\mathcal Q}}
\def\TC{{\mathcal T}}
\def\ZC{{\mathcal Z}}
\def\AC{{\mathcal A}}
\def\CC{{\mathcal C}}
\def\DC{{\mathcal D}}
\def\EC{{\mathcal E}}
\def\FC{{\mathcal F}}
\def\GC{{\mathcal G}}
\def\HC{{\mathcal H}}
\def\IC{{\mathcal I}}
\def\JC{{\mathcal J}}
\def\KC{{\mathcal K}}
\def\LC{{\mathcal L}}
\def\MC{{\mathcal M}}
\def\NC{{\mathcal N}}
\def\OC{{\mathcal O}}
\def\PC{{\mathcal P}}
\def\UC{{\mathcal U}}
\def\VC{{\mathcal V}}
\def\XC{{\mathcal X}}
\def\SC{{\mathcal S}}

\def\BF{{\mathfrak B}}
\def\AF{{\mathfrak A}}
\def\GF{{\mathfrak G}}
\def\EF{{\mathfrak E}}
\def\CF{{\mathfrak C}}
\def\DF{{\mathfrak D}}
\def\JF{{\mathfrak J}}
\def\LF{{\mathfrak L}}
\def\MF{{\mathfrak M}}
\def\NF{{\mathfrak N}}
\def\XF{{\mathfrak X}}
\def\UF{{\mathfrak U}}
\def\KF{{\mathfrak K}}
\def\FF{{\mathfrak F}}

\def \longmapright#1{\smash{\mathop{\longrightarrow}\limits^{#1}}}
\def \mapright#1{\smash{\mathop{\rightarrow}\limits^{#1}}}
\def \lexp#1#2{\kern \scriptspace \vphantom{#2}^{#1}\kern-\scriptspace#2}
\def \linf#1#2{\kern \scriptspace \vphantom{#2}_{#1}\kern-\scriptspace#2}
\def \linexp#1#2#3 {\kern \scriptspace{#3}_{#1}^{#2} \kern-\scriptspace #3}

\def \Ext{\mathop{\mathrm{Ext}}\nolimits}
\def \ad{\mathop{\mathrm{ad}}\nolimits}
\def \sh{\mathop{\mathrm{Sh}}\nolimits}
\def \irr{\mathop{\mathrm{Irr}}\nolimits}
\def \FH{\mathop{\mathrm{FH}}\nolimits}
\def \FPH{\mathop{\mathrm{FPH}}\nolimits}
\def \coh{\mathop{\mathrm{Coh}}\nolimits}
\def \res{\mathop{\mathrm{Res}}\nolimits}
\def \op{\mathop{\mathrm{op}}\nolimits}
\def \rec {\mathop{\mathrm{rec}}\nolimits}
\def \art{\mathop{\mathrm{Art}}\nolimits}
\def \vol {\mathop{\mathrm{vol}}\nolimits}
\def \cusp {\mathop{\mathrm{Cusp}}\nolimits}
\def \scusp {\mathop{\mathrm{Scusp}}\nolimits}
\def \Iw {\mathop{\mathrm{Iw}}\nolimits}
\def \JL {\mathop{\mathrm{JL}}\nolimits}
\def \speh {\mathop{\mathrm{Speh}}\nolimits}
\def \isom {\mathop{\mathrm{Isom}}\nolimits}
\def \Vect {\mathop{\mathrm{Vect}}\nolimits}
\def \groth {\mathop{\mathrm{Groth}}\nolimits}
\def \hom {\mathop{\mathrm{Hom}}\nolimits}
\def \deg {\mathop{\mathrm{deg}}\nolimits}
\def \val {\mathop{\mathrm{val}}\nolimits}
\def \det {\mathop{\mathrm{det}}\nolimits}
\def \rep {\mathop{\mathrm{Rep}}\nolimits}
\def \spec {\mathop{\mathrm{Spec}}\nolimits}
\def \fr {\mathop{\mathrm{Fr}}\nolimits}
\def \frob {\mathop{\mathrm{Frob}}\nolimits}
\def \ker {\mathop{\mathrm{Ker}}\nolimits}
\def \im {\mathop{\mathrm{Im}}\nolimits}
\def \Red {\mathop{\mathrm{Red}}\nolimits}
\def \red {\mathop{\mathrm{red}}\nolimits}
\def \aut {\mathop{\mathrm{Aut}}\nolimits}
\def \diag {\mathop{\mathrm{diag}}\nolimits}
\def \spf {\mathop{\mathrm{Spf}}\nolimits}
\def \Def {\mathop{\mathrm{Def}}\nolimits}
\def \twist {\mathop{\mathrm{Twist}}\nolimits}
\def \supp {\mathop{\mathrm{Supp}}\nolimits}
\def \Id {{\mathop{\mathrm{Id}}\nolimits}}
\def \lie {{\mathop{\mathrm{Lie}}\nolimits}}
\def \Ind{\mathop{\mathrm{Ind}}\nolimits}
\def \ind {\mathop{\mathrm{ind}}\nolimits}
\def \bad {\mathop{\mathrm{Bad}}\nolimits}
\def \top {\mathop{\mathrm{Top}}\nolimits}
\def \ker {\mathop{\mathrm{Ker}}\nolimits}
\def \coker {\mathop{\mathrm{Coker}}\nolimits}
\def \gal {{\mathop{\mathrm{Gal}}\nolimits}}
\def \Nr {{\mathop{\mathrm{Nr}}\nolimits}}
\def \nrd {{\mathop{\mathrm{Nrd}}\nolimits}}
\def \rn {{\mathop{\mathrm{rn}}\nolimits}}
\def \tr {{\mathop{\mathrm{Tr~}}\nolimits}}
\def \Sp {{\mathop{\mathrm{Sp}}\nolimits}}
\def \st {{\mathop{\mathrm{St}}\nolimits}}
\def \sp{{\mathop{\mathrm{Sp}}\nolimits}}
\def \perv{\mathop{\mathrm{Perv}}\nolimits}
\def \tor {{\mathop{\mathrm{Tor}}\nolimits}}
\def \gr {{\mathop{\mathrm{gr}}\nolimits}}
\def \nilp {{\mathop{\mathrm{Nilp}}\nolimits}}
\def \obj {{\mathop{\mathrm{Obj}}\nolimits}}
\def \Spl {{\mathop{\mathrm{Spl}}\nolimits}}

\def \rem{{\noindent\textit{Remarque:~}}}
\def \rems{{\noindent\textit{Remarques:~}}}
\def \ext {{\mathop{\mathrm{Ext}}\nolimits}}
\def \End {{\mathop{\mathrm{End}}\nolimits}}

\def\semi{\mathrel{>\!\!\!\triangleleft}}
\let \DS=\displaystyle
\def\HT{{\mathop{\mathcal{HT}}\nolimits}}

\def \hi{\HC}
\newcommand*{\tarrow}{\relbar\joinrel\mid\joinrel\twoheadrightarrow}
\newcommand*{\harrow}{\lhook\joinrel\relbar\joinrel\mid\joinrel\rightarrow}
\newcommand*{\rarrow}{\relbar\joinrel\mid\joinrel\rightarrow}
\def \coim {{\mathop{\mathrm{Coim}}\nolimits}}
\def \can {{\mathop{\mathrm{can}}\nolimits}}
\def\LFF{{\mathscr L}}

\setcounter{secnumdepth}{3} \setcounter{tocdepth}{3}

\def \Fil{\mathop{\mathrm{Fil}}\nolimits}
\def \CoFil{\mathop{\mathrm{CoFil}}\nolimits}
\def \Fill{\mathop{\mathrm{Fill}}\nolimits}
\def \CoFill{\mathop{\mathrm{CoFill}}\nolimits}
\def\SF{{\mathfrak S}}
\def\PF{{\mathfrak P}}
\def \EFil{\mathop{\mathrm{EFil}}\nolimits}
\def \EFill{\mathop{\mathrm{EFill}}\nolimits}
\def \FP{\mathop{\mathrm{FP}}\nolimits}

\let \longto=\longrightarrow
\let \oo=\infty

\let \d=\delta
\let \k=\kappa

\newcommand{\marque}{\addtocounter{smfthm}{1}
{\smallskip \noindent \textit{\thesmfthm}~---~}}

\renewcommand\atop[2]{\ensuremath{\genfrac..{0pt}{1}{#1}{#2}}}

\title[Congruences automorphes et classes de cohomologie de torsion]{Congruences automorphes et torsion dans la cohomologie d'une variÈtÈ de Shimura unitaire simple}

\alttitle{Automorphic congruences and torsion in the cohomology of a simple unitary Shimura variety}

\author{Boyer Pascal}
\email{boyer@math.univ-paris13.fr}
\address{UniversitÈ Paris 13, Sorbonne Paris CitÈ \\
LAGA, CNRS, UMR 7539\\ 
F-93430, Villetaneuse (France) \\
PerCoLaTor: ANR-14-CE25}

\thanks{L'auteur remercie l'ANR pour son soutien dans le cadre du projet PerCoLaTor 14-CE25.}

\frontmatter

\begin{abstract}
Nous donnons d'une part un procÈdÈ relativement souple de construction de classes de cohomologie de 
torsion dans la cohomologie d'une variÈtÈ 
de Shimura de type Kottwitz-Harris-Taylor ‡ coefficients dans un systËme local \og pas trop rÈgulier \fg.
D'autre part nous montrons qu'associÈe ‡ toute classe de cohomologie de torsion, il existe une infinitÈ 
de reprÈsentations irrÈductibles automorphes cohomologiques,
en caractÈristique nulle, qui sont deux ‡ deux non isomorphes et
faiblement congruentes au sens de \cite{vigneras-langlands}.
\end{abstract}

\begin{altabstract}
We first give a relative flexible process to construct torsion cohomology classes for Shimura varieties of 
Kottwitz-Harris-Taylor type
with coefficient in a non too regular local system. We then prove that associated to each torsion
cohomology class, there exists a infinity of irreducible automorphic representations in characteristic zero, 
which are pairwise non isomorphic and weakly congruent in the sense of \cite{vigneras-langlands}.

\end{altabstract}

\subjclass{11F70, 11F80, 11F85, 11G18, 20C08}

\keywords{VariÈtÈs de Shimura, cohomologie de torsion, idÈal maximal de l'algËbre de Hecke, 
localisation de la cohomologie, reprÈsentation galoisienne}

\altkeywords{Shimura varieties, torsion in the cohomology, maximal ideal of the Hecke algebra,
localized cohomology, galois representation}

\maketitle

\pagestyle{headings} \pagenumbering{arabic}

\tableofcontents
%
%
\renewcommand{\theequation}{\arabic{section}.\arabic{subsection}.\arabic{smfthm}}

\section*{Introduction}
\renewcommand{\thesmfthm}{\arabic{section}.\arabic{subsection}.\arabic{smfthm}}

Soient $F=EF^+$ un corps CM et $X_{I,\eta} \rightarrow \spec F$ une variÈtÈ de Shimura de type 
Kottwitz-Harris-Taylor associÈe donc ‡ une groupe de similitudes $G/\Qm$ et un sous-groupe compact
ouvert $I$. On note $\Spl(I)$ l'ensemble des nombres premiers $p \neq l$ tels que 
\begin{itemize}
\item $p=uu^c$ est dÈcomposÈ dans l'extension quadratique imaginaire $E/Q$,

\item $G(\Qm_p)$ est dÈcomposÈ, i.e. de la forme $\Qm_p^\times \times \prod_{v |u} GL_d(F_v)$ et

\item la composante locale en $p$ de $I$ est maximale.
\end{itemize}
Pour $l$ un nombre premier, et $\xi$ une reprÈsentation irrÈductible algÈbrique de $G$ donnant lieu
‡ un $\overline \Zm_l$-systËme local $V_\xi$ sur $X_{I,\bar \eta}$, dans \cite{boyer-imj} on Ètudie la 
torsion dans les $H^i(X_{I,\bar \eta},V_\xi)$
et on montre en particulier que d'un point de vue automorphe, rien de nouveau apparait au sens o˘
toutes les classes de torsion se relËvent en caractÈristique nulle dans la cohomologie en degrÈ mÈdian
d'une variÈtÈ d'Igusa. D'aprËs \cite{scholze-cara}, le phÈnomËne semble partagÈ par une large classe de
variÈtÈs de Shimura. Ainsi au moins du point de vue de la correspondance
de Langlands, l'existence de classes de cohomologie de torsion dans la cohomologie
des variÈtÈs de Shimura n'est d'aucun intÈrÍt. 

Le but de ce travail, qui prolonge \cite{boyer-imj}, est alors 
dans un premier temps, de donner un procÈdÈ \og relativement souple \fg{} de construction de 
classes de cohomologie de torsion pour ces variÈtÈs de Shimura. Pour ce faire, on choisit
une reprÈsentation $\xi$ pas trop rÈguliËre au sens de \cite{lan-suh} de faÁon ‡ ce que
la $\overline \Qm_l$-cohomologie de $X_{I,\bar \eta}$ ‡ coefficients dans $V_\xi$,
ne soit pas concentrÈe en degrÈ mÈdian.
Le principe est alors de calculer la cohomologie via la suite spectrale des cycles Èvanescents en
une place $v$ divisant suffisamment le niveau, puis d'utiliser les filtrations du $\overline \Zm_l$-faisceau 
pervers des 
cycles Èvanescents $\Psi_{I,v}$ construites dans \cite{boyer-torsion} et dont les graduÈs se dÈcrivent
‡ l'aide des faisceaux pervers d'Harris-Taylor. Ces derniers sont en particulier indexÈs par une
reprÈsentation irrÈductible cuspidale de $GL_g(F_v)$ avec $1 \leq g \leq d$. Comme dÈj‡ remarquÈ
dans \cite{boyer-aif}, lorsque la rÈduction modulo $l$ d'un tel $\pi_v$ n'est plus supercuspidale,
la cohomologie du faisceau pervers associÈ admet de la torsion pourvu que le niveau
soit suffisamment grand, cf. la proposition \ref{prop-torsion0}.
Tout l'objet du \S \ref{para-torsion-coho} est alors de montrer que cette torsion subsiste dans 
l'aboutissement de la suite spectrale associÈe ‡ la filtration de $\Psi_{I,v}$. Voici une version
imprÈcise du rÈsultat principal obtenu dans cette direction, cf. \ref{theo-equivalence2}.

\begin{theo*} Supposons qu'il existe 
\begin{itemize}
\item une $\overline \Fm_l$-reprÈsentation irrÈductible supercuspidale
$\varrho$ de $GL_g(F_v)$ dont la droite de Zelevinsky est de cardinal $2$, 

\item un $\overline \Zm_l$-relËvement $\pi_v$ ainsi 

\item qu'une reprÈsentation $\Pi$ automorphe irrÈductible et $\xi$-cohomologique, 
dont la composante locale en $v$ est de la forme $\speh_s (\pi_v) \times ?$
avec $s=\lfloor \frac{d}{g} \rfloor \geq 4$ et $?$ une reprÈsentation quelconque.
\end{itemize}
Alors pour un niveau fini $I$ tel que $\Pi$ possËde des vecteurs non nuls invariants sous $I$,
la torsion de la cohomologie de $X_{I,\bar \eta}$ ‡ coefficients dans $V_\xi$, est non
nulle.
\end{theo*}

Dans un deuxiËme temps au \S \ref{para-principal}, on propose une application automorphe
‡ l'existence de classes de torsion dans la cohomologie des variÈtÈs de Shimura de type
Kottwitz-Harris-Taylor, en construisant des congruences automorphes faibles au sens du \S 3 de 
\cite{vigneras-langlands}. Pour formuler une version allÈgÈe du rÈsultat principal, cf. 
le corollaire \ref{coro-principal}, notons suivant \ref{nota-spl}, $\Spl(I)$ l'ensemble des places
de $F$ au dessus d'un nombre premier
$p \neq l$ dÈcomposÈ dans $E$ et telle que $B_v^\times \simeq GL_d(F_v)$

\begin{theo*}
Pour $\mathfrak m$ l'idÈal maximal d'une algËbre de Hecke
non ramifiÈe, associÈ ‡ une classe de torsion dans la cohomologie de $X_{I,\bar \eta}$
‡ coefficient dans $V_\xi$, il existe une famille 
$$\Pi(p)$$ 
indexÈe par
les nombres premiers $p \in \Spl (I)$, de reprÈsentations irrÈductibles automorphes 
$\xi$-cohomologiques, telle que
pour premier $q \in \Spl(I)$ distinct de $p$, la composante locale en $q$ (resp. en $p$) 
de $\Pi(p)$ est non ramifiÈe (resp. est ramifiÈe), ses paramËtres de Satake modulo $l$ Ètant donnÈs
par $\mathfrak m$. 
\end{theo*}

En particulier pour $p \neq q$ des premiers de $\Spl(I)$, 
les reprÈsentations $\Pi(p)$ et $\Pi(q)$ ne sont pas isomorphes alors qu'elles sont non ramifiÈes en
toute place de $\Spl(I)- \{ p,q \}$ et y partagent les mÍmes paramËtres de Satake modulo $l$.
Au sens du \S 3 de \cite{vigneras-langlands}, on dit que ces reprÈsentations sont
faiblement congruentes.

\section{Rappels}

\subsection{sur les reprÈsentations}

ConsidÈrons un corps local $K$ muni de sa valeur absolue $| - |$: on note $q$ le cardinal
de son corps rÈsiduel. Pour $\pi$ une reprÈsentation de $GL_d(K)$ et $n \in \frac{1}{2} \Zm$, on note 
$$\pi \{ n \}:= \pi \otimes q^{-n \val \circ \det}.$$

\begin{notas}
Pour $\pi_1$ et $\pi_2$ des reprÈsentations de respectivement $GL_{n_1}(K)$ et
$GL_{n_2}(K)$, $\pi_1 \times \pi_2$ dÈsigne l'induite parabolique normalisÈe
$$\pi_1 \times \pi_2:=\ind_{P_{n_1,n_1+n_2}(K)}^{GL_{n_1+n_2}(K)}
\pi_1 \{ \frac{n_2}{2} \} \otimes \pi_2 \{-\frac{n_1}{2} \},$$
o˘ pour toute suite $\underline r=(0< r_1 < r_2 < \cdots < r_k=d)$, on note $P_{\underline r}$ 
le sous-groupe parabolique de $GL_d$ standard associÈ au sous-groupe de Levi 
$$GL_{r_1} \times GL_{r_2-r_1} \times \cdots \times GL_{r_k-r_{k-1}}.$$ 
\end{notas}

Rappelons qu'une reprÈsentation irrÈductible est dite supercuspidale si elle n'est pas
un sous-quotient d'une induite parabolique propre.

\begin{defi} \label{defi-rep} (cf. \cite{zelevinski2} \S 9 et \cite{boyer-compositio} \S 1.4)
Soient $g$ un diviseur de $d=sg$ et $\pi$ une reprÈsentation cuspidale irrÈductible de $GL_g(K)$. L'induite
$$\pi\{ \frac{1-s}{2} \} \times \pi \{ \frac{3-s}{2} \} \times \cdots \times \pi \{ \frac{s-1}{2} \}$$ 
possËde un unique quotient (resp. sous-espace) irrÈductible notÈ habituellement $\st_s(\pi)$ (resp.
$\speh_s(\pi)$); c'est une reprÈsentation de Steinberg (resp. de Speh) gÈnÈralisÈe.
\end{defi}

\rem du point de vue galoisien, via la correspondance de Langlands locale, la reprÈsentation
$\speh_s(\pi)$ correspond ‡ la somme directe $\sigma(\frac{1-s}{2}) \oplus \cdots \oplus
\sigma(\frac{s-1}{2})$ o˘ $\sigma$ correspond ‡ $\pi$. Plus gÈnÈralement pour
$\pi$ une reprÈsentation irrÈductible quelconque de $GL_g(K)$ associÈe ‡ $\sigma$
par la correspondance de Langlands locale, on notera $\speh_s(\pi)$ la reprÈsentation
de $GL_{sg}(K)$ associÈe, par la correspondance de Langlands locale, ‡ 
$\sigma(\frac{1-s}{2}) \oplus \cdots \oplus \sigma(\frac{s-1}{2})$.

\begin{defi} Une $\bar \Qm_l$-reprÈsentation lisse de longueur finie $\pi$ de $GL_d(K)$ est dite \emph{entiËre} s'il existe une extension finie $E/ \Qm_l$ contenue dans
$\bar \Qm_l$, d'anneau des entiers 
$\OC_E$ et une $\OC_E$-reprÈsentation $L$ de $GL_d(K)$, qui est un 
$\OC_E$-module libre, telle que $\bar \Qm_l \otimes_{\OC_E} L \simeq \pi$
et tel que $L$ est un $\OC_E GL_n(K)$-module de type fini. 
Soit $\kappa_E$ le corps rÈsiduel de $\OC_E$, on dit que 
$$\bar \Fm_l \otimes_{\kappa_E} \kappa_E \otimes_{\OC_E} L$$ 
est la rÈduction modulo $l$ de $L$. 
\end{defi}

\rem le \textit{principe de Brauer-Nesbitt} affirme que 
la semi-simplifiÈe de $\bar \Fm_l \otimes_{\OC_E} L$ est une $\bar \Fm_l$-reprÈsentation de $GL_d(K)$ 
de longueur finie qui ne dÈpend pas du choix de $L$. Son image dans le groupe de
Grothendieck sera notÈe $r_l(\pi)$ et dite \textit{la rÈduction modulo $l$ de $\pi$}.

\noindent \textit{Exemples}:  d'aprËs  \cite{vigneras-induced} V.9.2 ou \cite{dat-jl} \S 2.2.3,
la rÈduction modulo $l$ de $\speh_s(\pi)$ est irrÈductible.

\begin{defi} 
Pour $\varrho$ une $\overline \Fm_l$-reprÈsentation irrÈductible supercuspidale $\varrho$
de $GL_{g_{-1}(\varrho)}(F_v)$, on note
$$\epsilon(\varrho)=\sharp \bigl \{ \varrho \{ i \}:~ i \in \Zm \bigr \}$$ 
le cardinal de la droite de Zelevinsky de $\varrho$, et
$$m(\varrho)=\left \{ \begin{array}{ll} \epsilon(\varrho) & \hbox{si } \epsilon(\varrho)>1, \\ l & \hbox{sinon.}
\end{array} \right.$$
\end{defi}

\rem $\epsilon(\varrho)$ est un diviseur de l'ordre $e_l(q)$ de $q$ modulo $l$.

\begin{defi} (cf. \cite{vigneras-induced} III.5.14)
Pour tout $s$ de la forme
$$s=1,m(\varrho),m(\varrho)l,\cdots,m(\varrho) l^u,\cdots,$$
l'induite parabolique
$$\varrho\{\frac{1-s}{2} \} \times \cdots \times \varrho \{ \frac{s-1}{2} \}$$ 
admet un unique sous-quotient cuspidal que l'on note respectivement 
$$\rho_{-1} \simeq \varrho, ~ \rho_0, ~  \rho_1, ~ \cdots, ~  \rho_u, ~\cdots$$
\end{defi}

\rem toute $\overline \Fm_l$-reprÈsentation irrÈductible cuspidale (resp. supercuspidale) est de la forme
$\rho_u$ (resp. $\varrho_{-1}$) pour $u \geq -1$ et $\varrho$ une reprÈsentation irrÈductible
supercuspidale.

\begin{defi} \label{defi-varrho-type}
La rÈduction modulo $l$ d'une $\overline \Qm_l$-reprÈsentation irrÈductible cuspidale
$\pi_v$ Ètant irrÈductible cuspidale et donc de la forme $\rho_u$, on dira qu'elle de $\varrho$-type $u$.
Pour $u \geq 0$, on notera 
$$g_u(\varrho):=g_{-1}(\varrho)m(\varrho)l^u,$$
et $\cusp(\varrho,u)$ l'ensemble des classes d'Èquivalence des reprÈsentations irrÈductibles
cuspidales de $\varrho$-type $u$.
\end{defi}

\rem lorsque $\varrho$ sera clairement fixÈ, on notera plus simplement $g_u$ pour $g_u(\varrho)$.

\begin{notas} On note $D_{K,d}$ l'algËbre ‡ division centrale sur $K$ d'invariant $1/d$ et d'ordre
maximal $\DC_{K,d}$: la norme rÈduite $\nrd$ composÈe avec la valuation donne une identification
$D_{K,d}/\DC_{K,d} \simeq \Zm$. Pour $\pi$ une $\overline \Qm_l$-reprÈsentation irrÈductible 
supercuspidale de $GL_g(K)$ avec $d=sg$, 
$$\pi[s]_D$$
dÈsignera la reprÈsentation de $D_{K,d}^\times$ associÈe ‡ $\st_t(\pi^\vee)$ par la correspondance
de Jacquet-Langlands.
\end{notas}

\rem on rappelle qu'avec ces notations, toute reprÈsentation irrÈductible admissible de $D_{K,d}^\times$
est de la forme $\pi[s]_D$ pour $g$ dÈcrivant les diviseurs de $d$ et $\pi$ les reprÈsentations
irrÈductibles cuspidales de $GL_g(K)$.

Soit $\pi$ une $\overline \Qm_l$-reprÈsentation irrÈductible supercuspidale entiËre de $\varrho$-type $u$.
On note alors $\iota$ l'image de $\speh_{s}(\varrho^\vee)$ par la correspondance de Jacquet-Langlands 
modulaire dÈfinie par J.-F. Dat au \S 1.2.4 de \cite{dat-jl}. Autrement dit si $\pi_\varrho$ est 
un relËvement cuspidal de $\varrho$, alors 
$$\iota=r_l \bigl ( \pi_\varrho[s]_D \bigr ).$$

\begin{prop} \phantomsection \label{prop-red-D} (cf. \cite{dat-jl} proposition 2.3.3)
Avec les notations prÈcÈdentes, la rÈduction modulo $l$ de $\pi[s]_D$ est de la forme
$$\iota \{-\frac{m(\tau)-1}{2} \} \oplus \iota \{-\frac{m(\tau)-3}{2} \} \oplus \cdots \oplus \iota \{ \frac{m(\tau)-1}{2} \}$$
o˘ $\iota \{ n \}$ dÈsigne $\iota \otimes q^{-n \val \circ \nrd}$ et $m(\tau):=m(\varrho)l^u$.
\end{prop}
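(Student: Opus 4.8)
\noindent\textit{D\'emonstration.} --- On renvoie \`a la proposition 2.3.3 de \cite{dat-jl} ; en voici la strat\'egie de preuve. Par d\'efinition $\pi[s]_D=\JL^{-1}(\st_s(\pi^\vee))$ et $\pi_\varrho[s]_D=\JL^{-1}(\st_s(\pi_\varrho^\vee))$. La correspondance de Jacquet--Langlands modulaire du \S 1.2.4 de \cite{dat-jl} --- compatible par construction \`a la r\'eduction modulo $l$ avec la correspondance de Jacquet--Langlands sur $\overline{\Qm}_l$ (au prix de l'\'echange entre repr\'esentations de Steinberg et de Speh), et transformant la torsion par $|\det|$ en la torsion par $|\nrd|$ --- ram\`ene alors le calcul de $r_l(\pi[s]_D)$ \`a celui de $r_l(\st_s(\pi^\vee))$ du c\^ot\'e de $GL_d(K)$, qu'elle transporte ensuite du c\^ot\'e de $D_{K,d}^\times$.

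Le point central est ce calcul de r\'eduction. Comme $\pi^\vee$ est supercuspidale enti\`ere de $\varrho^\vee$-type $u$, sa r\'eduction $\rho_u^\vee$ est, d'apr\`es \cite{vigneras-induced}, l'unique sous-quotient cuspidal de $\varrho^\vee\{\tfrac{1-m(\tau)}{2}\}\times\cdots\times\varrho^\vee\{\tfrac{m(\tau)-1}{2}\}$ avec $m(\tau)=m(\varrho)l^u$ : son support supercuspidal est donc la droite $\{\varrho^\vee\{i\}:~i=\tfrac{1-m(\tau)}{2},\dots,\tfrac{m(\tau)-1}{2}\}$, de cardinal $m(\tau)$. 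Du c\^ot\'e galoisien cela signifie que le param\`etre de $\pi$ est induit depuis l'extension non ramifi\'ee de degr\'e $m(\tau)$, et que sa r\'eduction modulo $l$ est la somme $\bigoplus_i r_l(\sigma_\varrho)\{i\}$ des $m(\tau)$ translat\'es du param\`etre de $\pi_\varrho$. En ins\'erant ce support dans le segment de longueur $s$ qui d\'efinit $\st_s(\pi^\vee)$, on obtient que, dans le groupe de Grothendieck, $r_l(\st_s(\pi^\vee))$ se scinde en les $m(\tau)$ translat\'es $r_l(\st_s(\pi_\varrho^\vee))\{i\}$, $i$ parcourant $\{\tfrac{1-m(\tau)}{2},\dots,\tfrac{m(\tau)-1}{2}\}$ : le segment de Steinberg index\'e par la cuspidale $\rho_u^\vee$ se d\'ecompose en $m(\tau)$ segments parall\`eles, index\'es chacun par la supercuspidale $\varrho^\vee$. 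Un contr\^ole du caract\`ere central en fixe la normalisation sym\'etrique, et le comptage des dimensions $m(\tau)\cdot s\,g_{-1}(\varrho)=sg=d$ montre qu'aucun constituant ne manque.

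En transportant cette identit\'e par la correspondance de Jacquet--Langlands modulaire, qui envoie la r\'eduction de $\JL^{-1}(\st_s(\pi_\varrho^\vee))$ sur $\iota$, on obtient bien
$$r_l(\pi[s]_D)=\iota\{-\tfrac{m(\tau)-1}{2}\}\oplus\iota\{-\tfrac{m(\tau)-3}{2}\}\oplus\cdots\oplus\iota\{\tfrac{m(\tau)-1}{2}\}.$$
La difficult\'e se concentre dans l'\'etape pr\'ec\'edente : la correspondance de Langlands modulo $l$ n'\'etant pas compatible \`a la r\'eduction modulo $l$, on ne peut conclure directement sur les param\`etres de Weil--Deligne ; c'est le passage au c\^ot\'e de $D_{K,d}^\times$, o\`u la r\'eduction modulo $l$ d'une repr\'esentation irr\'eductible est sans multiplicit\'e, qui rend praticable le d\'enombrement pr\'ecis des constituants et de leurs torsions.
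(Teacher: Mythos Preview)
The paper itself gives no proof of this proposition: it is simply quoted from \cite{dat-jl}, proposition 2.3.3. Your write-up correctly points back to that reference and tries to sketch the argument, which is more than the paper does. However, the sketch contains a genuine gap.

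The problem is in your second paragraph. You write that ``dans le groupe de Grothendieck, $r_l(\st_s(\pi^\vee))$ se scinde en les $m(\tau)$ translat\'es $r_l(\st_s(\pi_\varrho^\vee))\{i\}$''. This does not typecheck: $\pi$ is a cuspidal of $GL_{g_u}(K)$ with $g_u=g_{-1}m(\tau)$, so $\st_s(\pi^\vee)$ lives on $GL_{sg_u}(K)$, whereas $\st_s(\pi_\varrho^\vee)$ lives on $GL_{sg_{-1}}(K)$. Twists by unramified characters do not change the ambient group, so a sum of $m(\tau)$ translates of the latter is still a virtual representation of $GL_{sg_{-1}}(K)$, not of $GL_{sg_u}(K)$. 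There is no identity of the shape you claim on the $GL$ side, and hence nothing to ``transport'' via the modular Jacquet--Langlands map. Your closing remark that the counting is really carried out on the $D_{K,d}^\times$ side is in fact the whole point, not an afterthought: Dat's argument in \cite{dat-jl} proceeds directly on $D_{K,d}^\times$, exploiting that $D_{K,d}^\times$ is compact modulo centre so that irreducible representations are finite-dimensional and their reductions can be read off from an explicit type-theoretic description. The number $m(\tau)$ enters through the structure of the restriction of $\pi[s]_D$ to $\DC_{K,d}^\times$ and the action of the uniformiser, not through a prior decomposition on the linear side.

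If you want to keep a sketch here, the honest version is: the result is proved in \cite{dat-jl} by a direct computation of $r_l(\pi[s]_D)$ on the division-algebra side, using that the restriction of $\pi[s]_D$ to the units of the maximal order is explicitly known and that its mod-$l$ reduction breaks into $m(\tau)$ unramified twists of the reduction of $\pi_\varrho[s m(\tau)]_D$; the latter is by definition $\iota$.
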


\subsection{GÈomÈtrie des variÈtÈs de Shimura unitaires simples}

Soit $F=F^+ E$ un corps CM avec $E/\Qm$ quadratique imaginaire, dont on fixe 
un plongement rÈel $\tau:F^+ \hookrightarrow \Rm$. Pour $v$ une place de $F$, on notera 
\begin{itemize}
\item $F_v$ le complÈtÈ du localisÈ de $F$ en $v$,

\item $\OC_v$ l'anneau des entiers de $F_v$,

\item $\varpi_v$ une uniformisante et

\item $q_v$ le cardinal du corps rÈsiduel $\kappa(v)=\OC_v/(\varpi_v)$.
\end{itemize}

Soit $B$ une algËbre ‡ 
division centrale sur $F$ de dimension $d^2$ telle qu'en toute place $x$ de $F$,
$B_x$ est soit dÈcomposÈe soit une algËbre ‡ division et on suppose $B$ 
munie d'une involution de
seconde espËce $*$ telle que $*_{|F}$ est la conjugaison complexe $c$. Pour
$\beta \in B^{*=-1}$, on note $\sharp_\beta$ l'involution $x \mapsto x^{\sharp_\beta}=\beta x^*
\beta^{-1}$ et $G/\Qm$ le groupe de similitudes, notÈ $G_\tau$ dans \cite{h-t}, dÈfini
pour toute $\Qm$-algËbre $R$ par 
$$
G(R)  \simeq   \{ (\lambda,g) \in R^\times \times (B^{op} \otimes_\Qm R)^\times  \hbox{ tel que } 
gg^{\sharp_\beta}=\lambda \}
$$
avec $B^{op}=B \otimes_{F,c} F$. 
Si $x$ est une place de $\Qm$ dÈcomposÈe $x=yy^c$ dans $E$ alors 
\addtocounter{smfthm}{1}
\begin{equation} \label{eq-facteur-v}
G(\Qm_x) \simeq (B_y^{op})^\times \times \Qm_x^\times \simeq \Qm_x^\times \times
\prod_{z_i} (B_{z_i}^{op})^\times,
\end{equation}
o˘, en identifiant les places de $F^+$ au dessus de $x$ avec les places de $F$ au dessus de $y$,
$x=\prod_i z_i$ dans $F^+$.

Dans \cite{h-t}, les auteurs justifient l'existence d'un $G$ comme ci-dessus tel qu'en outre:
\begin{itemize}
\item si $x$ est une place de $\Qm$ qui n'est pas dÈcomposÈe dans $E$ alors
$G(\Qm_x)$ est quasi-dÈployÈ;

\item les invariants de $G(\Rm)$ sont $(1,d-1)$ pour le plongement $\tau$ et $(0,d)$ pour les
autres. 
\end{itemize}

Rappelons, cf. \cite{h-t} bas de la page 90, qu'un sous-groupe ouvert compact de $G(\Am^\oo)$
est dit \og assez petit \fg{} s'il existe une place $x$ pour laquelle la projection de $U^v$ 
sur $G(\Qm_x)$ ne contienne aucun ÈlÈment d'ordre fini autre que l'identitÈ.

\begin{nota}
Soit $\IC$ l'ensemble des sous-groupes compacts ouverts \og assez petits \fg{} de $G(\Am^\oo)$.
Pour $I \in \IC$, on note $X_{I,\eta} \longrightarrow \spec F$ la variÈtÈ de Shimura associÈe, dit
de Kottwitz-Harris-Taylor.
\end{nota}

\rem pour tout $v \in \Spl$, la variÈtÈ $X_{I,\eta}$ admet un modËle projectif $X_{I,v}$ sur $\spec \OC_v$
de fibre spÈciale $X_{I,s_v}$. Pour $I$ dÈcrivant $\IC$, le systËme projectif $(X_{I,v})_{I\in \IC}$ 
est naturellement muni d'une action de $G(\Am^\oo) \times \Zm$  telle que l'action d'un ÈlÈment
$w_v$ du groupe de Weil $W_v$ de $F_v$ est donnÈe par celle de $-\deg (w_v) \in \Zm$,
o˘ $\deg=\val \circ \art^{-1}$ o˘ $\art^{-1}:W_v^{ab} \simeq F_v^\times$ est
l'isomorphisme d'Artin qui envoie les Frobenius gÈomÈtriques sur les uniformisantes.

\begin{notas} (cf. \cite{boyer-invent2} \S 1.3)
Pour $I \in \IC$, la fibre spÈciale gÈomÈtrique $X_{I,\bar s_v}$ admet une stratification de Newton
$$X_{I,\bar s_v}=:X^{\geq 1}_{I,\bar s_v} \supset X^{\geq 2}_{I,\bar s_v} \supset \cdots \supset 
X^{\geq d}_{I,\bar s_v}$$
o˘ $X^{=h}_{I,\bar s_v}:=X^{\geq h}_{I,\bar s_v} - X^{\geq h+1}_{I,\bar s_v}$ est un schÈma 
affine\footnote{cf. par exemple \cite{ito2}}, lisse de pure dimension $d-h$ formÈ des points gÈomÈtriques 
dont la partie connexe du groupe de Barsotti-Tate est de rang $h$.
Pour tout $1 \leq h <d$, nous utiliserons les notations suivantes:
$$i_{h+1}:X^{\geq h+1}_{I,\bar s_v} \hookrightarrow X^{\geq h}_{I,\bar s_v}, \quad
j^{\geq h}: X^{=h}_{I,\bar s_v} \hookrightarrow X^{\geq h}_{I,\bar s_v},$$
ainsi que $j^{=h}=i_h \circ j^{\geq h}$.
\end{notas}

Rappelons que les strates de Newton non supersinguliËres sont gÈomÈtriquement induites au sens
suivant, cf. \cite{boyer-invent}: il existe un sous-schÈma fermÈ $X^{=h}_{I,\bar s_v,\overline{1_h}}$
de $X^{=h}_{I,\bar s_v}$ stable par les correspondances de Hecke associÈes au parabolique
$P_{h,d}(F_v)$ et tel que si la composante en $v$ de $I$ est $\ker(GL_d(\OC_v) \twoheadrightarrow
GL_d(\OC_v/\varpi_v^n))$ alors
$$X^{=h}_{I,\bar s_v}=\coprod_{g \in GL_d(\OC_v/\varpi_v^n)/P_{h,d}(\OC_v/\varpi_v^n)} 
g .X^{=h}_{I,\bar s_v,\overline{1_h}}.$$

\begin{nota}
Pour $g  \in GL_d(\OC_v/\varpi_v^n)/P_{h,d}(\OC_v/\varpi_v^n)$, on notera 
$V_g \subset (\OC_v/\varpi_v^n)^d$, l'image par $g$ du sous-espace $V_{\overline{1_h}}$
engendrÈ par les $h$-premiers vecteurs de la base canonique. Pour $V \subset (\OC_v/\varpi_v^n)^d$
de la forme $g. V_{\overline{1_h}}$, on notera 
$$X^{\geq h}_{I,\bar s_v,V}:=g. X^{\geq h}_{I,\bar s_v,\overline{1_h}}.$$
\end{nota}

\begin{nota} On note
$X^{\geq h}_{I,\bar s_v,\overline{1_h}}$ l'adhÈrence schÈmatique de $X^{=h}_{I,\bar s_v,\overline{1_h}}$,
et pour tout $h' \geq h$, 
$$X^{\geq h'}_{I,\bar s_v,\overline{1_h}} := X^{\geq h}_{I,\bar s_v,\overline{1_h}}
\cap X^{\geq h'}_{I,\bar s_v}.$$
On utiliser $j^{\geq h'}_{\overline{1_h}}: X^{=h'}_{\IC,\bar s_v,\overline{1_h}} \hookrightarrow
X^{\geq h'}_{\IC,\bar s_v,\overline{1_h}}$ ainsi que $j^{=h'}_{\overline{1_h}}:=i^{h'}_{\overline{1_h}}
\circ j^{\geq h'}_{\overline{1_h}}$ o˘ $i^{h'}_{\overline{1_h}}:X^{\geq h'}_{\IC,\bar s_v,\overline{1_h}}
\hookrightarrow X^{\geq 1}_{\IC,\bar s_v}$.
\end{nota}

%
%

\begin{nota} \label{nota-spl}
On fixe un nombre premier $l$ non ramifiÈ dans $E$ et
on note $\Spl$ l'ensemble des places $v$ de $F$ telles que $p_v:=v_{|\Qm} \neq l$
est dÈcomposÈ dans $E$ et $B_v^\times \simeq GL_d(F_v)$. Pour $I \in \IC$, on notera
$\Spl(I)$ le sous-ensemble de $\Spl$ des places ne divisant pas le niveau $I$.
\end{nota}

\subsection{Faisceaux pervers d'Harris-Taylor}

¿ toute reprÈsentation irrÈductible
admissible $\tau_v$ de $D_{v,h}^\times$, Harris et Taylor associent 
un systËme local de Hecke $\FC_{\tau_v,\IC,\overline{1_h}}$ sur $X_{\IC,\bar s_v,\overline{1_h}}^{=h}$,
au sens du \S 1.4.5 de \cite{boyer-invent2}, avec une action de
$G(\Am^{\oo,p}) \times \Qm_p^\times \times P_{h,d}(F_v) \times 
\prod_{i=2}^r (B_{v_i}^{op})^\times  \times \Zm$
qui d'aprËs \cite{h-t} p.136, se factorise par $G^{(h)}(\Am^\oo)/\DC_{F_v,h}^\times$ via
\begin{equation} \label{eq-action-tordue}
(g^{\oo,p},g_{p,0},c,g_v,g_{v_i},k) \mapsto (g^{p,\oo},g_{p,0}q^{k-v (\det g_v^c)}, 
g_v^{et},g_{v_i}, \delta).
\end{equation}
o˘ $G^{(h)}(\Am^\oo):=G(\Am^{\oo,p}) \times \Qm_p^\times \times GL_{d-h}(F_v) \times
\prod_{i=2}^r (B_{v_i}^{op})^\times \times D_{F_v,h}^\times$,
$g_v=\left ( \begin{array}{cc} g_v^c & * \\ 0 & g_v^{et} \end{array} \right )$ et
$\delta \in D_{v,h}^\times$ est tel que $v(\rn (\delta))=k+v(\det g_v^c)$.
On note $\FC_{\tau_v,\IC}$ le faisceau sur $X_{\IC,\bar s_v}^{=h}$ induit associÈ:
$$\FC_{\tau_v,\IC}:= \FC_{\tau_v,\IC,\overline{1_h}} \times_{P_{h,d}(F_v)} GL_d(F_v).$$

Pour $\pi_v$ une reprÈsentation irrÈductible cuspidale de $GL_g(F_v)$ et $t$ un entier strictement positif tel que $tg \leq d$,
on introduit suivant \cite{boyer-invent2} la notation $\FC(\pi_v,t)_{\overline{1_{tg}}}$ 
(resp. $\FC(\pi_v,t)$) pour dÈsigner 
le faisceau de Hecke sur $X_{\IC,\bar s_v,\overline{1_{tg}}}^{=tg}$ (resp. $X_{\IC,\bar s_v}^{=tg}$)
prÈcÈdemment notÈ $\FC_{\pi_v[t]_D,\IC,\overline{1_{tg}}}$ (resp. $\FC_{\pi_v[t]_D,\IC}$).

\begin{nota}
Avec les notations prÈcÈdentes, $HT_{\overline \Qm_l,\overline{1_{tg}}}(\pi_v,\Pi_t)$ dÈsignera le 
$W_v$-faisceau pervers de Hecke sur $X_{\IC,\bar s_v,\overline{1_{tg}}}^{=tg}$ dÈfini par
$$HT_{\overline \Qm_l,\overline{1_{tg}}}(\pi_v,\Pi_t):=\FC(\pi_v,t)_{\overline{1_{tg}}}[d-tg] \otimes 
\Xi^{\frac{tg-d}{2}} \otimes \Pi_t,$$
o˘ $\Pi_t$ (resp. $\Xi$) une reprÈsentation de $GL_{tg}(F_v)$ (resp. de $\Zm$). En ce qui concerne
les actions:
\begin{itemize}
\item celle de $G(\Am^{\oo,v})$ est donnÈe par son action naturelle sur $\FC(\pi_v,t)_{\overline{1_{tg}}}$,

\item celle de $P_{h,d}(F_v)$, ‡ travers son Levi $GL_h(F_v) \times GL_{d-h}(F_v)$,
est donnÈe par l'action naturelle de $GL_{d-h}(F_v)$ sur $\FC(\pi_v,t)_{\overline{1_{tg}}}$ 
et l'action diagonale
de $\Zm$ sur $\FC(\pi_v,t)_1$ et $\Xi^{\frac{tg-d}{2}}$, tandis que celle de $GL_h(F_v)$
est donnÈe par son action diagonale sur $\Pi_t \otimes \Xi^{\frac{tg-d}{2}}$ o˘, sur le deuxiËme facteur,
on utilise la valuation du dÈterminant.
\end{itemize}
Pour tout $h \leq tg$, on notera 
$$HT_{\overline \Qm_l,\overline{1_h}}(\pi_v,\Pi_t) \quad \hbox{resp. } 
HT_{\overline \Qm_l}(\pi_v,\Pi_t)$$ 
pour dÈsigner la version induite ‡ toute la strate de Newton $X_{\IC,\bar s_v,\overline{1_h}}^{=tg}$
(resp. ‡ $X_{\IC,\bar s_c}^{=tg}$).
Enfin on otera l'indice $\overline \Qm_l$, pour dÈsigner un $\overline \Zm_l$-rÈseau stable 
sous l'action des correspondances de Hecke associÈes ‡ $P_{h,d}(F_v)$.
\end{nota}

\begin{defi}
Le faisceau pervers d'Harris-Taylor associÈ ‡ $\st_t(\pi_v)$ est par dÈfinition l'extension intermÈdiaire
$$P_{\overline \Qm_l}(\pi_v,t):=\lexp p j^{=tg}_{!*} HT_{\overline \Qm_l}(\pi_v,\st_t(\pi_v)).$$
\end{defi}

\rem sur $\overline \Zm_l$, on deux notions d'extensions intermÈdiaires $\lexp p j^{=tg}_{!*}$
et $\lexp {p+} j^{=tg}_{!*}$.

\begin{nota}
Suivant \cite{h-t}, ‡ toute $\Cm$-reprÈsentation irrÈductible algÈbrique de dimension finie $\xi$ 
de $G$, on associe un systËme local $V_\xi$ sur $X_\IC$ et on dÈcore tous nos faisceaux 
d'un indice $\xi$ pour dÈsigner leur torsion par $V_\xi$:
par exemple $HT_\xi(\pi_v,\Pi_t):=HT(\pi_v,\Pi_t) \otimes V_\xi$.
\end{nota}

\section{Torsion dans la cohomologie}

\renewcommand{\theequation}{\arabic{section}.\arabic{subsection}.\arabic{smfthm}}
\renewcommand{\thesmfthm}{\arabic{section}.\arabic{subsection}.\arabic{smfthm}}

On fixe ‡ prÈsent une $\overline \Fm_l$-reprÈsentation irrÈductible supercuspidale $\varrho$ de $GL_{g_{-1}}(F_v)$ ainsi qu'une $\overline \Qm_l$-reprÈsentation irrÈductible cuspidale entiËre 
$\pi_{v,-1}$ de $\varrho$-type $-1$.
Contrairement au paragraphe suivant, on considËre ici des niveaux $I$ ramifiÈs en $v$.

\subsection{Rappels sur la $\overline \Qm_l$-cohomologie des systËmes locaux d'Harris-Taylor}
\label{para-rappel-coho}

Pour $1 \leq h=tg_{-1} \leq d$, on note $\IC_v(h)$ l'ensemble des sous-groupes compacts ouverts de la forme 
$$U_v(\underline m,h):= 
U_v(\underline m^v) \times \left ( \begin{array}{cc} I_h & 0 \\ 0 & K_v(m_1) \end{array} \right ),$$
o˘ $K_v(m_1)=\ker \bigl ( GL_{d-h}(\OC_v) \longrightarrow GL_{d-h}(\OC_v/ (\varpi_v^{m_1})) \bigr )$.

\begin{nota} On note $[H^i_{\xi,!}(tg_{-1},\pi_{v-1})]$ l'image de 
$$\lim_{\atop{\longrightarrow}{I \in \IC_v(h)}} H^i(X_{I,\bar s_v,\overline{1_h}}^{\geq h}, 
j^{\geq tg_{-1}}_{\overline{1_h},!} HT_{\overline \Qm_l,\overline{1_h},\xi}(\pi_{v,-1},\Pi_t)[d-h]) 
$$ 
dans le groupe de Grothendieck $\groth(v,h)$ des reprÈsentations admissibles de 
$G(\Am^{\oo}) \times GL_{d-h}(F_v) \times GL_h(F_v) \times \Zm$.
\end{nota}

\rem l'action de $\sigma \in W_v$ est donnÈe par celle de $-\deg \sigma \in \Zm$, 
composÈe avec celle de $\art^{-1} (\sigma)$ sur le facteur $\Qm_p^\times$ de $G(\Qm_p)$.

\begin{defi}
On dit d'une reprÈsentation irrÈductible automorphe $\Pi$ qu'elle est $\xi$-cohomologique
s'il existe un entier $i$ tel que
$$H^i \bigl ( ( \lie ~G(\Rm)) \otimes_\Rm \Cm,U,\Pi_\oo \otimes \xi^\vee \bigr ) \neq (0),$$
o˘ $U$ est un sous-groupe compact maximal modulo le centre de $G(\Rm)$.
\end{defi}

\begin{defi} Pour $\pi_v$ une reprÈsentation irrÈductible cuspidale de $GL_g(F_v)$, 
$$s_\xi(\pi_v)$$ 
dÈsignera le plus grand entier $s$ tel qu'il existe une reprÈsentation automorphe $\xi$-cohomologique
$\Pi$ telle que sa composante locale en $v$ est de la forme $\speh_{s}(\pi'_{v}) \times ?$ o˘
$\pi'_v$ est inertiellement Èquivalente ‡ $\pi_v$ et $?$ dÈsigne 
une reprÈsentation de $GL_{d-sg_{-1}}(F_v)$ que l'on ne cherche pas ‡ prÈciser.
\end{defi}

\begin{prop} (cf. \cite{boyer-compositio} \S 5 ou \cite{boyer-imj} \S 3.3) \label{prop-torsion-Ql}
Pour tout $1 \leq t \leq s_{-1}:=\lfloor \frac{d}{g_{-1}} \rfloor$, et pour tout $i$ tel que 
$|i| > s_\xi(\pi_{v,-1})-t$, alors $[H^i_{\xi,!}(tg_{-1},\pi_{v,-1})]$ est nul. Pour 
$t \leq s_\xi(\pi_{v,-1})$ et pour $i=s_\xi(\pi_{v,-1})-t$), alors $[H^i_{\xi,!}(tg_{-1},\pi_{v,-1})]$
est non nul.
\end{prop}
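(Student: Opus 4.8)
The plan is to compute the cohomology groups $H^i(X^{\geq h}_{I,\bar s_v,\overline{1_h}}, j^{\geq tg_{-1}}_{\overline{1_h},!} HT_{\overline\Qm_l,\overline{1_h},\xi}(\pi_{v,-1},\Pi_t)[d-h])$ in the limit over $I \in \IC_v(h)$ by relating them to the cohomology of Igusa varieties, and then to automorphic forms via the trace formula. First I would recall the Hochschild--Serre / excision type spectral sequence that, starting from the stratification of the Newton tower restricted to $X^{\geq h}_{I,\bar s_v,\overline{1_h}}$, expresses $j^{\geq tg_{-1}}_{\overline{1_h},!} HT$ in terms of the Harris--Taylor local systems $\FC(\pi_{v,-1},t)_{\overline{1_{tg_{-1}}}}$ on the open stratum $X^{=tg_{-1}}_{I,\bar s_v,\overline{1_h}}$; since we are taking the shriek extension $j_!$ the contribution is literally the compactly supported cohomology of the open stratum with coefficients in the local system, which by the results of \cite{h-t} (and \cite{boyer-invent2}) is computed by the cohomology of an Igusa variety of the first kind attached to the stratum of depth $tg_{-1}$.

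Next I would feed this into the Harris--Taylor / Kottwitz description of the cohomology of Igusa varieties as a sum of terms indexed by the cuspidal support: concretely, the part of the cohomology where the $GL_{g_{-1}}(F_v)$-type is inertially $\pi_{v,-1}$ is governed, via the Jacquet--Langlands transfer $\pi_{v,-1} \leadsto \pi_{v,-1}[t]_D$ and the base-change to a product of general linear groups, by automorphic representations $\Pi$ of $G$ whose local component at $v$ contains $\speh_t(\pi'_v)$ as the ``interesting'' factor with $\pi'_v$ inertially equivalent to $\pi_{v,-1}$. The Galois/Hecke weight bookkeeping (the twist by $\Xi^{\frac{tg-d}{2}}$ and the shift $[d-h]$, together with the purity of $V_\xi$) forces each such $\Pi$ to contribute only in the single degree read off from the infinitesimal character of $\xi$, and the cohomological degree at which $\Pi$ appears for the full-level local system is exactly $s_\xi(\pi_{v,-1})$; passing from $\speh_{s_\xi(\pi_{v,-1})}$ down to $\speh_t$ shifts this degree by $s_\xi(\pi_{v,-1})-t$, which accounts for the range $|i| \le s_\xi(\pi_{v,-1})-t$ and the vanishing outside it. For the non-vanishing at $i = \pm(s_\xi(\pi_{v,-1})-t)$ one uses, by definition of $s_\xi$, the existence of at least one $\xi$-cohomological $\Pi$ with $\speh_{s_\xi(\pi_{v,-1})}(\pi'_v)\times?$ in its local component at $v$; the Speh representation $\speh_{s_\xi}(\pi'_v)$ restricted along the relevant parabolic does contain $\speh_t(\pi'_v)\times\speh_{s_\xi-t}(\pi'_v)\{\ast\}$ in its Jacquet module (this is the standard Zelevinsky combinatorics of segments, using that $\pi_{v,-1}$ has $\varrho$-type $-1$ so that its reduction behaves like in characteristic zero on the relevant segments), and this guarantees a non-zero class in $\groth(v,h)$ in that degree, which does not cancel because the contributions of distinct $\Pi$'s are linearly independent in the Grothendieck group (distinct Hecke eigensystems away from $v$).

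The main obstacle I expect is controlling the \emph{non}-cancellation in the Grothendieck group and pinning down the precise degree: the spectral sequence relating $j_!$ on the partial Newton stratum to Igusa cohomology has many terms (one for each $h' \ge tg_{-1}$), and a priori contributions in adjacent degrees could cancel. The key technical point to get right is therefore the purity/weight argument — each $\Pi$ contributes a \emph{pure} piece concentrated in one degree, so the spectral sequence degenerates on the $\Pi$-isotypic part and there is no room for cancellation — together with a careful check that the ``middle degree'' for the full local system is $s_\xi(\pi_{v,-1})$ rather than something depending on $t$; this is exactly where the hypothesis that $\xi$ is (possibly) ``not too regular'' enters, since for a regular enough $\xi$ one would have $s_\xi(\pi_{v,-1}) = 0$ and the statement would be empty. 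I would also need to be careful that taking the limit over $I \in \IC_v(h)$, with level structure shrinking only in the $GL_{d-h}(F_v)$-direction, is compatible with the admissibility statement in $\groth(v,h)$ and does not introduce spurious infinite-dimensional contributions; this is handled by the finiteness results of \cite{boyer-invent2}.
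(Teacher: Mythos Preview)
The paper does not give a proof of this proposition: it is quoted from \cite{boyer-compositio} \S 5 and \cite{boyer-imj} \S 3.3, so there is no in-paper argument to compare against. Your outline --- reduce to the compactly supported cohomology of the open Newton stratum with coefficients in the Harris--Taylor local system, then use the automorphic description (Igusa varieties, trace formula, Jacquet--Langlands) to see that only $\xi$-cohomological $\Pi$ with $\Pi_v \simeq \speh_s(\pi'_v)\times ?$, $\pi'_v$ inertially equivalent to $\pi_{v,-1}$, contribute, in degrees bounded by $|i|\le s-t$ --- is indeed the mechanism behind the cited computations, and the non-vanishing at $i=s_\xi(\pi_{v,-1})-t$ comes exactly from the existence of a $\Pi$ with maximal Speh parameter $s=s_\xi(\pi_{v,-1})$, together with the separation of Hecke eigensystems that you invoke.

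A few points in your sketch are off and should be removed or corrected. The appeal to ``$\pi_{v,-1}$ has $\varrho$-type $-1$ so that its reduction behaves like in characteristic zero'' is irrelevant: the proposition is a statement over $\overline\Qm_l$ and involves no reduction modulo $l$; the $\varrho$-type only matters for the later integral and mod-$l$ statements (Corollary \ref{coro-torsion1} onward). Likewise, the claim that ``this is exactly where the hypothesis that $\xi$ is not too regular enters'' is wrong: the proposition is valid for every $\xi$; regularity only controls the \emph{value} of $s_\xi(\pi_{v,-1})$ (it may be zero, making the non-vanishing clause vacuous), not the validity of the argument. Finally, be careful with the claim that each $\Pi$ contributes ``in a single degree'': for the extension by zero $j^{=tg_{-1}}_!$ a fixed $\Pi$ with parameter $s$ typically contributes across the whole range $|i|\le s-t$ (this is visible from the decomposition (\ref{egalite-hij}) into intermediate extensions supported on deeper strata); what pins down the extremal degree is not purity of a single $\Pi$-piece but the fact that only the $k=0$ term of that decomposition can reach $|i|=s_\xi(\pi_{v,-1})-t$, and there the contribution of a $\Pi$ with maximal $s$ is visibly nonzero and cannot be cancelled by other Hecke eigensystems. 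The ``Hochschild--Serre/excision spectral sequence'' you mention at the start is also unnecessary: since you are computing cohomology of a $j_!$-extension, this is literally $H^*_c$ of the open stratum and no stratification spectral sequence intervenes at that step.
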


\subsection{Torsion dans la cohomologie des faisceaux pervers d'Harris-Taylor}
\label{para-torsion-coho}

Dans cette section nous allons rappeler comment, d'aprËs le \S 4.5 de \cite{boyer-aif}, on peut
construire de la torsion dans la cohomologie d'un faisceau pervers d'Harris-Taylor. 

\begin{nota} On note $\Fm(-)$ le foncteur $(-) \otimes^\Lm_{\overline \Zm_l} \overline \Fm_l$.
\end{nota} 

\begin{prop} \label{prop-pp} (cf. \cite{boyer-entier})
Pour tout $1 \leq t \leq s_{-1}:=\lfloor \frac{d}{g_{-1}} \rfloor$, on a
$$\lexp p j^{=tg}_{!*} HT_{\overline \Zm_l,\xi}(\pi_{v,-1},\Pi_t) \simeq \lexp {p+} j^{=tg}_{!*} 
HT_{\overline \Zm_l,\xi}(\pi_{v,-1},\Pi_t).$$
\end{prop}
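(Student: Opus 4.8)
Il s'agit de la proposition \ref{prop-pp} : pour tout $1 \leq t \leq s_{-1}$, les deux extensions intermédiaires $\lexp p j^{=tg}_{!*}$ et $\lexp {p+} j^{=tg}_{!*}$ du faisceau de Hecke $HT_{\overline \Zm_l,\xi}(\pi_{v,-1},\Pi_t)$ coïncident. Rappelons que sur $\overline \Zm_l$, la catégorie des faisceaux pervers admet deux $t$-structures perverses (la $p$ et la $p+$, duales l'une de l'autre), et donc deux foncteurs d'extension intermédiaire ; en général $\lexp p j_{!*} \to \lexp {p+} j_{!*}$ et ces deux faisceaux diffèrent par de la torsion supportée sur le bord. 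L'égalité revient donc à montrer l'absence de torsion dans cette différence, ce qui est une propriété de \emph{liberté} (torsion-freeness) du faisceau pervers.

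**Plan de preuve.** Le point de départ est le critère standard : $\lexp p j_{!*}F = \lexp {p+} j_{!*}F$ si et seulement si le faisceau pervers $\lexp p j_{!*}F$ (ou son dual) n'a pas de torsion, i.e. est un faisceau pervers \emph{libre} au sens où $\Fm(\lexp p j_{!*}F)$ est encore pervers (pas seulement dans $\lexp pD^{\geq 0}$). Le premier pas est donc de se ramener à la strate ouverte : comme $j^{=tg}$ est une immersion affine (les strates de Newton $X^{=h}$ sont affines, cf. le rappel après la notation des strates), $\lexp p j_! = \lexp p j_*$ à décalage près et l'extension intermédiaire s'obtient par les troncations perverses usuelles. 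Il suffit alors de contrôler, strate de Newton par strate de Newton $X^{=h'}$ avec $h' > tg$, les faisceaux de cohomologie $i_{h'}^* \lexp p j_{!*}F$ et de vérifier qu'ils sont sans $l$-torsion et concentrés dans le bon intervalle de degrés. C'est ici qu'intervient de façon cruciale l'hypothèse que $\pi_{v,-1}$ est de $\varrho$-type $-1$ : sa réduction modulo $l$ est \emph{supercuspidale} (et non seulement cuspidale), ce qui est exactement la condition manquante dans les situations où la torsion apparaît (cf. la remarque de l'introduction sur \cite{boyer-aif}).

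**Étapes détaillées.** Je procéderais ainsi. (i) Décrire les faisceaux de cohomologie de $\lexp p j^{=tg}_{!*} HT_{\overline \Zm_l,\xi}(\pi_{v,-1},\Pi_t)$ restreints aux strates plus profondes, via les résultats de \cite{boyer-invent2} sur la géométrie des variétés d'Igusa et la description des cycles évanescents : ces restrictions s'expriment à l'aide de complexes de cohomologie de systèmes locaux d'Harris-Taylor $HT(\pi_{v,-1},?)$ sur des strates $X^{=h'}$, tordus par des représentations explicites. (ii) Invoquer le fait que, la réduction modulo $l$ de $\pi_{v,-1}$ étant supercuspidale, les $\overline \Zm_l$-systèmes locaux $\FC(\pi_{v,-1},t)$ sont, à torsion près, \emph{sans torsion} et leurs réductions modulo $l$ restent irréductibles — autrement dit le réseau stable est essentiellement unique et se comporte bien par $\otimes^\Lm \overline \Fm_l$. (iii) En déduire, via le calcul de la cohomologie à support dans les strates fermées (et le fait que pour le $\varrho$-type $-1$ on est dans le cas où $s_\xi(\pi_{v,-1}) = s_{-1}$, de sorte que la proposition \ref{prop-torsion-Ql} donne la concentration cohomologique voulue sur $\overline \Qm_l$), que les faisceaux $\Fm(i_{h'}^* \lexp p j_{!*}F)$ vérifient la condition de support pervers stricte requise, donc que $\Fm(\lexp p j_{!*}F)$ est pervers. (iv) Conclure par le critère rappelé au début : la $\overline \Zm_l$-perversité de $\Fm(\lexp p j_{!*}F)$ force $\lexp p j_{!*}F = \lexp {p+} j_{!*}F$.

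**Principal obstacle.** Le c\oe ur de la difficulté est l'étape (iii) : montrer que la restriction aux strates fermées de l'extension intermédiaire n'a pas de torsion. Sur $\overline \Qm_l$ c'est un énoncé de semi-simplicité/pureté ; sur $\overline \Zm_l$ il faut un argument entier, typiquement en comparant avec la cohomologie de la variété d'Igusa $\IC_v$ associée et en utilisant que, pour un système local d'Harris-Taylor attaché à une cuspidale dont la réduction est supercuspidale, cette cohomologie entière est elle-même sans torsion dans les degrés pertinents (c'est précisément le contraste avec le cas $\varrho$-type $\geq 0$ où la torsion surgit). Techniquement, cela repose sur le contrôle des $\overline \Zm_l$-réseaux dans les induites paraboliques et l'absence de congruences \emph{parasites} entre les constituants de $\speh_t$-type, ce qui est garanti par l'hypothèse de supercuspidalité de $r_l(\pi_{v,-1})$ ; je m'appuierais ici directement sur les résultats de \cite{boyer-entier} cités, dont la proposition est une conséquence formelle une fois la géométrie et la combinatoire des réseaux mises en place.
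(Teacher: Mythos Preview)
The paper does not give a proof of this proposition at all: it is simply quoted from \cite{boyer-entier}. So there is no ``paper's own proof'' to compare against; your proposal is an attempt to reconstruct the argument of that reference.

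Your key insight is correct: the crucial hypothesis is that $\pi_{v,-1}$ is of $\varrho$-type $-1$, i.e.\ its reduction modulo $l$ is \emph{supercuspidal}, and this is precisely what prevents the torsion that would otherwise appear between $\lexp p j_{!*}$ and $\lexp{p+} j_{!*}$. The overall shape --- reduce to showing that $\Fm(\lexp p j^{=tg}_{!*}HT)$ is still perverse by controlling the restrictions to deeper Newton strata --- is also right.

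However, two points in your sketch are off. First, the claim that for an affine immersion ``$\lexp p j_! = \lexp p j_*$ \`a d\'ecalage pr\`es'' is false: affinity of $j$ gives $t$-exactness of $j_!$ and $j_*$ (so both preserve perversity), not any identification between them. Second, and more seriously, step~(iii) confuses a sheaf-level statement with a global-cohomological one. The proposition is purely about the perverse sheaf $\lexp p j^{=tg}_{!*}HT$ and its restriction to strata $X^{=h'}$ with $h'>tg$; proposition~\ref{prop-torsion-Ql}, which concerns the groups $H^i(X_{\IC,\bar s_v},-)$, is irrelevant here, and the parenthetical claim that ``$s_\xi(\pi_{v,-1})=s_{-1}$'' is neither true in general nor needed. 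What is actually required is a sheaf-theoretic computation: one describes $i_{h'}^*\lexp p j^{=tg}_{!*}HT_{\overline\Zm_l}(\pi_{v,-1},\Pi_t)$ explicitly (this comes out of the geometry of Igusa varieties and the Lubin--Tate tower, as in \cite{boyer-invent2}) and checks that, because $r_l(\pi_{v,-1})$ is supercuspidal, the resulting $\overline\Zm_l$-local systems on each stratum are lattices whose reductions remain of the expected shape, so that the support conditions for both $p$ and $p+$ perversities coincide. Finally, ending by saying you would ``rely directly on the results of \cite{boyer-entier}'' is circular, since the proposition \emph{is} the cited result.
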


On note $HT_{\overline \Fm_l,\xi}(\pi_{v,-1},\Pi_t)$, la rÈduction modulo $l$ d'un  
$\overline \Zm_l$-rÈseau stable de $HT_{\overline \Zm_l,\xi}(\pi_{v,-1},\Pi_t)$.

\begin{coro} \label{coro-torsion1}
Pour $t \leq s_\xi(\pi_{v,-1})$,
$$H^i(X_{\IC,\bar s_v,\overline{1_h}}^{\geq h}, j^{\geq tg_{-1}}_{\overline{1_h},!}  
HT_{\overline \Fm_l,\xi}(\pi_{v,-1},\Pi_t))$$
est nul pour tout $i>s_{\xi}(\pi_{v,-1})-t$ et non nul pour $i=s_{\xi}(\pi_{v,-1})-t$.
Pour  $t > s_\xi(\pi_{v,-1})$, tous ces groupes de cohomologie sont nuls.
\end{coro}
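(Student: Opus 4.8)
The plan is to deduce Corollary \ref{coro-torsion1} from Proposition \ref{prop-torsion-Ql} by reducing modulo $l$ and using Proposition \ref{prop-pp}. First I would set up the long exact sequence of cohomology attached to the short exact sequence of $\overline\Zm_l$-sheaves $0 \to L \xrightarrow{l} L \to \Fm(L) \to 0$ applied to a stable $\overline\Zm_l$-lattice $L$ in $j^{\geq tg_{-1}}_{\overline{1_h},!} HT_{\overline\Zm_l,\xi}(\pi_{v,-1},\Pi_t)[d-h]$; passing to the limit over $I \in \IC_v(h)$ (which is exact), this gives for each $i$ an exact sequence relating $H^i_{\xi,!}(tg_{-1},\pi_{v,-1})/l$, $H^i$ of the $\overline\Fm_l$-sheaf, and the $l$-torsion of $H^{i+1}_{\xi,!}(tg_{-1},\pi_{v,-1})$. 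The point of Proposition \ref{prop-torsion-Ql} is that, after inverting $l$, these cohomology groups vanish in degrees $|i| > s_\xi(\pi_{v,-1})-t$ and are nonzero in degree $i = s_\xi(\pi_{v,-1})-t$.

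Second, for the vanishing statement I would argue by descending induction on $i$ starting from the top degree: when $i > s_\xi(\pi_{v,-1})-t$, the $\overline\Qm_l$-cohomology vanishes so $H^i_{\xi,!}(tg_{-1},\pi_{v,-1})$ is a torsion $\overline\Zm_l$-module; combined with the vanishing one degree up, the long exact sequence forces $H^i$ of the $\overline\Fm_l$-sheaf to be squeezed between the cokernel of multiplication by $l$ on a torsion module and the $l$-torsion of the next group, and a standard finiteness/boundedness argument (the cohomology is a finite-type module in each finite level, and bounded in degree) propagates the vanishing downwards to $i = s_\xi(\pi_{v,-1})-t+1$. The role of Proposition \ref{prop-pp} is essential here: the identification $\lexp p j^{=tg}_{!*} = \lexp{p+} j^{=tg}_{!*}$ guarantees that the relevant lattice is \emph{torsion-free} as a perverse sheaf, so that $\Fm$ of the intermediate extension behaves well and no spurious torsion is introduced by the reduction — this is what lets us control $H^i$ of the $\overline\Fm_l$-sheaf purely in terms of the $\overline\Qm_l$-cohomology.

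Third, for the nonvanishing at $i = s_\xi(\pi_{v,-1})-t$ (when $t \leq s_\xi(\pi_{v,-1})$): by the vanishing just established, $H^{i+1}$ of the $\overline\Fm_l$-sheaf is zero, so the long exact sequence gives a surjection $H^i(\overline\Fm_l\text{-sheaf}) \twoheadrightarrow H^{i+1}_{\xi,!}(tg_{-1},\pi_{v,-1})[l]$ and an injection $H^i_{\xi,!}(tg_{-1},\pi_{v,-1})/l \hookrightarrow H^i(\overline\Fm_l\text{-sheaf})$. Since Proposition \ref{prop-torsion-Ql} tells us $H^i_{\xi,!}(tg_{-1},\pi_{v,-1})$ is nonzero, and since it is an admissible $\overline\Zm_l[G]$-module (finite type at each level), Nakayama's lemma gives that $H^i_{\xi,!}(tg_{-1},\pi_{v,-1})/l \neq 0$, hence $H^i$ of the $\overline\Fm_l$-sheaf is nonzero. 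The case $t > s_\xi(\pi_{v,-1})$ is the easiest: then the $\overline\Qm_l$-cohomology vanishes in all degrees, all the $H^i_{\xi,!}$ are torsion and in fact one shows them to be zero (or the statement of \ref{prop-torsion-Ql} is read as giving full vanishing), and the long exact sequence collapses to give vanishing of every $H^i$ of the $\overline\Fm_l$-sheaf.

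The main obstacle I anticipate is controlling the $l$-torsion subgroups $H^{i}_{\xi,!}(tg_{-1},\pi_{v,-1})[l]$ that appear as error terms in the long exact sequence: Proposition \ref{prop-torsion-Ql} only records the $\overline\Qm_l$-picture (equivalently, the behaviour after $\otimes \overline\Qm_l$), so it does not directly say whether these integral cohomology groups are torsion-free. Resolving this is precisely where one must invoke the torsion-freeness of the filtered pieces coming from Proposition \ref{prop-pp} together with the affineness of the open Newton strata (so that $j_!$ is right $t$-exact for the perverse $t$-structure and the cohomology is concentrated in a controlled range of degrees), which forces the top nonzero integral cohomology group to inject into the $\overline\Fm_l$-cohomology and bounds everything. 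Making this bookkeeping precise — keeping track of which degrees carry torsion and ensuring the inductive step never loses information — is the technical heart of the argument; everything else is a formal consequence of the long exact sequence and Nakayama.
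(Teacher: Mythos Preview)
Your plan correctly identifies the short exact sequence relating $H^i(\overline\Zm_l)/l$, $H^i(\overline\Fm_l)$ and $H^{i+1}(\overline\Zm_l)[l]$ as the bridge to the $\overline\Fm_l$-statement, and you correctly name the obstruction: one must show that the \emph{integral} cohomology of $j^{\geq tg_{-1}}_{\overline{1_h},!}HT_{\overline\Zm_l,\xi}(\pi_{v,-1},\Pi_t)$ vanishes for $i>s_\xi(\pi_{v,-1})-t$, not just its rationalization. But your descending induction on $i$ does not close. Granting $H^{j}(\overline\Fm_l)=0$ for all $j>i$ yields $H^{j}(\overline\Zm_l)/l=0$ for $j\geq i+1$ and $H^{j}(\overline\Zm_l)[l]=0$ for $j\geq i+2$; combined with rational vanishing this kills $H^j(\overline\Zm_l)$ for $j\geq i+2$, but $H^{i+1}(\overline\Zm_l)$ is left merely torsion and $l$-divisible, which does not force it to vanish, and its $l$-torsion then feeds into $H^i(\overline\Fm_l)$. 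Passing to a fixed finite level does not help: there the vanishing of $H^i_I(\overline\Fm_l)$ is, by the same sequence and Nakayama, \emph{equivalent} to the integral vanishing you are trying to establish. Your appeal to Proposition~\ref{prop-pp} is also off target as stated: that result concerns $\lexp p j^{=tg}_{!*}$, not $j^{=tg_{-1}}_{!}$, so by itself it says nothing about torsion in the cohomology of the $!$-extension.

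The paper supplies the missing mechanism by a structurally different route. It filters $j^{=tg_{-1}}_{!}HT_{\overline\Zm_l}$ by the stratification filtration of \cite{boyer-torsion}, whose gradeds are, by \cite{boyer-duke}, the intermediate extensions $\lexp p j^{=(t+k)g_{-1}}_{!*}HT_{\overline\Zm_l}$, and runs the associated spectral sequence (\ref{eq-ss-filt1}). The heart is Lemma~\ref{lem-coho1}, proved by \emph{descending induction on $t$} rather than on the cohomological degree: for $t=s_{-1}$ one has $j_!=j_*$ on the deepest stratum, and affineness concentrates the cohomology in degree~$0$ where it is torsion-free; the inductive step then reads off the vanishing and torsion-freeness of $E_1^{0,q}$ from the spectral sequence and the hypothesis for larger $t$. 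Proposition~\ref{prop-pp} enters only after this lemma, and not to control reduction modulo $l$: it gives self-duality of the gradeds, converting the negative-degree vanishing of Lemma~\ref{lem-coho1} into positive-degree integral vanishing of the $E_1$-terms and hence of the abutment. Once that integral vanishing is in hand, the $\overline\Fm_l$-statement is immediate from your short exact sequence. What your outline lacks is this filtration and the induction on $t$; without them there is no way to exclude torsion in the integral cohomology in the range $i>s_\xi(\pi_{v,-1})-t$.
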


\rem autrement dit la conclusion de la proposition \ref{prop-torsion-Ql} est encore valable 
sur $\overline \Fm_l$.

\begin{proof}
Rappelons l'ÈgalitÈ suivante sur $\overline \Qm_l$ et dans le groupe de Grothendieck des
faisceaux pervers Èquivariant de Hecke, cf. \cite{boyer-invent2} corollaire 5.4.1
\addtocounter{smfthm}{1}
\begin{multline} \label{egalite-hij}
j^{= tg_{-1}}_{\overline{1_h},!} HT_{\overline \Qm_l,\xi,\overline{1_h}}(\pi_{v,-1},\Pi_t)= 
\lexp p j^{= tg_{-1}}_{\overline{1_h},!*} HT_{\overline \Qm_l,\xi,\overline{1_h}}(\pi_{v,-1},\Pi_t) + \\
\sum_{i=1}^{s_{-1}-t} \lexp p j^{=(t+i)g_{-1}}_{\overline{1_h},!*} HT_{\overline \Qm_l,\xi,\overline{1_h}}
(\pi_{v,-1},\Pi_t \{ i \frac{g-1}{2} \} \times \st_i(\pi_{v,-1} \{-t \frac{g-1}{2}\} )) \otimes \Xi^{\frac{i}{2}}
\end{multline}
La filtration de stratification construite dans \cite{boyer-torsion} 
$$0=\Fil^{t-s_{-1}-1}_!(\pi_{v,-1},\Pi_t) \subset \Fil^{t-s}_!(\pi_{v,-1},\Pi_t) \subset \cdots 
\Fil^{0}_!(\pi_{v,-1},\Pi_t)$$
a pour graduÈs, d'aprËs \cite{boyer-duke}, les
$$\gr^i_!(\pi_{v,-1},\Pi_t) \simeq \lexp p  j^{=(t+i)g_{-1}}_{\overline{1_h},!*} 
HT_{\overline \Zm_l,\xi,\overline{1_h}}(\pi_{v,-1},\Pi_t \{ i \frac{g-1}{2} \} 
\times \st_i(\pi_{v,-1} \{-t \frac{g-1}{2}\} )) \otimes \Xi^{\frac{i}{2}},$$
pour certains rÈseaux stables qu'il est inutile de prÈciser.
%
On considËre alors la suite spectrale associÈe ‡ cette filtration et calculant les groupes de cohomologie
de $j^{= tg_{-1}}_{\overline{1_h},!} HT_{\overline \Zm_l,\xi,\overline{1_h}}(\pi_{v,-1},\Pi_t)$
\addtocounter{smfthm}{1}
\begin{equation} \label{eq-ss-filt1}
E_1^{p,q}=H^{p+q}(X_{\IC,\bar s_v}, \gr^{-p}_!(\pi_{v,-1},\Pi_t)) \Rightarrow H^{p+q}(X_{\IC,\bar s_v},
j^{= tg_{-1}}_{\overline{1_h},!} HT_{\overline \Zm_l,\xi,\overline{1_h}}(\pi_{v,-1},\Pi_t)).
\end{equation}

\begin{lemm} \label{lem-coho1}
Pour tout 
$i < t-s_{\xi}(\pi_{v,-1})$ (resp. $i=t-s_{\xi}(\pi_{v,-1})$) le groupe de cohomologie 
$H^i(X_{\IC,\bar s_v},\lexp p j^{=tg_{-1}}_{\overline{1_h},!*} HT_{\overline \Zm_l,\xi,\overline{1_h}}
(\pi_{v,-1},\Pi_t))$ est nul (resp. non nul et sans torsion).
\end{lemm}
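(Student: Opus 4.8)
The statement to prove is Lemma \ref{lem-coho1}: the cohomology $H^i(X_{\IC,\bar s_v},\lexp p j^{=tg_{-1}}_{\overline{1_h},!*} HT_{\overline \Zm_l,\xi,\overline{1_h}}(\pi_{v,-1},\Pi_t))$ vanishes for $i < t - s_\xi(\pi_{v,-1})$ and is non-zero and torsion-free for $i = t - s_\xi(\pi_{v,-1})$. Let me sketch a proof strategy.

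The plan is to deduce this from the $\overline\Qm_l$-statement of Proposition \ref{prop-torsion-Ql} together with the fact (Proposition \ref{prop-pp}) that the two intermediate extensions $\lexp p j_{!*}$ and $\lexp{p+} j_{!*}$ agree over $\overline\Zm_l$ for these Harris–Taylor perverse sheaves. First I would recall that by Proposition \ref{prop-pp} the $\overline\Zm_l$-lattice $P:=\lexp p j^{=tg_{-1}}_{\overline{1_h},!*} HT_{\overline \Zm_l,\xi,\overline{1_h}}(\pi_{v,-1},\Pi_t)$ is a free perverse sheaf in the sense that $\Fm(P)=P\otimes^\Lm_{\overline\Zm_l}\overline\Fm_l$ is perverse (concentrated in a single perverse degree), since $\lexp p j_{!*}=\lexp{p+}j_{!*}$ is exactly the condition guaranteeing this. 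Consequently the universal coefficient short exact sequence gives, for each $i$, a short exact sequence
$$0 \to \Fm\bigl(H^{i-1}(X_{\IC,\bar s_v},P)\bigr)_{\text{quotient part}} \to H^i(X_{\IC,\bar s_v},\Fm(P)) \to H^i(X_{\IC,\bar s_v},P)[l] \to 0,$$
more precisely the standard exact sequence relating $H^i(-,P)$, $H^i(-,P)\otimes\overline\Fm_l$, $H^{i+1}(-,P)[l]$ and $H^i(-,\Fm(P))$.

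Next I would use Proposition \ref{prop-torsion-Ql}: over $\overline\Qm_l$ the cohomology $[H^i_{\xi,!}(tg_{-1},\pi_{v,-1})]$ vanishes for $|i|>s_\xi(\pi_{v,-1})-t$ and is non-zero for $i=s_\xi(\pi_{v,-1})-t$. Combined with the decomposition \eqref{egalite-hij} expressing $j^{=tg_{-1}}_{\overline{1_h},!}HT$ in the Grothendieck group as $\lexp p j_{!*}HT$ plus a sum of intermediate extensions supported on deeper strata $X^{=(t+i)g_{-1}}$ with $i\geq 1$, and the corresponding filtration with graded pieces $\gr^i_!$, one gets control of the cohomology of each $\lexp p j_{!*}$ piece by descending induction on the codimension of the support, or more directly: the cohomology of the generic-stratum intermediate extension $\lexp p j^{=tg_{-1}}_{\overline{1_h},!*}HT$ over $\overline\Qm_l$ is, by the same vanishing theorems (concentration of cohomology of HT-systems / Proposition \ref{prop-torsion-Ql} applied in its sharp form), concentrated in degrees $i$ with $t-s_\xi(\pi_{v,-1}) \le i$, with the extreme degree $i=t-s_\xi(\pi_{v,-1})$ non-zero. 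Here I use that the $\overline\Qm_l$-cohomology of $\lexp p j_{!*}HT$ is a direct summand (via the decomposition theorem for the proper map to a point, or the purity of the intermediate extension) of the cohomology of the full $j_!HT$, reducing the bound to Proposition \ref{prop-torsion-Ql}.

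The main obstacle — and the real content — is the torsion-freeness at the bottom degree $i=t-s_\xi(\pi_{v,-1})$, and the vanishing strictly below it over $\overline\Zm_l$ rather than just over $\overline\Qm_l$. For vanishing below, if $H^i(X_{\IC,\bar s_v},P)$ had torsion for some $i<t-s_\xi(\pi_{v,-1})$, then by the universal coefficient sequence $H^i(X_{\IC,\bar s_v},\Fm(P))$ would be non-zero; but $\Fm(P)$ is perverse and one can bound the amplitude of its cohomology using the affineness of the Newton strata (the stratum $X^{=h}_{I,\bar s_v}$ is affine of dimension $d-h$, cf. the remarks recalled in the excerpt) together with Artin–Grothendieck vanishing for perverse sheaves on affine schemes — this forces $H^i(-,\Fm(P))=0$ for $i$ outside the relevant window, hence the torsion cannot occur there. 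For torsion-freeness at the extreme degree, I would argue that $H^{t-s_\xi(\pi_{v,-1})}(X_{\IC,\bar s_v},\Fm(P))$ injects into the cohomology of $\Fm(j_!HT)$, whose dimension over $\overline\Fm_l$ matches (by Corollary \ref{coro-torsion1}'s input / the Brauer–Nesbitt compatibility of reduction mod $l$) the $\overline\Qm_l$-dimension of $H^{t-s_\xi}(X_{\IC,\bar s_v},j_!HT\otimes\overline\Qm_l)$, so no extra torsion classes can appear — equivalently, the rank of $H^{t-s_\xi}(-,P)$ over $\overline\Zm_l$ already accounts for the whole of $H^{t-s_\xi}(-,\Fm(P))$, leaving no room for an $l$-torsion submodule in $H^{t-s_\xi+1}(-,P)$ to contribute. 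The delicate point is tracking the Hecke-equivariant bookkeeping through \eqref{egalite-hij} to be sure the "extra" Steinberg-twisted terms $\st_i(\pi_{v,-1}\{\cdots\})$ do not interfere in the bottom degree, which they do not because each such term is supported on a strictly smaller stratum and its cohomology, by the same concentration statement applied with parameter $t+i>t$, starts in degree $\geq (t+i)-s_\xi(\pi_{v,-1}) > t-s_\xi(\pi_{v,-1})$.
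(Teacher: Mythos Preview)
Your proposal contains a genuine gap in the treatment of the integral (torsion) part. You correctly observe that over $\overline\Qm_l$ the decomposition theorem makes $H^i(\lexp p j_{!*}HT)$ a direct summand of $H^i(j_!HT)$, so Proposition~\ref{prop-torsion-Ql} gives the $\overline\Qm_l$-vanishing for $i<t-s_\xi(\pi_{v,-1})$. But your argument for excluding torsion in this range does not go through: you write that ``$\Fm(P)$ is perverse and one can bound the amplitude of its cohomology using the affineness of the Newton strata together with Artin--Grothendieck vanishing'', yet Artin vanishing on an affine stratum only yields bounds of the shape $H^i(j_!\,\cdot)=0$ for $i<0$ (and dually for $j_*$). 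It gives no control on $H^i$ of an \emph{intermediate extension} in the range $t-s_\xi(\pi_{v,-1})\le i<0$, which is exactly where the content lies when $t<s_\xi(\pi_{v,-1})$. The bound $t-s_\xi(\pi_{v,-1})$ is an automorphic one, not a geometric amplitude bound, and cannot be extracted from stratification and affineness alone. Relatedly, your appeal to Corollary~\ref{coro-torsion1} is circular: in the paper that corollary is proved \emph{using} the present lemma.

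The paper's argument, which you allude to with ``descending induction on the codimension of the support'' but then set aside in favour of the ``more direct'' route, is what actually makes this work. One argues by descending induction on $t$ from $s_{-1}$ down to $1$. For $t=s_{-1}$ one has $j_!=\lexp p j_{!*}=\lexp{p+}j_{!*}=j_*$, so affineness forces concentration in degree $0$; the same holds over $\overline\Fm_l$, whence torsion-freeness by the universal coefficient sequence, and Proposition~\ref{prop-torsion-Ql} gives the (non-)vanishing. For general $t$ one uses the spectral sequence \eqref{eq-ss-filt1}: the induction hypothesis controls the terms $E_1^{p,q}$ coming from the deeper graded pieces $\gr^{-p}_!$ with $-p\ge 1$ (vanishing for $p+q<(t-p)-s_\xi$, torsion-free at the boundary), while affineness of $X^{=tg_{-1}}_{I,\bar s_v,\overline{1_h}}$ forces $E_\infty^{p+q}=0$ for $p+q<0$; comparing, one reads off that $E_1^{0,q}=H^q(\lexp p j_{!*}HT)$ vanishes for $q<t-s_\xi$ and is torsion-free at $q=t-s_\xi$. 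This inductive bookkeeping is the missing ingredient in your sketch.
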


\begin{proof}
On raisonne par rÈcurrence sur $t$ de $s_{-1}$ ‡ $1$. Pour $t=s_{-1}$, on rappelle que
\begin{multline*}
\lexp p j^{=s_{-1}g_{-1}}_{\overline{1_h},!*} HT_{\overline \Zm_l,\xi,\overline{1_h}}
(\pi_{v,-1},\Pi_{s_{-1}}) \simeq  j^{=s_{-1}g_{-1}}_{\overline{1_h},!} HT_{\overline \Zm_l,\xi,\overline{1_h}}
(\pi_{v,-1},\Pi_{s_{-1}}) \\
\simeq  j^{=s_{-1}g_{-1}}_{\overline{1_h},*} HT_{\overline \Zm_l,\xi,\overline{1_h}}(\pi_{v,-1},\Pi_{s_{-1}}) 
\simeq \lexp {p+}  j^{=s_{-1}g_{-1}}_{\overline{1_h},!*} HT_{\overline \Zm_l,\xi,\overline{1_h}}
(\pi_{v,-1},\Pi_{s_{-1}})
\end{multline*}
de sorte que, considÈrant que $X^{=h}_{I,\bar s_v,\overline{1_h}}$ est affine, les
$H^i(X_{\IC,\bar s_v,\overline{1_h}}, j^{=s_{-1}g_{-1}}_{\overline{1_h},!} 
HT_{\overline \Zm_l,\xi,\overline{1_h}} (\pi_{v,-1},\Pi_{s_{-1}})$ (resp.
$H^i(X_{\IC,\bar s_v,\overline{1_h}}, j^{=s_{-1}g_{-1}}_{\overline{1_h},*} 
HT_{\overline \Zm_l,\xi,\overline{1_h}}(\pi_{v,-1},\Pi_{s_{-1}})$) son nuls pour $i<0$ 
(resp. $i>0$), et donc concentrÈs en degrÈ mÈdian.
En utilisant en outre que $j^{\geq h}_{\overline{1_h}}$ est affine, de sorte que $j^{=h}_{\overline{1_h},!}$ et 
$j^{=h}_{\overline{1_h},*}$ commutent avec le 
foncteur de rÈduction modulo $l$, on en dÈduit qu'il en est de mÍme pour
$H^i(X_{\IC,\bar s_v,\overline{1_h}}, j^{=s_{-1}g_{-1}}_{\overline{1_h},!} 
HT_{\overline \Fm_l,\xi,\overline{1_h}}(\pi_{v,-1},\Pi_{s_{-1}})$ ce qui impose que
$H^0(X_{\IC,\bar s_v,\overline{1_h}}, j^{=s_{-1}g_{-1}}_{\overline{1_h},!} 
HT_{\overline \Zm_l,\xi,\overline{1_h}}(\pi_{v,-1},\Pi_{s_{-1}})$ est sans torsion, cf. la suite
exacte courte gÈnÈrale (\ref{eq-sec-torsion0}) rappelÈe plus bas.
Le rÈsultat dÈcoule alors de la proposition \ref{prop-torsion-Ql}.

Supposons alors le rÈsultat acquis jusqu'au rang $t+1$ et revenons ‡ l'Ètude de la suite spectrale
(\ref{eq-ss-filt1}). 
\begin{itemize}
\item ConsidÈrons tout d'abord le cas o˘ $t> s_{\xi}(\pi_{v,-1})$ auquel cas, par
rÈcurrence tous les $E_1^{p,q}$ avec $p>0$ sont nuls. Or $X^{=tg_{-1}}_{I,\bar s_v,\overline{1_h}}$
Ètant affine, les $E_\oo^{p+q}$ sont nuls pour $p+q<0$, ce qui impose que les 
$H^i(X_{\IC,\bar s_v,\overline{1_h}},\lexp p j^{=tg_{-1}}_{\overline{1_h},!*} 
HT_{\overline \Zm_l,\xi,\overline{1_h}}(\pi_{v,-1},\Pi_t))$ sont nuls pour 
$i<0$ et sans torsion pour $i=0$. Or d'aprËs la proposition \ref{prop-torsion-Ql}, on en dÈduit
qu'il est nul aussi pour $i=0$ et par dualitÈ, d'aprËs la proposition \ref{prop-pp}, nul pour tout $i$.

\item Supposons ‡ prÈsent $t \leq s_{\xi}(\pi_{v,-1})$.
D'aprËs l'hypothËse de rÈcurrence, pour $p>0$ fixÈ, les $E_1^{p,q}$ sont nuls pour
$p+q<t+p-s_{\xi}(\pi_{v,-1})$ et sans torsion pour $p+q=t+p-s_{\xi}(\pi_{v,-1})$. Par ailleurs comme
$X^{=tg_{-1}}_{I,\bar s_v,\overline{1_h}}$ est affine, les $E_\oo^{p+q}$ sont nuls pour $p+q<0$, 
de sorte que les
$E_1^{0,q}$ sont nuls pour $q<t-s_{\xi}(\pi_{v,-1})$ et $E_1^{0,t-s_{\xi}(\pi_{v,-1})}$ est sans torsion,
d'o˘ le rÈsultat. 
\end{itemize}
\end{proof}

Ainsi en utilisant que, d'aprËs la proposition \ref{prop-pp}, les $\gr_!^{-p}(\pi_v,\Pi_t)$ sont autoduaux 
en tant que faisceaux pervers, on en dÈduit que, pour $1 \leq p \leq s_{-1}-t$ fixÈ, les termes $E_1^{p,q}$ 
de la suite spectrale \ref{eq-ss-filt1} sont nuls pour  $p+q\geq s_{\xi}(\pi_{v,-1})-t-p$ et 
donc $E_\oo^{p+q}$ est nul pour tout $p+q > s_{\xi}(\pi_{v,-1})-t$ d'o˘ le rÈsultat.
\end{proof}

\rem l'ÈnoncÈ d'annulation pour $i > s_\xi(\pi_{v,-1})-t$ est encore valable ‡ niveau fini
et pour la non annulation pour $i=s_\xi(\pi_{v,-1})-t$, il suffit de prendre un niveau suffisamment petit
de sorte que la non annulation soit vÈrifiÈe sur $\overline \Qm_l$.

\begin{coro} 
Le nombre $s_{\xi}(\pi_{v,-1})$ ne dÈpend que de $\varrho$, on le notera alors $s_{\xi}(\varrho)$.
\end{coro}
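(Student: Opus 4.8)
L'idée est de lire l'énoncé sur la caractérisation cohomologique \emph{entière} de $s_\xi(\pi_{v,-1})$ fournie par le corollaire \ref{coro-torsion1}. En combinant ses parties d'annulation et de non annulation, ainsi que la remarque qui le suit (qui permet de réaliser la non annulation à un niveau fini adapté à $\xi$), l'entier $s_\xi(\pi_{v,-1})$ est exactement le plus grand $t$ avec $1 \leq t \leq s_{-1}=\lfloor d/g_{-1} \rfloor$ pour lequel la $\overline \Fm_l$-cohomologie
$$H^{\bullet}\bigl( X^{\geq tg_{-1}}_{\IC,\bar s_v,\overline{1_{tg_{-1}}}},\ j^{\geq tg_{-1}}_{\overline{1_{tg_{-1}}},!}\, HT_{\overline \Fm_l,\xi}(\pi_{v,-1},\Pi_t) \bigr)$$
n'est pas identiquement nulle (on utilise ici que $s_\xi(\pi_v)\leq s_{-1}$, car $\speh_s(\pi'_v)$ est une représentation de $GL_{sg_{-1}}(F_v)$, le degré de non annulation étant alors $s_\xi(\pi_{v,-1})-t$). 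Il suffit donc de vérifier que ce complexe modulo $l$ ne dépend de $\pi_{v,-1}$ qu'à travers sa réduction modulo $l$, qui, $\pi_{v,-1}$ étant de $\varrho$-type $-1$, n'est autre que $\varrho$.

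Pour cela, on utilise que $j^{\geq tg_{-1}}_{\overline{1_{tg_{-1}}},!}$ commute au foncteur $\Fm(-)$ (affinité de $j^{\geq h}_{\overline{1_h}}$), de sorte que la cohomologie ci-dessus est l'image par $\Fm(-)$ de la $\overline \Zm_l$-cohomologie de $j^{\geq tg_{-1}}_{\overline{1_{tg_{-1}}},!} HT_{\overline \Zm_l,\xi}(\pi_{v,-1},\Pi_t)$, que l'on calcule, comme dans la preuve du corollaire \ref{coro-torsion1}, via la suite spectrale (\ref{eq-ss-filt1}) associée à la filtration de stratification de \cite{boyer-torsion}, dont les gradués $\gr^i_!(\pi_{v,-1},\Pi_t)$ sont des extensions intermédiaires $\lexp p j^{=(t+i)g_{-1}}_{\overline{1_{tg_{-1}}},!*}$ de faisceaux pervers d'Harris--Taylor associés à $\pi_{v,-1}$, à coefficients dans $\overline \Zm_l$. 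Le point central est que la réduction modulo $l$ de ces faisceaux d'Harris--Taylor ne dépend que de $\varrho$ et des données numériques $(\xi,t,i)$ : le système local de Hecke $\FC(\pi_{v,-1},t)$ provient de la représentation $\pi_{v,-1}[t]_D$ de $D_{v,tg_{-1}}^\times$ via la correspondance de Jacquet--Langlands, et d'après la proposition \ref{prop-red-D} (jointe à la définition \ref{defi-varrho-type}) sa réduction modulo $l$ ne dépend que de $\varrho$ et de $t$ ; de même pour $\Pi_t$ et pour les représentations auxiliaires obtenues en tordant $\pi_{v,-1}$ et en formant des $\st_i$, dont les réductions modulo $l$ ne dépendent que de $\varrho$, $t$, $i$, tandis que celle de $V_\xi$ ne dépend que de $\xi$. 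On en conclut que, après application de $\Fm(-)$, les termes $E_1$ et donc l'aboutissement de (\ref{eq-ss-filt1}) sont les mêmes pour deux relèvements cuspidaux entiers de $\varrho$ de $\varrho$-type $-1$ ; en particulier le schéma d'annulation précédent l'est aussi, d'où $s_\xi(\pi_{v,-1})=s_\xi(\pi'_{v,-1})$, et l'on peut poser $s_\xi(\varrho)$.

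Le point délicat est le contrôle de la réduction modulo $l$ des extensions intermédiaires $\gr^i_!$ : sur $\overline \Zm_l$, $\lexp p j_{!*}$ ne commute pas à $\Fm(-)$, si bien qu'on ne peut pas identifier naïvement $\Fm(\gr^i_!)$ à l'extension intermédiaire du système local d'Harris--Taylor modulo $l$. On y remédie non pas en comparant les gradués modulo $l$ terme à terme, mais en invoquant la proposition \ref{prop-pp} (coïncidence de $\lexp p j_{!*}$ et $\lexp {p+} j_{!*}$ pour ces faisceaux) : elle garantit que la réduction modulo $l$ de $\gr^i_!$ est contrôlée par sa restriction à la strate ouverte munie de ses actions de Hecke, laquelle, d'après la proposition \ref{prop-red-D}, est indépendante du relèvement choisi. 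Tout le reste de l'argument du corollaire \ref{coro-torsion1} (affinité des strates de Newton, récurrence descendante sur $t$, autodualité des gradués) est formel et insensible au choix de $\pi_{v,-1}$, de sorte que l'entier qu'il produit ne dépend que de $\varrho$ et de $\xi$.
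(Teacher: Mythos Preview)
Your core idea is the same as the paper's: characterize $s_\xi(\pi_{v,-1})$ via the vanishing pattern of the $\overline \Fm_l$-cohomology furnished by Corollary~\ref{coro-torsion1}, then observe that the relevant mod-$l$ sheaf depends only on $r_l(\pi_{v,-1})=\varrho$. The paper does this in three lines, at $t=1$: by (\ref{eq-sec-torsion0}) and Corollary~\ref{coro-torsion1}, $s_\xi(\pi_{v,-1})-1$ is the top degree in which $H^\bullet\bigl(X_{\IC,\bar s_v,\overline{1_{g_{-1}}}},\, j^{=g_{-1}}_{\overline{1_{g_{-1}}},!} HT_{\overline \Fm_l,\xi}(\pi_{v,-1},\Pi_1)\bigr)$ is nonzero; since $\Fm j_! = j_! \Fm$, this sheaf depends only on $\varrho$.

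Your second and third paragraphs take an unnecessary detour that introduces a real gap. Once you have stated that $j_!$ commutes with $\Fm$ and that the reduction modulo $l$ of $\FC(\pi_{v,-1},t)$ depends only on $\varrho$ and $t$, you are finished: the sheaf $j_{\overline{1_{tg_{-1}}},!}^{\geq tg_{-1}} HT_{\overline \Fm_l,\xi}(\pi_{v,-1},\Pi_t)$ itself, and hence all of its cohomology, depends only on $\varrho$. There is no reason to pass through the spectral sequence (\ref{eq-ss-filt1}) or its graded pieces $\gr^i_!$. By bringing in the intermediate extensions you create the difficulty you then try to fix in your third paragraph, and the fix is not justified: Proposition~\ref{prop-pp} (the coincidence $\lexp p j_{!*}=\lexp{p+} j_{!*}$) tells you that the perverse sheaf is free, but it does \emph{not} say that $\Fm(\lexp p j_{!*} P)$ is determined by $\Fm(P)$ on the open stratum. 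Even granting that, you would still owe an argument that the differentials of (\ref{eq-ss-filt1}) are, after $\Fm$, independent of the lift; you do not address this. Finally, the phrase ``la cohomologie ci-dessus est l'image par $\Fm(-)$ de la $\overline \Zm_l$-cohomologie'' is at best ambiguous: as modules this is false in general (cf.~(\ref{eq-sec-torsion0})); it is only correct if read at the level of complexes in the derived category.

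In short: keep your first paragraph and the first two sentences of the second (commutation of $j_!$ with $\Fm$, and the reduction modulo $l$ of $\FC(\pi_{v,-1},t)$ via Proposition~\ref{prop-red-D}); they already constitute a complete proof, identical in substance to the paper's. Delete the spectral-sequence discussion and the third paragraph.
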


\begin{proof}
Rappelons que pour $P$ un $\overline \Zm_l$-faisceau
pervers sans torsion, on a 
\addtocounter{smfthm}{1}
\begin{equation} \label{eq-sec-torsion0}
0 \rightarrow H^i(X,P) \otimes_{\overline \Zm_l} \overline \Fm_l \longrightarrow
H^i(X,P \otimes^\Lm_{\overline \Zm_l} \overline \Fm_l) \longrightarrow H^{i+1}(X,P)[l] \rightarrow 0,
\end{equation}
de sorte que $s_{\xi}(\pi_{v,-1})$ est le plus grand entier $i$ tel que 
$H^i(X_{\IC,\bar s_v,\overline{1_h}}, \Fm j^{=g_{-1}}_{\overline{1_h},!} 
HT_{\overline \Zm_l,\xi,\overline{1_h}} (\pi_{v,-1},\Pi_1))$ est non nul, ce qui, comme 
$\Fm j^{=g_{-1}}_!=j^{=g_{-1}}_! \Fm$, ne dÈpend donc que de la rÈduction modulo $l$ de $\pi_{v,-1}$.
\end{proof}

\begin{prop} \label{prop-equivalence0}
Pour  $1 \leq t \leq \lfloor \frac{d}{g_{-1}} \rfloor$ et $h=tg_{-1}$, la torsion de 
$$H^{s_\xi(\varrho)-t}(X_{\IC,\bar s_v,\overline{1_h}}^{\geq tg_{-1}}, 
j^{\geq tg_{-1}}_{\overline{1_h},!}  HT_{\overline \Zm_l,\xi,\overline{1_h}}(\pi_{v,-1},\Pi_t))$$
est non nulle si et seulement si celle de
$$H^1(X_{\IC,\bar s_v,\overline{1_h}}^{\geq (s_\xi(\varrho)-1)g_{-1}}, 
j^{\geq (s_\xi(\varrho)-1)g_{-1}}_{\overline{1_h},!}  HT_{\overline \Zm_l,\xi,\overline{1_h}}
(\pi_{v,-1},\Pi_{s_\xi(\varrho)-1}))$$ 
est non nulle, quelle que soit la reprÈsentation 
$\Pi_{s_\xi(\varrho)-1}$ de $P_{h,(s_\xi(\varrho)-1)g_{-1}}(F_v)$ considÈrÈe.
\end{prop}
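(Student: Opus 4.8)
The plan is to remove the $!$‑extension in favour of the intermediate extension by means of the spectral sequence (\ref{eq-ss-filt1}), then to get rid of $\Pi_t$, and finally to convert ``nonzero torsion'' into a numerical jump between the characteristic‑$0$ and the mod $l$ Betti numbers, a jump which turns out to be insensitive to the level. As in the proof of the corollaire \ref{coro-torsion1}, for $p\geq1$ and $n\geq s_\xi(\varrho)-t$ the term $E_1^{p,n-p}$ of (\ref{eq-ss-filt1}) is a cohomology group in degree $n$ of the intermediate extension at level $t+p$ and vanishes: by le lemme \ref{lem-coho1} together with the autoduality furnished by la proposition \ref{prop-pp}, the cohomology of that intermediate extension is concentrated in $[(t+p)-s_\xi(\varrho),\,s_\xi(\varrho)-(t+p)]$, its free part lying in the top degree and its torsion vanishing already from degree $s_\xi(\varrho)-(t+p)+1$ on, while $n\geq s_\xi(\varrho)-t\geq s_\xi(\varrho)-(t+p)+1$. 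The same computation in degree $s_\xi(\varrho)-t+1$ shows $E_1^{0,s_\xi(\varrho)-t+1}=0$, so $E_r^{0,s_\xi(\varrho)-t}$ is neither the source nor the target of a nonzero differential; being the unique surviving term of its total degree, one obtains, compatibly with torsion,
$$H^{s_\xi(\varrho)-t}\bigl(X^{\geq tg_{-1}}_{\IC,\bar s_v,\overline{1_h}},\,j^{\geq tg_{-1}}_{\overline{1_h},!}HT_{\overline\Zm_l,\xi,\overline{1_h}}(\pi_{v,-1},\Pi_t)\bigr)\;\simeq\;H^{s_\xi(\varrho)-t}\bigl(X_{\IC,\bar s_v,\overline{1_h}},\,\lexp p j^{=tg_{-1}}_{\overline{1_h},!*}HT_{\overline\Zm_l,\xi,\overline{1_h}}(\pi_{v,-1},\Pi_t)\bigr),$$
and in particular $H^{s_\xi(\varrho)-t+1}$ of the left‑hand sheaf vanishes. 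Since $\Pi_t$ enters $HT_{\overline\Zm_l,\xi,\overline{1_h}}(\pi_{v,-1},\Pi_t)$ only as a tensor factor, constant along the variety and $\overline\Zm_l$‑flat on a lattice, the torsion of the left‑hand group above is that of $H^{s_\xi(\varrho)-t}\bigl(X^{\geq tg_{-1}}_{\IC,\bar s_v,\overline{1_h}},j^{\geq tg_{-1}}_{\overline{1_h},!}(\FC(\pi_{v,-1},t)_{\overline{1_h}}[d-tg_{-1}]\otimes\Xi^{(tg_{-1}-d)/2}\otimes V_\xi)\bigr)$ tensored with $\Pi_t$; hence it is nonzero if and only if this last group has nonzero torsion, independently of $\Pi_t$. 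This already accounts for the generality of $\Pi_{s_\xi(\varrho)-1}$ and reduces the statement to one about $\pi_{v,-1}$ and the level only (for $t\geq s_\xi(\varrho)$ there is no torsion by le lemme \ref{lem-coho1}, so one may assume $1\leq t\leq s_\xi(\varrho)-1$).

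It remains to see that the torsion of $H^{s_\xi(\varrho)-t}$ of $\PF_t:=j^{\geq tg_{-1}}_{\overline{1_h},!}(\FC(\pi_{v,-1},t)_{\overline{1_h}}[d-tg_{-1}]\otimes\Xi^{(tg_{-1}-d)/2}\otimes V_\xi)$ is nonzero for one $t\leq s_\xi(\varrho)-1$ if and only if it is for $t=s_\xi(\varrho)-1$. Now $\PF_t$ is a torsion‑free $\overline\Zm_l$‑faisceau pervers (the immersion $j^{\geq tg_{-1}}_{\overline{1_h}}$ being affine), $\Fm$ commutes with $j^{\geq tg_{-1}}_{\overline{1_h},!}$, and le corollaire \ref{coro-torsion1} says that the mod $l$ cohomology of $\PF_t$ vanishes above degree $s_\xi(\varrho)-t$; feeding this, together with the vanishing of $H^{s_\xi(\varrho)-t+1}(\PF_t)$ obtained above, into la suite exacte (\ref{eq-sec-torsion0}) shows that the $\overline\Fm_l$‑cohomology of $\PF_t$ in degree $s_\xi(\varrho)-t$ is the reduction of its $\overline\Zm_l$‑cohomology in that degree, so that the torsion of $H^{s_\xi(\varrho)-t}(\PF_t)$ is nonzero if and only if $\dim_{\overline\Fm_l}H^{s_\xi(\varrho)-t}(\Fm(\PF_t))>\dim_{\overline\Qm_l}(H^{s_\xi(\varrho)-t}(\PF_t)\otimes_{\overline\Zm_l}\overline\Qm_l)$. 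Both members are computed by the filtrations de stratification de \cite{boyer-torsion} et \cite{boyer-duke} — over $\overline\Qm_l$ through l'\'egalit\'e (\ref{egalite-hij}) and the results recalled in la proposition \ref{prop-torsion-Ql} and in \cite{boyer-compositio}, \cite{boyer-imj}, over $\overline\Fm_l$ through le corollaire \ref{coro-torsion1} and the analysis of \cite{boyer-aif} — and the passage from level $t$ to level $t+1$ only replaces each graded piece of the filtration by the corresponding one at level $t+1$ twisted by $\st_i(\pi_{v,-1}\{-t\tfrac{g-1}{2}\})$ and $\Xi^{i/2}$. Such twists being invisible to the congruences modulo $l$ responsible for the jump — congruences governed by $m(\varrho)$, cf. la proposition \ref{prop-red-D} and the examples after la d\'efinition \ref{defi-rep} — the difference of the two dimensions is the same for every $t$ with $1\leq t\leq s_\xi(\varrho)-1$, and evaluating at $t=s_\xi(\varrho)-1$ yields the equivalence.

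The delicate point is precisely this uniform comparison. On one side one must sharpen le corollaire \ref{coro-torsion1} into an actual computation of $\dim_{\overline\Fm_l}H^{s_\xi(\varrho)-t}(\Fm(\PF_t))$, i.e. control the extensions occurring in the filtrations de \cite{boyer-torsion} after reduction modulo $l$ and not merely their semisimplifications; here the commutation of $\Fm$ with the affine immersions $j^{=h}_{\overline{1_h}}$ and the autoduality de la proposition \ref{prop-pp} — which confines the cohomology of each graded piece to the window $[(t+i)-s_\xi(\varrho),\,s_\xi(\varrho)-(t+i)]$ with torsion‑free extremities, hence confines the possible jump, degree by degree, to a single graded piece — are what keep the bookkeeping under control. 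On the other side one must read off, from the structure of $r_l(\speh_i)$ and $r_l(\st_i)$ recalled in la proposition \ref{prop-red-D}, that the jump is unchanged when one increases the level by one; heuristically this is because that operation only tensors the relevant graded pieces by a Steinberg g\'en\'eralis\'e factor and a twist par $\Xi$. Once these two inputs are in place the two dimensions differ, for every $t\leq s_\xi(\varrho)-1$, by one and the same nonnegative integer, and the proposition — with the independence of $\Pi_{s_\xi(\varrho)-1}$ — follows.
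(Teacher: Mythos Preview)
Your first paragraph is correct and matches the paper: the spectral sequence (\ref{eq-ss-filt1}) together with lemme~\ref{lem-coho1} and the autoduality of proposition~\ref{prop-pp} do yield the isomorphism between $H^{s_\xi(\varrho)-t}$ of $j_{\overline{1_h},!}^{\geq tg_{-1}}$ and of $\lexp p j_{\overline{1_h},!*}^{=tg_{-1}}$, and the elimination of $\Pi_t$ is legitimate.

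The remainder, however, is a genuine gap. You reduce the proposition to the claim that the integer $\dim_{\overline\Fm_l}H^{s_\xi(\varrho)-t}(\Fm\PF_t)-\dim_{\overline\Qm_l}H^{s_\xi(\varrho)-t}(\PF_t\otimes\overline\Qm_l)$ is independent of $t$, and then --- as you yourself flag --- you do not prove this. The heuristic that ``passing from $t$ to $t+1$ only tensors the graded pieces by a Steinberg factor and a twist'' is not correct as stated: the stratification filtrations of $\PF_t$ and $\PF_{t+1}$ have different lengths ($s_{-1}-t+1$ versus $s_{-1}-t$ pieces), the tensor by $\st_i$ changes dimensions of invariant spaces in a way that depends on the level, and nothing in proposition~\ref{prop-red-D} or \cite{boyer-aif} asserts that the $\overline\Fm_l$- and $\overline\Qm_l$-dimensions shift by the same amount. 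What is needed is not a numerical coincidence but an identification of the torsion \emph{module} with something independent of $t$.

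This is exactly what the paper does, by a different route. After the reduction to $\lexp p j_{!*}$, the paper applies Grothendieck--Verdier duality (valid thanks to proposition~\ref{prop-pp}) to pass from degree $s_\xi(\varrho)-t$ to degree $t-s_\xi(\varrho)+1$, and then proves lemme~\ref{lem-equivalence}: a chain of short exact sequences $P_{k+1}\hookrightarrow j^{=(t+k+1)g_{-1}}_{\overline{1_h},!}j^{=(t+k+1)g_{-1},*}_{\overline{1_h}}P_k\twoheadrightarrow P_k$ (with $P_0=\lexp p j^{=tg_{-1}}_{\overline{1_h},!*}HT$), combined with affineness of the strata, yields actual isomorphisms
\[
H^{t-s_\xi(\varrho)+1}\bigl(\lexp p j^{=tg_{-1}}_{\overline{1_h},!*}HT\bigr)\;\simeq\;H^{t-s_\xi(\varrho)+2}(P_1)\;\simeq\;\cdots\;\simeq\;H^0(P_{s_\xi(\varrho)-t-1}),
\]
and the torsion of the last term is then read off from a single explicit long exact sequence linking it to the (non-)strictness of a map $H^0(\lexp p j_{!*}$ at level $s_\xi(\varrho))\to H^0(j_!$ at level $s_\xi(\varrho)-1)$, a condition manifestly independent of $t$. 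Your dimension-jump reformulation never reaches this structural identification, and the ``two inputs'' you list as prerequisites are precisely the content of the proposition.
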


\rem l'ÈnoncÈ prÈcÈdent est indÈpendant de la reprÈsentation infinitÈsimale $\Pi_t$, on pourrait
faire une formulation plus lÈgËre avec les $\FC(\pi_{v,-1},t)$ mais comme la dÈmonstration
fait intervenir naturellement ces parties infinitÈsimales, on a prÈfÈrÈ les laisser apparaÓtre.

\begin{proof}
Notons tout d'abord que d'aprËs ce qui prÈcËde pour $t>s_\xi(\varrho)$ (resp. $t=s_\xi(\varrho)$)
tous les groupes de cohomologie de $j^{\geq tg_{-1}}_{\overline{1_h},!}  
HT_{\overline \Zm_l,\xi,\overline{1_h}}(\pi_{v,-1},\Pi_t)$ et de
$\lexp p j^{\geq tg_{-1}}_{\overline{1_h},!*}  HT_{\overline \Zm_l,\xi,\overline{1_h}}(\pi_{v,-1},\Pi_t)$ 
sont nuls (resp. sauf pour $i=0$ auquel cas il est sans torsion). 
Il dÈcoule de la suite spectrale (\ref{eq-ss-filt1}) et du lemme \ref{lem-coho1}, que
$$H^{s_\xi(\varrho)-t}(X_{\IC,\bar s_v,\overline{1_h}}^{\geq tg_{-1}}, j^{\geq tg_{-1}}_{\overline{1_h},!}  
HT_{\overline \Zm_l,\xi,\overline{1_h}} (\pi_{v,-1},\Pi_t)) \simeq H^{s_\xi(\varrho)-t}
(X_{\IC,\bar s_v,\overline{1_h}}^{\geq tg_{-1}},
\lexp p j^{\geq tg_{-1}}_{\overline{1_h},!*}  HT_{\overline \Zm_l,\xi,\overline{1_h}} (\pi_{v,-1},\Pi_t)).$$
Ainsi par dualitÈ et d'aprËs la proposition \ref{prop-pp}, on est amenÈ ‡ Ètudier la torsion de
$H^{t-s_\xi(\varrho)+1}(X_{\IC,\bar s_v,\overline{1_h}}^{\geq tg_{-1}}, 
\lexp p j^{\geq tg_{-1}}_{\overline{1_h},!*}  HT_{\overline \Zm_l,\xi,\overline{1_h}}(\pi_{v,-1},\Pi_t))$.
Le rÈsultat dÈcoule alors trivialement du lemme suivant.
\end{proof}

\begin{lemm} \label{lem-equivalence}
Pour tout $t \geq s_\xi(\varrho)-1$, on a l'Èquivalence
\begin{multline*}
H^0_{tor}(X_{I,\bar s_v,\overline{1_h}}^{\geq (s_\xi(\varrho)-1)g_{-1}}, 
\lexp p j^{\geq (s_\xi(\varrho)-1)g_{-1}}_{\overline{1_h},!*}  
HT_{\overline \Zm_l,\xi,\overline{1_h}}(\pi_{v,-1},\Pi_{s_\xi(\varrho)-1})) \neq (0) \\
\Leftrightarrow \\
H_{tor}^{t-s_\xi(\varrho)+1}(X_{I,\bar s_v,\overline{1_h}}^{\geq tg_{-1}}, 
\lexp p j^{\geq tg_{-1}}_{\overline{1_h},!*}  HT_{\overline \Zm_l,\xi,\overline{1_h}}(\pi_{v,-1},\Pi_t)) \neq (0).
\end{multline*}
\end{lemm}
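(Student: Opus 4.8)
The plan is to prove the equivalence by induction on $t$, starting from the trivial case $t = s_\xi(\varrho)-1$ (where the two statements coincide), and to relate the torsion at level $t+1$ to the torsion at level $t$ using the standard long exact sequence attaching the closed stratum $X^{\geq (t+1)g_{-1}}_{I,\bar s_v,\overline{1_h}}$ to the open stratum $X^{=tg_{-1}}_{I,\bar s_v,\overline{1_h}}$ inside $X^{\geq tg_{-1}}_{I,\bar s_v,\overline{1_h}}$. Concretely, for $P := \lexp p j^{\geq tg_{-1}}_{\overline{1_h},!*} HT_{\overline \Zm_l,\xi,\overline{1_h}}(\pi_{v,-1},\Pi_t)$, the adjunction triangle $i_! i^! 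P \to P \to Rj_* j^* P$ on $X^{\geq tg_{-1}}$ — after taking cohomology, and identifying $i^! P$ up to shift with an intermediate extension one stratum deeper (this is where the inductive description of the graded pieces $\gr^i_!$ via Harris--Taylor perverse sheaves from \cite{boyer-torsion}, \cite{boyer-duke} enters) — produces a long exact sequence in which the middle term has a single nonzero cohomology group (in degree $0$, torsion-free) by Corollary \ref{coro-torsion1} and Lemma \ref{lem-coho1}, whenever $t \geq s_\xi(\varrho)$. Hence the connecting map forces an isomorphism between the torsion of $H^{t-s_\xi(\varrho)+1}$ of the stratum at level $t$ and the torsion of $H^{t-s_\xi(\varrho)}$ of the (deeper) stratum appearing in the $i_! i^!$ term, which by the inductive identification is precisely the level-$(t-1)$ quantity up to shifting the index down by one.

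First I would make the degree bookkeeping precise: by Lemma \ref{lem-coho1} applied to the middle term and to the relevant graded pieces, and by the affineness of the open Newton stratum used repeatedly above, the long exact sequence degenerates outside a two-term window, so that for every $t \geq s_\xi(\varrho)$ one gets
$$H^{t-s_\xi(\varrho)+1}_{tor}\bigl(X^{\geq tg_{-1}}_{\overline{1_h}}, \lexp p j^{\geq tg_{-1}}_{\overline{1_h},!*} HT(\pi_{v,-1},\Pi_t)\bigr) \;\cong\; H^{t-s_\xi(\varrho)}_{tor}\bigl(X^{\geq (t-1)g_{-1}\,?}_{\overline{1_h}}, \lexp p j^{\geq ?}_{\overline{1_h},!*} HT(\pi_{v,-1},\Pi'_{t-1})\bigr),$$
the right-hand side being the level-$(t-1)$ torsion group for a suitable infinitesimal part $\Pi'_{t-1}$ produced by the graded-piece formula. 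Iterating this isomorphism from $t$ down to $s_\xi(\varrho)-1$ collapses the shifted degree to $H^0_{tor}$ at level $s_\xi(\varrho)-1$, which is exactly the left-hand side of the asserted equivalence; the independence of the conclusion from the choice of $\Pi_t$ (Proposition \ref{prop-equivalence0}'s remark) guarantees that the successive $\Pi'$ that appear do not affect the vanishing or non-vanishing. The case $s_\xi(\varrho)-1 \leq t < s_\xi(\varrho)$ is handled separately but is immediate since then only $t = s_\xi(\varrho)-1$ survives, the two sides being identical.

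The main obstacle I expect is the identification, with \emph{integral} coefficients and respecting torsion, of the term $i^!_{(t+1)g_{-1}}\bigl(\lexp p j^{\geq tg_{-1}}_{\overline{1_h},!*} HT_{\overline \Zm_l}(\pi_{v,-1},\Pi_t)\bigr)$ with (a shift of) the intermediate-extension perverse sheaf on the deeper stratum attached to $\st_{?}(\pi_{v,-1})$ tensored with the appropriate infinitesimal representation — over $\overline \Qm_l$ this is essentially \eqref{egalite-hij}, but integrally one must invoke Proposition \ref{prop-pp} (so that $\lexp p$ and $\lexp {p+}$ intermediate extensions agree, hence the graded pieces are self-dual and torsion-free as perverse sheaves) together with the filtration and its graded pieces from \cite{boyer-torsion}, \cite{boyer-duke}, and then check that the spectral sequence \eqref{eq-ss-filt1} truncated to the relevant strata has no room for differentials hitting or leaving the degree in question. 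Once that compatibility is in place the long exact sequence argument is formal, and the rest is the index arithmetic described above.
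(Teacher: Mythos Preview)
Your overall strategy --- reduce the question at depth $t$ to the fixed reference depth $s_\xi(\varrho)-1$ by iterating through the strata, using affineness and the vanishing of Lemma~\ref{lem-coho1} --- is the same as the paper's. But there is a genuine gap in your execution. You claim that $i^!_{(t+1)g_{-1}}\bigl(\lexp p j^{\geq tg_{-1}}_{\overline{1_h},!*} HT\bigr)$ is, up to shift, \emph{a single} intermediate extension one stratum deeper. That is false: already over $\overline \Qm_l$ the identity \eqref{egalite-hij} shows that the cone of $j_!\to\lexp p j_{!*}$ (equivalently $i^*\lexp p j_{!*}$, and by duality $i^!\lexp p j_{!*}$) carries \emph{all} the deeper Harris--Taylor intermediate extensions as graded pieces, not just the adjacent one. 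What is true is that in the single cohomological degree you care about, all but one of those pieces are acyclic; but making that precise requires running an explicit filtration, which you do not set up. Relatedly, in the triangle $i_!i^!P\to P\to Rj_*j^*P$ neither $P$ nor $Rj_*HT$ is concentrated in one degree, so your ``middle term has a single nonzero cohomology group'' is not right as stated.

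The paper works from the $j_!$ side rather than the $i^!$ side (these are Verdier-dual, so your route is in principle viable): it defines $P_1:=\ker\bigl(j^{=tg_{-1}}_!HT\twoheadrightarrow \lexp p j^{=tg_{-1}}_{!*}HT\bigr)$, then inductively $P_{k+1}:=\ker\bigl(j^{=(t+k)g_{-1}}_!j^{=(t+k)g_{-1},*}P_k\twoheadrightarrow P_k\bigr)$, and uses affineness to get genuine isomorphisms $H^{t-s_\xi(\varrho)+1}(P)\simeq H^{t-s_\xi(\varrho)+2}(P_1)\simeq\cdots\simeq H^0(P_{s_\xi(\varrho)-t-1})$. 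The step you are missing entirely is the endgame: one must show that the torsion of $H^0(P_{s_\xi(\varrho)-t-1})$ is nonzero precisely when the reference torsion at level $s_\xi(\varrho)-1$ is. The paper does this by a \emph{non-strictness} argument --- the hypothesis on the left is reformulated as the failure of a specific injection $H^0\bigl(\lexp p j^{=s_\xi(\varrho)g_{-1}}_{!*}HT(\pi_{v,-1},\widetilde\Pi_{s_\xi(\varrho)})\bigr)\hookrightarrow H^0(\Fil^0_!/\Fil^{-2}_!)$ to be strict, and then $P_{s_\xi(\varrho)-t-1}$ is matched with exactly this situation via an explicit diagram comparing the Speh sub\-representation $\widetilde\Pi_{s_\xi(\varrho)}\hookrightarrow\Pi_{s_\xi(\varrho)}$. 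Your appeal to ``independence from $\Pi_t$'' is circular here: that remark is a \emph{consequence} of the lemma, not an input. (A minor point: the condition should read $t\le s_\xi(\varrho)-1$, and the iteration runs \emph{inward} from stratum $tg_{-1}$ toward $s_\xi(\varrho)g_{-1}$ with the cohomological degree increasing, not outward toward $(t-1)g_{-1}$ as your display with the question mark suggests.)
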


\begin{proof}
Supposons dans un premier temps que le membre de gauche de l'Èquivalence prÈcÈdente soit vÈrifiÈ.
ConsidÈrons alors la suite exacte courte
\begin{multline*}
0 \rightarrow \Fil^{-2}_!(\pi_{v,-1},\Pi_{s_\xi(\varrho)-1}) \longrightarrow 
j^{= (s_\xi(\varrho)-1)g_{-1}}_{\overline{1_h},!}  HT_{\overline \Zm_l,\xi,\overline{1_h}}
(\pi_{v,-1},\Pi_{s_\xi(\varrho)-1}) \\ \longrightarrow
\Fil^0_!(\pi_{v,-1},\Pi_{s_\xi(\varrho)-1})/\Fil^{-2}_! (\pi_{v,-1},\Pi_{s_\xi(\varrho)-1})\rightarrow 0,
\end{multline*}
de sorte que pour tout $i$, on a
\begin{multline*}
H^i(X_{I,\bar s_v,\overline{1_h}}^{\geq (s_\xi(\varrho)-1)g_{-1}}, 
j^{\geq (s_\xi(\varrho)-1)g_{-1}}_{\overline{1_h},!}  
HT_{\overline \Zm_l,\xi,\overline{1_h}}(\pi_{v,-1},\Pi_{s_\xi(\varrho)-1})) \simeq \\
H^i(X_{I,\bar s_v,\overline{1_h}}^{\geq (s_\xi(\varrho)-1)g_{-1}},\Fil^0_!(\pi_{v,-1},\Pi_{s_\xi(\varrho)-1})/
\Fil^{-2}_! (\pi_{v,-1},\Pi_{s_\xi(\varrho)-1})).
\end{multline*}
De la suite exacte courte
\begin{multline*}
0 \rightarrow \lexp p j^{=s_\xi(\varrho)g_{-1}}_{\overline{1_h},!*} HT_{\overline \Zm_l,\xi,\overline{1_h}}
(\pi_{v,-1},\Pi_{s_\xi(\varrho)} ) \otimes \Xi^{\frac{1}{2}}
\longrightarrow \\ \Fil^0_!(\pi_{v,-1},\Pi_{s_\xi(\varrho)-1})/
\Fil^{-2}_! (\pi_{v,-1},\Pi_{s_\xi(\varrho)-1}) \\ \longrightarrow 
\lexp p j^{=(s_\xi(\varrho)-1)g_{-1}}_{\overline{1_h},!*} HT_{\overline \Zm_l,\xi,\overline{1_h}}
(\pi_{v,-1},\Pi_{s_\xi(\varrho)-1}) \rightarrow 0
\end{multline*}
o˘
$$ \Pi_{s_\xi(\varrho)} := 
\ind_{P_{h,s_\xi(\varrho)-1)g_{-1},s_\xi(\varrho)g_{-1}}(F_v)}^{P_{h,s_\xi(\varrho)g_{-1}}(F_v)}
\bigl (\Pi_{s_\xi(\varrho)-1} \{ \frac{g-1}{2} \} \otimes \pi_{v,-1} \{ (1-s_\xi(\varrho))\frac{g-1}{2} \} \bigr ).$$
En utilisant
\begin{itemize}
\item d'une part que les groupes de cohomologie de $\lexp p j^{s_\xi(\varrho)g_{-1}}_{\overline{1_h},!*} 
HT_{\overline \Zm_l,\xi,\overline{1_h}} (\pi_{v,-1},\Pi_{s_\xi(\varrho)})$ sont tous nuls sauf en degrÈ $0$
auquel cas la torsion est nulle, 

\item et que d'autre part, $X^{=(s_\xi(\varrho)-1)g_{-1}}_{\IC,\bar s_v,\overline{1_h}}$ est affine
de sorte que la cohomologie de $j^{\geq (s_\xi(\varrho)-1)g_{-1}}_{\overline{1_h},!}  
HT_{\overline \Zm_l,\xi,\overline{1_h}} (\pi_{v,-1},\Pi_{s_\xi(\varrho)-1})$ et donc de 
$\Fil^0_!(\pi_{v,-1},\Pi_{s_\xi(\varrho)-1})/ \Fil^{-2}_! (\pi_{v,-1},\Pi_{s_\xi(\varrho)-1})$ est nul en degrÈ $<0$
et sans torsion en degrÈ $0$,
\end{itemize}
on en dÈduit que le morphisme 
$$H^0(X^{\geq 1}_{\IC,\bar s_v}, \lexp p j^{s_\xi(\varrho)g_{-1}}_{\overline{1_h},!*} 
HT_{\overline \Zm_l,\xi,\overline{1_h}} (\pi_{v,-1},\Pi_{s_\xi(\varrho)})) \longrightarrow
H^0(X^{\geq 1}_{\IC,\bar s_v}, \Fil^0_!(\pi_{v,-1},\Pi_{s_\xi(\varrho)-1})/\Fil^{-2}_! (\pi_{v,-1},
\Pi_{s_\xi(\varrho)-1}))$$
n'est pas strict, i.e. la torsion du conoyau est non nulle.

\rem Notons que pour $h=(s_\xi(\varrho)-1)g_{-1}$, la reprÈsentation $\Pi_{s_\xi(\varrho)}$
est irrÈductible pourvu que $\Pi_{s_\xi(\varrho)-1}$ le soit elle mÍme. Pour $h<(s_\xi(\varrho)-1)g_{-1}$,
la suite exacte courte prÈcÈdente est une somme directe indexÈe par les composantes
$X^{=(s_\xi(\varrho)-1)g_{-1}}_{\IC,\bar s_v}$ contenue dans $X^{\geq h}_{\IC,\bar s_v,\overline{1_h}}$
de sorte que pour tout $(0) \neq \widetilde{\Pi_{s_\xi(\varrho)}} \hookrightarrow \Pi_{s_\xi(\varrho)}$,
le morphisme composÈ
$$\xymatrix{
H^0(X^{\geq 1}_{\IC,\bar s_v}, \lexp p j^{s_\xi(\varrho)g_{-1}}_{\overline{1_h},!*} 
HT_{\overline \Zm_l,\xi,\overline{1_h}} (\pi_{v,-1},\widetilde{\Pi_{s_\xi(\varrho)}})) \ar[d] \ar[dr] \\
H^0(X^{\geq 1}_{\IC,\bar s_v}, \lexp p j^{s_\xi(\varrho)g_{-1}}_{\overline{1_h},!*} 
HT_{\overline \Zm_l,\xi,\overline{1_h}} (\pi_{v,-1},\Pi_{s_\xi(\varrho)})) \ar[r] &
H^0(X^{\geq 1}_{\IC,\bar s_v}, \Fil^0_!(\pi_{v,-1},\Pi_{s_\xi(\varrho)-1})/\Fil^{-2}_! (\pi_{v,-1},
\Pi_{s_\xi(\varrho)-1}))
}$$
n'est pas strict.

\medskip

ConsidÈrons ‡ prÈsent le groupe de cohomologie d'indice $t-s_\xi(\varrho)+1$ de
$\lexp p j^{\geq tg_{-1}}_{\overline{1_h},!*} HT_{\overline \Zm_l,\xi,\overline{1_h}}(\pi_{v,-1},\Pi_t)$.
On part des suites exactes courtes
$$\begin{array}{ccccc}
P_1 & \hookrightarrow & j^{= tg_{-1}}_{\overline{1_h},!} 
HT_{\overline \Zm_l,\xi,\overline{1_h}}(\pi_{v,-1},\Pi_t) & \twoheadrightarrow & 
\lexp p j^{= tg_{-1}}_{\overline{1_h},!*} HT_{\overline \Zm_l,\xi,\overline{1_h}}(\pi_{v,-1},\Pi_t) \\
P_2 & \hookrightarrow & j^{=(t+1)g_{-1}}_{\overline{1_h},!} j^{=(t+1)g_{-1},*}_{\overline{1_h}} P_1 &
\twoheadrightarrow & P_1 \\
\cdots & \cdots & \cdots & \cdots & \cdots \\
P_{s_\xi(\varrho)-t} & \hookrightarrow & j^{=(s_\xi(\varrho)-1)g_{-1}}_{\overline{1_h},!}
 j^{=(s_\xi(\varrho)-1)g_{-1},*}_{\overline{1_h}} P_{s_\xi(\varrho)-t-1} & \twoheadrightarrow &
 P_{s_\xi(\varrho)-t-1}.
 \end{array}$$
Rappelons que ces suites sont construites sur $\overline \Qm_l$ dans \cite{boyer-invent2} et que 
leur version entiËre est le rÈsultat principal de \cite{boyer-duke}.
Comme en outre les $X^{=tg_{-1}}_{\IC,\bar s_v,\overline{1_h}}$ sont affines, on en dÈduit que
\begin{multline*}
H^{t-s_\xi(\varrho)+1} (X^{\geq 1}_{\IC,\bar s_v}, \lexp p j^{=tg_{-1}}_{\overline{1_h},!*}
HT_{\overline \Zm_l,\xi,\overline{1_h}}(\pi_{v,-1},\Pi_t)) \simeq \\
H^{t-s_\xi(\varrho)+2} (X^{\geq 1}_{\IC,\bar s_v}, P_1) \simeq 
\cdots  \simeq H^{0} (X^{\geq 1}_{\IC,\bar s_v}, P_{s_\xi(\varrho)-t-1}).
\end{multline*}
On peut en outre continuer le processus prÈcÈdent de sorte que $P_{s_\xi(\varrho)-t}$
s'inscrit dans une suite exacte courte
$$0 \rightarrow Q \longrightarrow P_{s_\xi(\varrho)-t} \longrightarrow 
\lexp p j^{=s_\xi(\varrho)g_{-1}}_{\overline{1_h},!*} HT_{\overline \Zm_l,\xi,\overline{1_h}}
(\pi_{v,-1},\widetilde{\Pi_{s_\xi(\varrho)}} )\rightarrow 0,$$
o˘ on a posÈ 
$$\widetilde{\Pi_{s_\xi(\varrho)}}:= \Pi_t \{ (s_\xi(\varrho)-t)
\frac{g-1}{2} \} \otimes \speh_{s_\xi(\varrho)-t}(\pi_{v,-1} \{ -t \frac{g-1}{2} \})$$
et o˘ les constituants irrÈductibles de $Q \otimes_{\overline \Zm_l} \overline \Qm_l$
sont de la forme $\lexp p j^{=(s_\xi(\varrho)+\delta)g_{-1}}_{\overline{1_h},!*} 
HT_{\overline \Zm_l,\xi,\overline{1_h}}(\pi_{v,-1},\Pi'_\delta)$ et n'ont donc pas de cohomologie.
On obtient ainsi la suite exacte longue de cohomologie
\begin{multline*}
0 \rightarrow H^{-1} (X^{\geq 1}_{\IC,\bar s_v}, P_{s_\xi(\varrho)-t-1}) \longrightarrow \\
H^{0} (X^{\geq 1}_{\IC,\bar s_v}, P_{s_\xi(\varrho)-t})  \longrightarrow
H^{0} (X^{\geq 1}_{\IC,\bar s_v}, j^{=(s_\xi(\varrho)-1)g_{-1}}_{\overline{1_h},!}
j^{=(s_\xi(\varrho)-1)g_{-1},*}_{\overline{1_h}} P_{s_\xi(\varrho)-t-1}) \\ \longrightarrow 
H^{0} (X^{\geq 1}_{\IC,\bar s_v}, P_{s_\xi(\varrho)-t-1}) \rightarrow 0,
\end{multline*}
avec $H^{0} (X^{\geq 1}_{\IC,\bar s_v}, P_{s_\xi(\varrho)-t}) \simeq H^{0} (X^{\geq 1}_{\IC,\bar s_v},
\lexp p j^{=s_\xi(\varrho)g_{-1}}_{\overline{1_h},!*} HT_{\overline \Zm_l,\xi,\overline{1_h}}
(\pi_{v,-1},\widetilde{\Pi_{s_\xi(\varrho)}} ))$. Posons
$$j^{=(s_\xi(\varrho)-1)g_{-1},*}_{\overline{1_h}} P_{s_\xi(\varrho)-t-1} \simeq 
HT_{\overline \Zm_l,\xi,\overline{1_h}}(\pi_{v,-1},\Pi_{s_\xi(\varrho)-t-1})$$
avec
$$\Pi_{s_\xi(\varrho)-t-1}:=\Pi_t \{ (s_\xi(\varrho)-t-1)\frac{g-1}{2} \} \otimes 
\speh_{s_\xi(\varrho)-t-1}(\pi_{v,-1} \{ -t\frac{g-1}{2} \}).$$
On a alors
$$0 \rightarrow A \longrightarrow  j^{=(s_\xi(\varrho)-1)g_{-1}}_{\overline{1_h},!}
j^{=(s_\xi(\varrho)-1)g_{-1},*}_{\overline{1_h}} P_{s_\xi(\varrho)-t-1} \longrightarrow
\lexp p  j^{=(s_\xi(\varrho)-1)g_{-1}}_{\overline{1_h},!*}
j^{=(s_\xi(\varrho)-1)g_{-1},*}_{\overline{1_h}} P_{s_\xi(\varrho)-t-1} \rightarrow 0$$
avec 
$$\xymatrix{
P_{s_\xi(\varrho)-t} \ar@{^{(}->}[r] \ar@{->>}[d] & A \ar@{->>}[d] \\
\lexp p j^{=s_\xi(\varrho)g_{-1}}_{\overline{1_h},!*} HT_{\overline \Zm_l,\xi,\overline{1_h}}
(\pi_{v,-1},\widetilde{\Pi_{s_\xi(\varrho)}} ) \ar@{^{(}->}[r] &
\lexp p j^{=s_\xi(\varrho)g_{-1}}_{\overline{1_h},!*} HT_{\overline \Zm_l,\xi,\overline{1_h}}
(\pi_{v,-1},\Pi_{s_\xi(\varrho)} )
}$$
avec 
$$\Pi_{s_\xi(\varrho)} \simeq \Pi_t \{ (s_\xi(\varrho)-t)
\frac{g-1}{2} \} \otimes \Bigl ( \speh_{s_\xi(\varrho)-t-1}(\pi_{v,-1} \{ -\frac{1}{2} \}) \times
\pi_{v,-1} \{ \frac{s_\xi(\varrho)-t-2}{2} \} \Bigr ) \{ -t \frac{g-1}{2} \},$$
et o˘ l'inclusion de la ligne du bas est donnÈe par
$$\speh_{s_\xi(\varrho)-t}(\pi_{v,-1}) \hookrightarrow 
\speh_{s_\xi(\varrho)-t-1}(\pi_{v,-1} \{ -\frac{1}{2} \}) \times \pi_v \{ \frac{s_\xi(\varrho)-t-2}{2} \} .$$
Rappelons par ailleurs que la cohomologie de $P_{s_\xi(\varrho)-t}$ (resp. de $A$)
est isomorphe ‡ celle de $\lexp p j^{=s_\xi(\varrho)g_{-1}}_{\overline{1_h},!*} 
HT_{\overline \Zm_l,\xi,\overline{1_h}} (\pi_{v,-1},\widetilde{\Pi_{s_\xi(\varrho)}} )$
(resp. ‡ celle de $ \lexp p j^{=s_\xi(\varrho)g_{-1}}_{\overline{1_h},!*} 
HT_{\overline \Zm_l,\xi,\overline{1_h}}(\pi_{v,-1},\Pi_{s_\xi(\varrho)} )$).
D'aprËs la remarque prÈcÈdente, on en dÈduit alors que la flËche
$$H^{0} (X^{\geq 1}_{\IC,\bar s_v}, P_{s_\xi(\varrho)-t})  \longrightarrow
H^{0} (X^{\geq 1}_{\IC,\bar s_v}, j^{=(s_\xi(\varrho)-1)g_{-1}}_{\overline{1_h},!}
j^{=(s_\xi(\varrho)-1)g_{-1},*}_{\overline{1_h}} P_{s_\xi(\varrho)-t-1})$$
n'est pas stricte et que donc la torsion de 
$$H^{0} (X^{\geq 1}_{\IC,\bar s_v}, P_{s_\xi(\varrho)-t-1}) \simeq 
H^{t-s_\xi(\varrho)+1} (X^{\geq 1}_{\IC,\bar s_v}, \lexp p j^{=tg_{-1}}_{\overline{1_h},!*}
HT_{\overline \Zm_l,\xi,\overline{1_h}}(\pi_{v,-1},\Pi_t))$$
n'est pas nulle.

\end{proof}

Supposons ‡ prÈsent qu'il existe une reprÈsentation irrÈductible cuspidale $\pi_{v,0}$ de $\varrho$-type $0$ telle que
$$0 \leq  s_{\xi}(\pi_{v,0}) - 1 < s_{\xi}(\varrho) - m(\varrho).$$

\begin{prop} \label{prop-torsion0}
Sous les hypothËses prÈcÈdentes et pour un niveau $I$ assez profond, la torsion de
$$H^{m(\varrho)- s_{\xi}(\pi_{v,-1})-1}(X_{\IC,\bar s_v},\lexp {p+} j^{=m(\varrho)g_{-1}}_{!*} 
HT_{\overline \Zm_l}(\pi_{v,0},\Pi_1))$$
est non nulle.
\end{prop}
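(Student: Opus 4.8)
The plan is to deduce the statement from the machinery already assembled for the supercuspidal lift $\pi_{v,-1}$, transported to the cuspidal representation $\pi_{v,0}$ of $\varrho$-type $0$ via Proposition~\ref{prop-red-D} and the structure of the reduction modulo $l$. Recall that $\pi_{v,0}$ has the feature that its parabolic induction $\varrho\{\frac{1-m(\varrho)}{2}\}\times\cdots\times\varrho\{\frac{m(\varrho)-1}{2}\}$ has $\pi_{v,0}$ as its unique cuspidal subquotient; hence, after reduction modulo $l$, the Harris--Taylor sheaf $HT_{\overline\Fm_l}(\pi_{v,0},\Pi_1)$ is glued from the sheaves $HT_{\overline\Fm_l}(\pi_{v,-1},-)$ over a length-$m(\varrho)$ "block", in a way parallel to the Steinberg-type short exact sequences used in Lemma~\ref{lem-equivalence}. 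First I would make this precise: over $\overline\Zm_l$ one has a filtration of $\lexp{p+}j^{=m(\varrho)g_{-1}}_{!*}HT_{\overline\Zm_l}(\pi_{v,0},\Pi_1)$ whose graded pieces are intermediate extensions of Harris--Taylor sheaves attached to $\pi_{v,-1}$ with infinitesimal parts of the form $\Pi_1\{\ast\}\otimes\speh_i(\pi_{v,-1}\{\ast\})$ for $1\le i\le m(\varrho)$, exactly the objects appearing in the filtration $\Fil^\bullet_!$ of \cite{boyer-torsion} recalled before Lemma~\ref{lem-coho1}.

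Next I would run the spectral sequence (\ref{eq-ss-filt1}) for this filtration, computing $H^{m(\varrho)-s_\xi(\pi_{v,-1})-1}$ of the target. By Corollary~\ref{coro-torsion1} and Lemma~\ref{lem-coho1}, all the graded pieces attached to $\pi_{v,-1}$ with Speh-length $i$ have cohomology concentrated in the expected range $|j|\le s_\xi(\varrho)-i$ (with the $\pi_{v,-1}$-parameter controlling $s_\xi(\varrho)$), and moreover are torsion-free there; so for degree reasons the only contributions to $H^{m(\varrho)-s_\xi(\pi_{v,-1})-1}$ come from the two extreme graded pieces of the block, namely the terms with $i=m(\varrho)-1$ and $i=m(\varrho)$. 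The hypothesis $0\le s_\xi(\pi_{v,0})-1<s_\xi(\varrho)-m(\varrho)$ is precisely what guarantees these two pieces sit in neighbouring total degrees and that the relevant edge map of the spectral sequence is the connecting map between $H^0$ of the $i=m(\varrho)$ piece and $H^1$ of the $i=m(\varrho)-1$ piece. Then I would invoke Proposition~\ref{prop-equivalence0} together with Lemma~\ref{lem-equivalence}, applied with $t=m(\varrho)-1$: these show that for a deep enough level $I$ the torsion of $H^1$ of $j^{\ge(s_\xi(\varrho)-1)g_{-1}}_!HT_{\overline\Zm_l}(\pi_{v,-1},\Pi_{s_\xi(\varrho)-1})$ is nonzero, and by the equivalence this forces the composite $H^0\to H^0$ in the long exact sequence of the block to fail to be strict. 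Passing that non-strictness through the filtration spectral sequence yields nonvanishing torsion in $H^{m(\varrho)-s_\xi(\pi_{v,-1})-1}(X_{\IC,\bar s_v},\lexp{p+}j^{=m(\varrho)g_{-1}}_{!*}HT_{\overline\Zm_l}(\pi_{v,0},\Pi_1))$, using the short exact sequence (\ref{eq-sec-torsion0}) to convert the cokernel torsion into a genuine torsion class.

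The main obstacle I anticipate is the first step: constructing, over $\overline\Zm_l$ rather than $\overline\Qm_l$, the filtration of $\lexp{p+}j^{=m(\varrho)g_{-1}}_{!*}HT_{\overline\Zm_l}(\pi_{v,0},\Pi_1)$ with graded pieces the $\pi_{v,-1}$-Harris--Taylor sheaves, and checking that this filtration is compatible with the reduction-modulo-$l$ functor $\Fm(-)$. On $\overline\Qm_l$ this is the analogue of the decomposition (\ref{egalite-hij}) applied to a cuspidal (not supercuspidal mod $l$) datum; the integral refinement requires the results of \cite{boyer-duke} on integral structures of the stratification filtration, and one must be careful that the lattices involved are exactly the Hecke-stable ones for which Proposition~\ref{prop-pp} gives $\lexp pj_{!*}=\lexp{p+}j_{!*}$ for the $\pi_{v,-1}$-pieces. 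Once that compatibility is in hand, the rest is a bookkeeping of degrees in the spectral sequence, controlled entirely by the inequality $s_\xi(\pi_{v,0})-1<s_\xi(\varrho)-m(\varrho)$, which isolates a single non-strict edge map and thereby produces the torsion; the "niveau $I$ assez profond" is needed only to ensure, via the remark after Corollary~\ref{coro-torsion1}, that the relevant $\overline\Qm_l$-cohomology in degree $s_\xi(\varrho)-(s_\xi(\varrho)-1)=1$ of the $\pi_{v,-1}$-piece does not vanish, so that Lemma~\ref{lem-equivalence} has a nonempty hypothesis to feed on.
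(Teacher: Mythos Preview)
Your proposal has a genuine gap at precisely the point you flag as the main obstacle, and it is worse than an obstacle: the integral filtration you posit does not exist. Over $\overline{\Qm}_l$, the Harris--Taylor local system $\FC(\pi_{v,0},1)$ attached to the cuspidal $\pi_{v,0}$ is irreducible, so $\lexp{p+}j^{=g_0}_{!*}HT_{\overline{\Zm}_l}(\pi_{v,0},\Pi_1)\otimes\overline{\Qm}_l$ cannot be filtered by Harris--Taylor sheaves attached to $\pi_{v,-1}$. The link between $\pi_{v,0}$ and $\pi_{v,-1}$ appears only \emph{after} reduction modulo $l$ (this is exactly the content of Proposition~\ref{prop-red-D}), and there is no way to lift it to a $\overline{\Zm}_l$-filtration with $\pi_{v,-1}$-graded pieces. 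Consequently the spectral-sequence argument you sketch, together with the appeal to Proposition~\ref{prop-equivalence0} and Lemma~\ref{lem-equivalence}, never gets off the ground; those two results are about $\pi_{v,-1}$-sheaves and play no role here.

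The paper's argument sidesteps this completely, and is shorter. First, by duality one reduces to showing that $H^{s_\xi(\varrho)-m(\varrho)}(X_{\IC,\bar s_v},\lexp{p}j^{=g_0}_{!*}HT_{\overline{\Zm}_l}(\pi_{v,0},\Pi_1))$ is nonzero; by the hypothesis $s_\xi(\pi_{v,0})-1<s_\xi(\varrho)-m(\varrho)$ and Proposition~\ref{prop-torsion-Ql} its free part is zero, so nonvanishing is equivalent to having torsion. The key point is that $j_!$ (unlike $j_{!*}$) commutes with $\Fm(-)$, and Proposition~\ref{prop-red-D} gives, in the Grothendieck group of $\overline{\Fm}_l$-perverse sheaves, an identification of $\Fm\bigl(j^{=g_0}_!HT_{\overline{\Zm}_l}(\pi_{v,0},\Pi_1)\bigr)$ with a multiple of $\Fm\bigl(j^{=m(\varrho)g_{-1}}_!HT_{\overline{\Zm}_l}(\pi_{v,-1},\Pi_{m(\varrho)})\bigr)$. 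For a deep enough level the latter has nonzero $\overline{\Qm}_l$-cohomology in degree $s_\xi(\varrho)-m(\varrho)$ by Proposition~\ref{prop-torsion-Ql}, hence nonzero $\overline{\Fm}_l$-cohomology, hence the same for the $\pi_{v,0}$-side; since the free part on the $\pi_{v,0}$-side vanishes, (\ref{eq-sec-torsion0}) gives torsion in $H^{s_\xi(\varrho)-m(\varrho)}(X_{I,\bar s_v},j^{=g_0}_!HT_{\overline{\Zm}_l}(\pi_{v,0},\Pi_1))$. Finally one passes from $j_!$ to $\lexp{p}j_{!*}$ via the spectral sequence (\ref{eq-ss-filt1}) for $\pi_{v,0}$ and the vanishing Lemma~\ref{lem-coho2}, which kills all the higher graded pieces in the relevant range. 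Note that the hypothesis on $s_\xi(\pi_{v,0})$ is used solely to force the $\overline{\Qm}_l$-cohomology of the $\pi_{v,0}$-sheaf to vanish, not to ``isolate an edge map'' as in your sketch.
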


\begin{proof}
Par dualitÈ, on est ramenÈ ‡ prouver que la torsion de 
\addtocounter{smfthm}{1}
\begin{equation} \label{eq-Hi}
H^{s_{\xi}(\pi_{v,-1})-m(\varrho)}(X_{\IC,\bar s_v},\lexp {p} j^{=g_{0}}_{!*} 
HT_{\overline \Zm_l} (\pi_{v,0},\Pi_1))
\end{equation}
est non nulle, o˘ on a posÈ $g_0=m(\varrho)g_{-1}$ de sorte que $\pi_{v,0}$ est une reprÈsentation de 
$GL_{g_0}(F_v)$. Comme par hypothËse $s_{\xi}(\pi_{v,0}) - 1 < s_{\xi}(\pi_{v,-1}) - m(\varrho)$
il rÈsulte de la proposition \ref{prop-torsion-Ql} que la partie libre de (\ref{eq-Hi}) est nulle et on est
donc ramenÈ ‡ montrer que (\ref{eq-Hi}) est non nul.

\begin{lemm} \label{lem-coho2}
Soient $u \geq 0$ et $\pi_{v,u}$ une reprÈsentation irrÈductible cuspidale de $\varrho$-type $u$.
Pour tout $1 \leq t \leq s_u:= \lfloor \frac{d}{g_u} \rfloor$ et pour tout $i > s_{\xi}(\varrho)-tg_u$,
$$H^{i}(X_{I,\bar s_v},\lexp {p} j^{=tg_{u}}_{!*} HT_{\overline \Zm_l} (\pi_{v,u},\Pi_t))$$ 
est nul.
\end{lemm}

\begin{proof}
On raisonne par rÈcurrence sur $t$ de $s_u$ ‡ $1$. Pour $t=s_u$, on a
$\lexp {p} j^{=s_ug_{u}}_{!*} HT_{\overline \Zm_l} (\pi_{v,u},\Pi_{s_u}) \simeq
j^{=s_ug_{u}}_{!} HT_{\overline \Zm_l} (\pi_{v,u},\Pi_{s_u})$ avec, d'aprËs la proposition
\ref{prop-red-D}, dans le groupe de Grothendieck des $\Fm_l$-faisceaux pervers
$$\Fm \bigl ( j^{=s_ug_{u}}_{!} HT_{\overline \Zm_l} (\pi_{v,u},\Pi_{s_u})\bigr ) =
g_u \Fm \bigl (  j^{=s_um(\varrho)l^ug_{-1}}_{!} HT_{\overline \Zm_l} (\pi_{v,-1},\Pi_{s_u})\bigr ).$$
Le rÈsultat dÈcoule du corollaire \ref{coro-torsion1} en utilisant la suite exacte courte (\ref{eq-sec-torsion0}).

Supposons ‡ prÈsent le rÈsultat acquis jusqu'au rang $t+1 \geq 2$. 
Le mÍme raisonnement que dans le cas $t=s_u$, donne la nullitÈ des 
$H^i(X_{I,\bar s_v},j^{=tg_{u}}_{!}  HT_{\overline \Zm_l} (\pi_{v,u},\Pi_t))$
pour tout $i > s_{\xi}(\pi_{v,-1})-tg_u$. On utilise alors la suite spectrale (\ref{eq-ss-filt1}) avec 
$\pi_v=\pi_{v,u}$:
d'aprËs l'hypothËse de rÈcurrence, tous les $E_1^{p,q}$ sont nuls pour $p>0$ et 
$p+q > s_{\xi}(\varrho)-tg_u$, ainsi donc que les $E_\oo^{p+q}$, de sorte qu'il en est de mÍme des 
$E_1^{0,q}$ pour $q > s_{\xi}(\varrho)-tg_u$, d'o˘ le rÈsultat.
\end{proof}

Reprenons la preuve du lemme ci-dessus pour $u=0$ et $t=1$: on a encore la nullitÈ des 
$H^i((X_{I,\bar s_v},\lexp {p} j^{=g_{0}}_{!*} HT_{\overline \Zm_l} (\pi_{v,0},\Pi_1))$ pour tout 
$i>s_{\xi}(\varrho)-m(\varrho)$ en revanche pour $i=s_{\xi}(\varrho)-m(\varrho)$, comme d'aprËs la 
proposition \ref{prop-torsion-Ql}, la partie libre de 
$$H^{s_{\xi}(\varrho)-m(\varrho)}(X_{\IC,\bar s_v}, j^{=m(\varrho)g_{-1}}_{!} 
HT_{\overline \Zm_l} (\pi_{v,-1},\Pi_{m(\varrho)}))$$ 
est non nulle, on en dÈduit que, pour un niveau $I$ assez profond,
$$H^{s_{\xi}(\varrho)-m(\varrho)}(X_{I,\bar s_v},\Fm j^{=m(\varrho)g_{-1}}_{!} 
HT_{\overline \Zm_l} (\pi_{v,-1},\Pi_{m(\varrho)}))$$
et donc
$$H^{s_{\xi}(\varrho)-m(\varrho)}(X_{I,\bar s_v},\Fm j^{=g_{0}}_{!} 
HT_{\overline \Zm_l} (\pi_{v,0},\Pi_{m(\varrho)}))$$ 
est aussi non nul. Or comme par hypothËse $s_{\xi}(\pi_{v,0}) - 1 < s_{\xi}(\varrho) - m(\varrho)$, il dÈcoule 
de la proposition \ref{prop-torsion-Ql}, que la partie libre de 
$$H^{s_{\xi}(\varrho)-m(\varrho)}(X_{I,\bar s_v}, j^{=g_{0}}_{!} 
HT_{\overline \Zm_l} (\pi_{v,0},\Pi_{m(\varrho)}))$$ 
est nulle, le rÈsultat dÈcoule alors de (\ref{eq-sec-torsion0}).
\end{proof}

\rem on notera que la preuve prÈcÈdente montre l'inÈgalitÈ $s_{\xi}(\pi_{v,0}) - 1 
\leq s_{\xi}(\varrho) - m(\varrho)$ ainsi que le fait que $s_{\xi}(\pi_{v,0})$ ne dÈpend que de $\varrho$.
En ce qui concerne le niveau fini $I$, il suffit qu'il soit suffisamment profond de sorte que
la partie libre de 
$H^{s_{\xi}(\varrho)-m(\varrho)}(X_{\IC,\bar s_v}, j^{=m(\varrho)g_{-1}}_{!} 
HT_{\overline \Zm_l} (\pi_{v,-1},\Pi_{m(\varrho)}))$ soit non nulle, i.e. telle qu'il existe $\Pi$
automorphe $\xi$-cohomologique avec 
\begin{itemize}
\item $\Pi_v$ de la forme $\speh_{s_\xi(\varrho)}(\pi_{v,-1}) \times ?$
pour $\pi_{v,-1}$ de rÈduction modulo $l$ isomorphe ‡ $\varrho$, et

\item ayant des vecteurs invariants non nuls sous $I$.
\end{itemize}

\subsection{Rappels sur les filtrations de stratification de $\Psi_{\IC}$}

Afin de filtrer efficacement $\Psi_\IC$, il est plus commode de commencer
par dÈcouper $\Psi_\IC$ selon ses $\varrho$-facteurs directs, cf. \cite{boyer-entier} proposition 2.2.1:
$$\Psi_{\IC} \simeq \bigoplus_{1 \leq g \leq d}\bigoplus_{\varrho \in \scusp_{F_v}(g)} 
\Psi_{\IC,\varrho}$$
o˘ pour tout $\varrho \in \scusp_{F_v}(g)$, le facteur direct $\Psi_{\IC,\varrho}$ est libre
et regroupe, aprËs tensorisation par $\overline \Qm_l$, tous les faisceaux pervers d'Harris-Taylor
associÈs ‡ une reprÈsentation irrÈductible cuspidale $\pi_v$ de $\varrho$-type $u$ pour $u \geq -1$,
au sens de la dÈfinition \ref{defi-varrho-type}.

Rappelons alors la construction de la filtration de stratification de $\Psi_{\IC,\varrho}$ introduite
dans \cite{boyer-torsion}. Pour $1 \leq h <d$, on note 
$X_{\IC,\bar s_v}^{1 \leq h}:=X_{\IC,\bar s_v}^{\geq 1}-X_{\IC,\bar s_v}^{\geq h+1}$ et
$$j^{1 \leq h}:X_{\IC,\bar s_v}^{1\leq h} \hookrightarrow X_{\IC,\bar s_v}^{\geq 1}.$$
On dÈfinit alors
$$\Fil^r_{!}(\Psi_{\IC,\varrho}):=\im_\FC \Bigl ( \lexp {p+} j^{1 \leq r}_! j^{1 \leq r,*} \Psi_{\IC,\varrho}
 \longrightarrow \Psi_{\IC,\varrho} \Bigr ),$$
o˘ l'image est prise dans la catÈgorie des faisceaux pervers libres 
$$\FC=\lexp p \CC(X_{\IC,\bar s_v},\overline \Zm_l) \cap \lexp {p+} \CC(X_{\IC,\bar s_v},\overline \Zm_l),$$
dÈfinie comme l'intersection des faisceaux qui sont pervers pour les deux $t$-structures $p$ et $p+$.
Dans \cite{boyer-duke}, on montre
\begin{itemize}
\item d'une part que les $\Fil^r_{!}(\Psi_{\IC,\varrho})$ sont en fait l'image dans la catÈgorie 
des $p$-faisceaux pervers et

\item chacun des graduÈs $\gr^r_{!}(\Psi_{\IC,\varrho})$ admet une filtration construite comme suit:
on filtre le faisceau localement constant $j^{=r,*} \gr^r_{!}(\Psi_{\IC,\varrho})$ de faÁon ‡ ce
que les graduÈs soient des systËmes locaux d'Harris-Taylor, puis on applique ‡ cette filtration
le foncteur $j^{=r}_!$ et on prend les images successives dans la catÈgorie des 
faisceaux $p$-pervers. Le rÈsultat principal de \cite{boyer-duke}
est alors que les graduÈs obtenus sont sans torsion et sont des $p$-faisceaux
pervers d'Harris-Taylor.
\end{itemize}

Dualement, cf. \cite{boyer-torsion} 2.2.6, on peut dÈfinir
$$\Fil_{*}^{-r}(\Psi_{\IC,\varrho})=\ker_\FC \bigl ( \Psi_{\IC,\varrho} \longrightarrow
\lexp {p} j^{1 \leq r}_* j^{1 \leq r,*} \Psi_{\IC,\varrho} \bigr ).$$ 
On obtient ainsi une filtration
$$0=\Fil_{*}^{-d}(\Psi_{\IC,\varrho}) \subset \Fil_{*}^{1-d}(\Psi_{\IC,\varrho}) \subset \cdots \subset 
\Fil_{*}^0(\Psi_{\IC,\varrho})=\Psi_{\IC,\varrho}$$
telle que, d'aprËs \cite{boyer-duke} par application de la dualitÈ de Grothendieck-Verdier,
\begin{itemize}
\item les $\Fil_{*}^{-r}(\Psi_{\IC,\varrho})$ sont obtenus comme les noyaux dans la catÈgorie des 
$p+$-faisceaux pervers et

\item chacun des graduÈs $\gr_{*}^{-r}(\Psi_{\IC,\varrho})$ est libre et muni d'une filtration dont les graduÈs
sont des $p+$-faisceaux pervers d'Harris-Taylor.
\end{itemize}

\subsection{Torsion dans la cohomologie de $\Psi_{I,\varrho}$}

Dans la suite on fixe
\begin{itemize}
\item $\varrho$ tel que $m(\varrho)=2$ et

\item une reprÈsentation irrÈductible cuspidale $\pi_v$ de 
$\varrho$-type $-1$ avec $s_\xi(\varrho) \geq 4$ maximal i.e. Ègal ‡ $\lfloor \frac{d}{g_{-1}} \rfloor$.

\item Pour $\Pi$ une reprÈsentation automorphe
irrÈductible $\xi$-cohomologique dont la composante locale en $v$ est de la forme
$\speh_{s_\xi(\varrho)} (\pi_v) \times ?$, on fixe un niveau fini $I$ tel que $\Pi$ possËde des
vecteurs invariants sous $I$.
\end{itemize}

\rem comme $s_\xi(\varrho) \geq 3$ est maximal, la proposition \ref{prop-torsion0} s'applique.  
On notera aussi que pour $\varrho$ le caractËre trivial et $\xi$ la reprÈsentation triviale, les hypothËses
prÈcÈdentes sont vÈrifiÈes.

Comme prÈcÈdemment on note $\Psi_{\IC,\varrho,\xi}:=\Psi_{\IC,\varrho} \otimes V_{\overline \Zm_l,\xi}$.

\begin{theo} \label{theo-equivalence2}
Sous les hypothËses prÈcÈdentes et pour un niveau $I$ assez profond, pour un au moins des indices 
$i \in \{ 2-s_\xi(\varrho), 3-s_\xi(\varrho) \}$,
la torsion de $H^i(X_{I,\bar s_v},\Psi_{I,\varrho,\xi})$ est non nulle.
\end{theo}

\rem en ce qui concerne le niveau fini $I$ de l'ÈnoncÈ, il faut qu'il soit tel qu'il existe $\Pi$
automorphe $\xi$-cohomologique avec 
\begin{itemize}
\item $\Pi_v$ de la forme $\speh_{s_\xi(\varrho)}(\pi_{v,-1}) \times ?$
pour $\pi_{v,-1}$ de rÈduction modulo $l$ isomorphe ‡ $\varrho$, et

\item ayant des vecteurs invariants non nuls sous $I$.
\end{itemize}

\begin{proof}
Supposons dans un premier temps qu'il existe une reprÈsentation irrÈductible cuspidale
$\pi_{v,-1}$ de $\varrho$-type $-1$ telle que la torsion de 
$$H^0(X_{I,\bar s_v,\overline{1_h}}^{\geq (s_\xi(\varrho)-1)g_{-1}}, 
\lexp p j^{\geq (s_\xi(\varrho)-1)g_{-1}}_{\overline{1_h},!*}  
HT_{\overline \Zm_l,\xi,\overline{1_h}}(\pi_{v,-1},\Pi_{s_\xi(\varrho)-1}))$$
est non nulle. Rappelons que
$$\Fil_{!}^1(\Psi_{\IC,\varrho,\xi} \otimes_{\overline \Zm_l} \overline \Qm_l ) \simeq
\bigoplus_{\pi_{v,-1} \in \cusp(\varrho,-1)} \Fil_{!,\pi_{v,-1}}^1(\Psi_{\IC,\varrho,\xi}
\otimes_{\overline \Zm_l} \overline \Qm_l )$$
avec $j^{=g_{-1}}_! HT_{\xi,\overline \Qm_l} (\pi_{v,-1},\pi_{v,-1}) \twoheadrightarrow 
\Fil_{!,\pi_{v,-1}}^1(\Psi_{\IC,\varrho,\xi} \otimes_{\overline \Zm_l} \overline \Qm_l )$.

Reprenons alors, ligne ‡ ligne, la preuve du lemme \ref{lem-equivalence} en remplaÁant 
$\lexp p j^{\geq tg_{-1}}_{\overline{1_h},!*}  HT_{\overline \Zm_l,\xi,\overline{1_h}}(\pi_{v,-1},\Pi_t)$
par $\Fil_{!}^1(\Psi_{\IC,\varrho,\xi})$. D'aprËs \cite{boyer-duke}, on a encore une sÈrie de 
suites exactes courtes
$$\begin{array}{ccccc}
P_1 & \hookrightarrow & j^{= g_{-1}}_{!} j^{=g_{-1},*} \Fil_{!}^1(\Psi_{\IC,\varrho,\xi})
& \twoheadrightarrow & \Fil_{!}^1(\Psi_{\IC,\varrho,\xi}) \\
P_2 & \hookrightarrow & j^{=2g_{-1}}_! j^{=2g_{-1},*} P_1 &\twoheadrightarrow & P_1 \\
\cdots & \cdots & \cdots & \cdots & \cdots \\
P_{s_\xi(\varrho)-1} & \hookrightarrow & j^{=(s_\xi(\varrho)-1)g_{-1}}_{!}
 j^{=(s_\xi(\varrho)-1)g_{-1},*} P_{s_\xi(\varrho)-2} & \twoheadrightarrow & P_{s_\xi(\varrho)-2}.
\end{array}$$
Les strates $X^{=tg_{-1}}_{\IC,\bar s_v}$ Ètant affines, on a toujours
$$H^{2-s_\xi(\varrho)}(X^{\geq 1}_{\IC,\bar s_v},\Fil_!^1(\Psi_{\IC,\varrho})) \simeq
H^{3-s_\xi(\varrho)}(X^{\geq 1}_{\IC,\bar s_v}, P_1) \simeq \cdots \simeq
H^{0}(X^{\geq 1}_{\IC,\bar s_v}, P_{s_\xi(\varrho)-2}).$$
De la suite exacte courte
$$0 \rightarrow Q \otimes_{\overline \Zm_l} \overline \Qm_l \longrightarrow 
P_{s_\xi(\varrho)-1} \otimes_{\overline \Zm_l} \overline \Qm_l  \longrightarrow 
\bigoplus_{\pi_{v,-1} \in \cusp(\varrho,-1)} \lexp p j^{=s_\xi(\varrho)g_{-1}}_{!*} HT_{\overline \Qm_l,\xi}
(\pi_{v,-1},\widetilde{\Pi_{s_\xi(\varrho)}} )\rightarrow 0,$$
avec $\widetilde{\Pi_{s_\xi(\varrho)}}:= \speh_{s_\xi(\varrho)}(\pi_{v,-1})$ et o˘
les constituants irrÈductibles de $Q \otimes_{\overline \Zm_l} \overline \Qm_l$
n'ont comme prÈcÈdemment pas de cohomologie, on en dÈduit la suite exacte longue de cohomologie
\begin{multline*}
0 \rightarrow H^{-1} (X^{\geq 1}_{\IC,\bar s_v}, P_{s_\xi(\varrho)-2}) \longrightarrow \\
H^{0} (X^{\geq 1}_{\IC,\bar s_v}, \lexp p j^{=s_\xi(\varrho)g_{-1}}_{!*}  j^{=s_\xi(\varrho)g_{-1},*}
P_{s_\xi(\varrho)-1} \longrightarrow
H^{0} (X^{\geq 1}_{\IC,\bar s_v}, j^{=(s_\xi(\varrho)-1)g_{-1}}_{!}
j^{=(s_\xi(\varrho)-1)g_{-1},*} P_{s_\xi(\varrho)-2}) \\ \longrightarrow 
H^{0} (X^{\geq 1}_{\IC,\bar s_v}, P_{s_\xi(\varrho)-2}) \rightarrow 0,
\end{multline*}
o˘ 
$$\bigl ( j^{=(s_\xi(\varrho)-1)g_{-1},*} P_{s_\xi(\varrho)-2} \bigr )  \otimes_{\overline \Zm_l} \overline \Qm_l 
\simeq \bigoplus_{\pi_{v,-1} \in \cusp(\varrho,-1)} HT_{\overline \Qm_l,\xi}(\pi_{v,-1},
\speh_{s_\xi(\varrho)-1}(\pi_{v,-1})).$$
On a alors
$$0 \rightarrow A \longrightarrow  j^{=(s_\xi(\varrho)-1)g_{-1}}_{!}
j^{=(s_\xi(\varrho)-1)g_{-1},*} P_{s_\xi(\varrho)-2} \longrightarrow
\lexp p  j^{=(s_\xi(\varrho)-1)g_{-1}}_{!*} j^{=(s_\xi(\varrho)-1)g_{-1},*} P_{s_\xi(\varrho)-2} \rightarrow 0$$
avec 
$$\xymatrix{
P_{s_\xi(\varrho)-1} \ar@{^{(}->}[r] \ar@{->>}[d] & A \ar@{->>}[d] \\
B  \ar@{^{(}->}[r] & C
}$$
et o˘
$$B  \otimes_{\overline \Zm_l} \overline \Qm_l \simeq \bigoplus_{\pi_{v,-1} \in \cusp(\varrho,-1)}
\lexp p j^{=s_\xi(\varrho)g_{-1}}_{\overline{1_h},!*} HT_{\overline \Qm_l,\xi}
(\pi_{v,-1},\widetilde{\Pi_{s_\xi(\varrho)}} )$$
et
$$C  \otimes_{\overline \Zm_l} \overline \Qm_l \simeq \bigoplus_{\pi_{v,-1} \in \cusp(\varrho,-1)}
\lexp p j^{=s_\xi(\varrho)g_{-1}}_{!*} HT_{\overline \Zm_l,\xi} (\pi_{v,-1},\Pi_{s_\xi(\varrho)} ),$$
avec 
$$\Pi_{s_\xi(\varrho)} \simeq \speh_{s_\xi(\varrho)-t-1}(\pi_{v,-1} \{ -\frac{1}{2} \}) \times
\pi_{v,-1} \{ \frac{s_\xi(\varrho)-t-2}{2} \}.$$
On se retrouve alors dans la mÍme configuration de la fin de la preuve du lemme \ref{lem-equivalence}
o˘ la flËche
$$H^{0} (X^{\geq 1}_{\IC,\bar s_v}, \lexp p j^{=s_\xi(\varrho)g_{-1}}_{!*} 
HT_{\overline \Zm_l,\xi} (\pi_{v,-1},\widetilde{\Pi_{s_\xi(\varrho)}} )) \longrightarrow
H^{0} (X^{\geq 1}_{\IC,\bar s_v}, j^{=(s_\xi(\varrho)-1)g_{-1}}_{!}
j^{=(s_\xi(\varrho)-1)g_{-1},*} P_{s_\xi(\varrho)-2}) 
$$
n'est pas stricte et o˘ donc la torsion de 
$$H^{2-s_\xi(\varrho)}(X^{\geq 1}_{\IC,\bar s_v},\Fil_{!,\pi_{v,-1}}^1(\Psi_{\IC,\varrho})) \simeq
H^{0}(X^{\geq 1}_{\IC,\bar s_v}, P_{s_\xi(\varrho)-2})$$
est non nulle. Par ailleurs sur $\overline \Qm_l$, les constituants irrÈductibles de
$\Psi_{\IC,\varrho,\xi}/\Fil_!^1(\Psi_{\IC,\varrho,\xi})$ sont des faisceaux pervers d'Harris-Taylor
de la forme $\PC(t,\pi'_v) (n)$ avec
\begin{itemize}
\item soit $\pi'_v$ est de $\varrho$-type $\geq 0$,

\item et sinon $t>1$.
\end{itemize}
Il rÈsulte alors du corollaire \ref{coro-torsion1} et du fait que pour
toute reprÈsentation $\pi_{v,0}$ de $\varrho$-type $0$, on ait $s_\xi(\pi_{v,0})<s_\xi(\varrho)-1$
que les $H^i(X^{\geq 1}_{\IC,\bar s_v},\Psi_{\IC,\varrho}/\Fil_!^1(\Psi_{\IC,\varrho}))$ sont nuls
pour $i<2-s_\xi(\varrho)$ et sans torsion pour $i=2-s_\xi(\varrho)$, de sorte que la torsion de
$H^{2-s_\xi(\varrho)}(X^{\geq 1}_{\IC,\bar s_v},\Psi_{\IC,\varrho})$, est non nulle.

\medskip

On suppose ‡ prÈsent que pour tout $\pi_{v,-1} \in \cusp(\varrho,-1)$, la torsion de 
$$H^0(X_{I,\bar s_v,\overline{1_h}}^{\geq (s_\xi(\varrho)-1)g_{-1}}, 
\lexp p j^{\geq (s_\xi(\varrho)-1)g_{-1}}_{\overline{1_h},!*}  
HT_{\overline \Zm_l,\xi,\overline{1_h}}(\pi_v,\Pi_{s_\xi(\varrho)-1}))$$
est nulle de sorte que d'aprËs la proposition \ref{prop-equivalence0}, il en est de 
mÍme pour tous les 
$$H^0(X_{I,\bar s_v}^{\geq tg_{-1}}, \lexp p j^{\geq tg_{-1}}_{!*}  
HT_{\overline \Zm_l,\xi}(\pi_v,\Pi_t)).$$

\begin{lemm}
La torsion de 
$H^{s_\xi(\varrho)-1}(X^{\geq 1}_{I,\bar s_v},\Fil_!^1(\Psi_{\IC,\varrho,\xi}))$ est nulle.
\end{lemm}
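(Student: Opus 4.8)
Le plan est de calculer $H^{s_\xi(\varrho)-1}(X^{\geq 1}_{\IC,\bar s_v},\Fil_!^1(\Psi_{\IC,\varrho,\xi}))$ gr\^ace \`a la filtration de $\Fil_!^1(\Psi_{\IC,\varrho,\xi})$ construite dans \cite{boyer-duke}, dont les gradu\'es sont des faisceaux pervers d'Harris-Taylor sans torsion. Comme la restriction de $\Fil_!^1(\Psi_{\IC,\varrho,\xi})$ \`a la strate $X^{=g_{-1}}_{\IC,\bar s_v}$ est, apr\`es $\otimes_{\overline\Zm_l}\overline\Qm_l$, isomorphe \`a $\bigoplus_{\pi_{v,-1}\in\cusp(\varrho,-1)}HT_{\overline\Qm_l,\xi}(\pi_{v,-1},\pi_{v,-1})$, l'\'egalit\'e (\ref{egalite-hij}) identifie ces gradu\'es, pour chaque $\pi_{v,-1}$, aux $\lexp p j^{=tg_{-1}}_{!*}HT_{\overline\Zm_l,\xi}(\pi_{v,-1},\Pi^{(t)})$ pour $t=1,\dots,s_\xi(\varrho)$, avec $\Pi^{(t)}$ de type Steinberg, celui d'indice $t=1$ --- soit $\bigoplus_{\pi_{v,-1}}\lexp p j^{=g_{-1}}_{!*}HT_{\overline\Zm_l,\xi}(\pi_{v,-1},\pi_{v,-1})$ --- \'etant le quotient sommital.

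Ensuite je regarderais la suite spectrale de cette filtration, dont le terme $E_1$ est la somme, sur $\pi_{v,-1}$ et $t$, des $H^\bullet(X^{\geq 1}_{\IC,\bar s_v},\lexp p j^{=tg_{-1}}_{!*}HT_{\overline\Zm_l,\xi}(\pi_{v,-1},\Pi^{(t)}))$, et qui aboutit \`a $H^\bullet(X^{\geq 1}_{\IC,\bar s_v},\Fil_!^1(\Psi_{\IC,\varrho,\xi}))$. D'apr\`es le corollaire \ref{coro-torsion1} et sa remarque (annulation pour $i>s_\xi(\varrho)-t$) ainsi que, par dualit\'e via la proposition \ref{prop-pp}, l'annulation pour $i<t-s_\xi(\varrho)$, le terme d'indice $t$ a sa cohomologie concentr\'ee dans $|i|\leq s_\xi(\varrho)-t$ ; en degr\'e total $s_\xi(\varrho)-1$ seul l'indice $t=1$ peut donc contribuer, et uniquement en son degr\'e cohomologique maximal. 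Ce gradu\'e \'etant le quotient sommital, aucune diff\'erentielle ne rencontre le terme $E_1$ correspondant --- les sources et buts en jeu vivent en degr\'e total $s_\xi(\varrho)-2$ ou $s_\xi(\varrho)$ dans des gradu\'es d'indice $t\geq 2$, o\`u la cohomologie est nulle --- de sorte que
$$H^{s_\xi(\varrho)-1}(X^{\geq 1}_{\IC,\bar s_v},\Fil_!^1(\Psi_{\IC,\varrho,\xi}))\simeq\bigoplus_{\pi_{v,-1}\in\cusp(\varrho,-1)}H^{s_\xi(\varrho)-1}(X^{\geq 1}_{\IC,\bar s_v},\lexp p j^{=g_{-1}}_{!*}HT_{\overline\Zm_l,\xi}(\pi_{v,-1},\pi_{v,-1})).$$
Il suffit alors de montrer que chaque facteur est sans torsion.

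Pour cela, je reviendrais aux strates restreintes et j'encha\^{\i}nerais les \'equivalences de la proposition \ref{prop-equivalence0}. Fixant $\pi_{v,-1}\in\cusp(\varrho,-1)$, l'induction g\'eom\'etrique ram\`ene la torsion de $H^{s_\xi(\varrho)-1}(X^{\geq 1}_{\IC,\bar s_v},\lexp p j^{=g_{-1}}_{!*}HT_{\overline\Zm_l,\xi}(\pi_{v,-1},\pi_{v,-1}))$ \`a celle de $H^{s_\xi(\varrho)-1}(X^{\geq g_{-1}}_{\IC,\bar s_v,\overline{1_{g_{-1}}}},\lexp p j^{\geq g_{-1}}_{\overline{1_{g_{-1}}},!*}HT_{\overline\Zm_l,\xi,\overline{1_{g_{-1}}}}(\pi_{v,-1},\pi_{v,-1}))$, laquelle, via le d\'evissage de \cite{boyer-duke} (le noyau de $j^{=g_{-1}}_!HT\to\lexp p j^{=g_{-1}}_{!*}HT$ n'ayant que des constituants d'indice $t\geq 2$, sans cohomologie en degr\'es $s_\xi(\varrho)-1$ et $s_\xi(\varrho)$), co\"{\i}ncide avec la torsion de $H^{s_\xi(\varrho)-1}(X^{\geq g_{-1}}_{\IC,\bar s_v,\overline{1_{g_{-1}}}},j^{\geq g_{-1}}_{\overline{1_{g_{-1}}},!}HT_{\overline\Zm_l,\xi,\overline{1_{g_{-1}}}}(\pi_{v,-1},\Pi_1))$. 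La proposition \ref{prop-equivalence0} avec $t=1$ relie celle-ci \`a la torsion de $H^1(X^{\geq(s_\xi(\varrho)-1)g_{-1}}_{\IC,\bar s_v,\overline{1_{g_{-1}}}},j^{\geq(s_\xi(\varrho)-1)g_{-1}}_{\overline{1_{g_{-1}}},!}HT_{\overline\Zm_l,\xi,\overline{1_{g_{-1}}}}(\pi_{v,-1},\Pi_{s_\xi(\varrho)-1}))$, qui, par le m\^eme d\'evissage (o\`u le noyau se r\'eduit au seul constituant de support maximal $t=s_\xi(\varrho)$, concentr\'e en degr\'e $0$), vaut la torsion de $H^1(X^{\geq(s_\xi(\varrho)-1)g_{-1}}_{\IC,\bar s_v,\overline{1_{g_{-1}}}},\lexp p j^{\geq(s_\xi(\varrho)-1)g_{-1}}_{\overline{1_{g_{-1}}},!*}HT_{\overline\Zm_l,\xi,\overline{1_{g_{-1}}}}(\pi_{v,-1},\Pi_{s_\xi(\varrho)-1}))$. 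Enfin, la proposition \ref{prop-pp} identifie cette extension interm\'ediaire \`a sa $(p+)$-extension interm\'ediaire, donc la dualit\'e de Grothendieck-Verdier \'echange sa torsion en degr\'e $1$ avec celle, en degr\'e $0$, d'une extension interm\'ediaire d'Harris-Taylor associ\'ee \`a une repr\'esentation cuspidale de $\varrho$-type $-1$, nulle par l'hypoth\`ese en vigueur dans ce cas (ind\'ependante de la partie infinit\'esimale, cf. la remarque suivant \ref{prop-equivalence0}). En remontant toutes ces \'equivalences, on obtient la nullit\'e de la torsion cherch\'ee.

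L'obstacle principal est ce dernier paragraphe : il faut propager l'information de torsion \`a travers les d\'evissages de \cite{boyer-duke} et de \cite{boyer-invent2}, et contr\^oler le d\'ecalage $i\mapsto 1-i$ de la dualit\'e de Grothendieck-Verdier, afin que le $H^{s_\xi(\varrho)-1}$ du faisceau d'indice $t=1$ soit finalement command\'e par le $H^0$ du faisceau d'indice $t=s_\xi(\varrho)-1$, c'est-\`a-dire pr\'ecis\'ement par la quantit\'e dont l'hypoth\`ese de ce cas garantit la libert\'e ; une fois les intervalles d'annulation en main, l'argument par suite spectrale du deuxi\`eme paragraphe est routinier.
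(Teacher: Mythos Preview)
Your proof is correct and follows essentially the same route as the paper's. Both arguments set up a spectral sequence computing $H^\bullet(\Fil^1_!(\Psi_{\IC,\varrho,\xi}))$, observe that in total degree $s_\xi(\varrho)-1$ only the $t=1$ graded piece contributes, and then show that $H^{s_\xi(\varrho)-1}$ of this piece is torsion-free by combining the hypothesis of the ambient case, the equivalence of proposition~\ref{prop-equivalence0}~/ lemma~\ref{lem-equivalence}, and Grothendieck--Verdier duality through proposition~\ref{prop-pp}. The paper uses the spectral sequence attached to the resolution by the $j^{=(q+1)g_{-1}}_!j^{=(q+1)g_{-1},*}P_q$'s, whereas you use directly the filtration of $\Fil^1_!$ by $p$-intermediate extensions; this is a cosmetic difference, as is the order in which you apply duality (you dualise at the end, the paper dualises earlier).

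Two small inaccuracies that do not affect the argument: the reference to (\ref{egalite-hij}) is not quite right --- that identity describes the constituents of $j^{=tg_{-1}}_!HT(\pi_{v,-1},\Pi_t)$, not of $\Fil^1_!(\Psi_{\IC,\varrho,\xi})$, whose infinitesimal parts $\Pi^{(t)}$ are of Speh rather than Steinberg type (cf.\ the description of $j^{=(s_\xi(\varrho)-1)g_{-1},*}P_{s_\xi(\varrho)-2}$ earlier in the proof of the theorem). This is harmless since your vanishing ranges depend only on the index $t$, not on $\Pi^{(t)}$. Likewise, the displayed direct sum over $\pi_{v,-1}\in\cusp(\varrho,-1)$ is a priori only an iterated extension on $\overline\Zm_l$; but since the outgoing differentials (to degree $s_\xi(\varrho)$) vanish, each $E_\infty$-term is a \emph{submodule} of the corresponding $E_1$-term, and submodules of torsion-free modules are torsion-free, so the conclusion stands.
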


\begin{proof}
On reprend les suites exactes courtes de la preuve du thÈorËme \ref{theo-equivalence2} que
l'on Ècrit sous la forme de triangles distinguÈs comme suit
$$
j^{= g_{-1}}_{!}  j^{=g_{-1},*} \Fil_!^1(\Psi_{\IC,\varrho,\xi})  \longrightarrow 
\Fil_!^1(\Psi_{\IC,\varrho,\xi})  \longrightarrow P_1[1] \leadsto $$
$$ j^{=2g_{-1}}_{!} j^{=2g_{-1},*} P_1[1] \longrightarrow  P_1[1] \longrightarrow P_2[2] \leadsto$$
$$ \cdots \cdots$$
$$j^{=(s_\xi(\varrho)-1)g_{-1}}_{!} j^{=(s_\xi(\varrho)-1)g_{-1},*} P_{s_\xi(\varrho)-2}[s_\xi(\varrho)-2] 
\longrightarrow P_{s_\xi(\varrho)-2}[s_\xi(\varrho)-2] \longrightarrow 
 P_{s_\xi(\varrho)-1}[s_\xi(\varrho)-1] \leadsto$$
On obtient alors une suite spectrale 
$$E_2^{p,q} \Rightarrow H^{p+q}(X^{\geq 1}_{\IC,\bar s_v},\Fil^1_!(\Psi_{\IC,\varrho,\xi}))$$
o˘ en posant $P_0=j^{=g_{-1}}_!  j^{=g_{-1},*} \Fil_!^1(\Psi_{\IC,\varrho,\xi})$ et en notant que
la cohomologie de $P_{s_\xi(\varrho)-1}[s_\xi(\varrho)-1]$ est, d'aprËs \ref{coro-torsion1}, Ègale ‡ celle de 
$j^{=s_\xi(\varrho)g_{-1}}_{!} j^{=s_\xi(\varrho)g_{-1},*} P_{s_\xi(\varrho)-1}[s_\xi(\varrho)-1]$,
les $E_2^{p,q}$ sont nuls sauf si $0 \leq q \leq s_\xi(\varrho)-1$ auquel cas on a
$$E_2^{p,q}=H^{p+2q}(X^{\geq 1}_{\IC,\bar s_v},j^{=(q+1)g_{-1}}_!j^{=(q+1)g_{-1},*} P_q).$$
Par hypothËse et en utilisant le lemme \ref{lem-equivalence}, la torsion de
$$H^{2-s_\xi(\varrho)}(X_{I,\bar s_v}^{\geq g_{-1}}, 
\lexp p j^{\geq g_{-1}}_{!*}  HT_{\overline \Zm_l,\xi}(\pi_v,\pi_v))$$
est nulle alors par dualitÈ la torsion de $H^{s_\xi(\varrho)-1}(X_{I,\bar s_v}^{\geq g_{-1}}, 
\lexp p j^{\geq g_{-1}}_{!*}  HT_{\overline \Zm_l,\xi}(\pi_v,\pi_v))$ est nulle de sorte, qu'en utilisant le dÈbut 
de la preuve de la proposition \ref{prop-equivalence0}, la torsion de 
$H^{s_\xi(\varrho)-1}(X_{I,\bar s_v}^{\geq g_{-1}}, j^{\geq g_{-1}}_! HT_{\overline \Zm_l,\xi}(\pi_v,\pi_v))$
est nulle, et donc puisque
$$\bigl ( j^{=g_{-1},*} P_0=j^{=g_{-1},*} \Fil^1_!(\Psi_{\IC,\varrho,\xi}) \bigr )  \otimes_{\overline \Zm_l} 
\overline \Qm_l \simeq \bigoplus_{\pi_{v,-1} \in \cusp(\varrho,-1)} 
HT_{\overline \Qm_l,\xi}(\pi_{v,-1},\pi_{v,-1}),$$
$E_2^{s_\xi(\varrho)-1,0}$ est libre.
On conclut en notant que, d'aprËs le corollaire \ref{coro-torsion1}, les $E_2^{p,q}$ pour $q \geq 1$ et
$p+q=s_\xi(\varrho)-2$ sont nuls.

\end{proof}

\begin{lemm} Pour tout sous-$GL_d(F_v)$-module $\Pi_v$ de 
$H^{s_\xi(\varrho)-2}(X^{\geq 1}_{\IC,\bar s_v},\Fil^1_!(\Psi_{\IC,\varrho,\xi}))$,
le support supercuspidal de sa rÈduction modulo $l$ ne contient aucun segment de
Zelevinsky de longueur $s_\xi(\varrho)$ relativement ‡ $\varrho$.
\end{lemm}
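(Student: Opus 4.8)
Le principe est de calculer $H^{s_\xi(\varrho)-2}(X^{\geq 1}_{\IC,\bar s_v},\Fil_!^1(\Psi_{\IC,\varrho,\xi}))$ au niveau de sa structure de $GL_d(F_v)$-module, puis d'en lire le support supercuspidal en $v$. Comme $\Fil_!^1(\Psi_{\IC,\varrho,\xi})$ est un $\overline\Zm_l$-faisceau pervers libre, on a l'isomorphisme $H^{s_\xi(\varrho)-2}(X^{\geq 1}_{\IC,\bar s_v},\Fil_!^1(\Psi_{\IC,\varrho,\xi}))\otimes_{\overline\Zm_l}\overline\Qm_l\simeq H^{s_\xi(\varrho)-2}(X^{\geq 1}_{\IC,\bar s_v},\Fil_!^1(\Psi_{\IC,\varrho,\xi})\otimes_{\overline\Zm_l}\overline\Qm_l)$, et d'apr\`es le principe de Brauer-Nesbitt la r\'eduction modulo $l$ de tout sous-module $\Pi_v$ ne voit que sa classe sur $\overline\Qm_l$, laquelle est sous-quotient de celle de $H^{s_\xi(\varrho)-2}(X^{\geq 1}_{\IC,\bar s_v},\Fil_!^1(\Psi_{\IC,\varrho,\xi})\otimes\overline\Qm_l)=\bigoplus_{\pi_{v,-1}\in\cusp(\varrho,-1)}H^{s_\xi(\varrho)-2}(X^{\geq 1}_{\IC,\bar s_v},\Fil_{!,\pi_{v,-1}}^1(\Psi_{\IC,\varrho,\xi})\otimes\overline\Qm_l)$. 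Comme $\pi_{v,-1}$ est de $\varrho$-type $-1$, sa r\'eduction modulo $l$ est la repr\'esentation supercuspidale $\varrho$, de sorte que la r\'eduction modulo $l$ d'un segment de Zelevinsky de longueur $a$ relativement \`a $\pi_{v,-1}$ en est un de m\^eme longueur relativement \`a $\varrho$; il suffit donc de majorer par $s_\xi(\varrho)-1$ la longueur des segments relatifs \`a $\varrho$ apparaissant dans le support supercuspidal en $v$ des constituants irr\'eductibles de $H^{s_\xi(\varrho)-2}(X^{\geq 1}_{\IC,\bar s_v},\Fil_{!,\pi_{v,-1}}^1(\Psi_{\IC,\varrho,\xi})\otimes\overline\Qm_l)$.

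On reprend alors, pour $\pi_{v,-1}$ fix\'e, la suite spectrale $E_2^{p,q}=H^{p+2q}(X^{\geq 1}_{\IC,\bar s_v},j^{=(q+1)g_{-1}}_! j^{=(q+1)g_{-1},*}P_q)\Rightarrow H^{p+q}(X^{\geq 1}_{\IC,\bar s_v},\Fil_{!,\pi_{v,-1}}^1(\Psi_{\IC,\varrho,\xi}))$ de la preuve du lemme pr\'ec\'edent, o\`u $P_0=j^{=g_{-1}}_! HT_{\overline\Zm_l,\xi}(\pi_{v,-1},\pi_{v,-1})$ et o\`u, d'apr\`es \cite{boyer-duke}, $j^{=(q+1)g_{-1},*}P_q\simeq HT_{\overline\Zm_l,\xi}(\pi_{v,-1},\Pi_{q+1})$ pour une repr\'esentation $\Pi_{q+1}$ de $GL_{(q+1)g_{-1}}(F_v)$ de type Speh construite sur des twists de $\pi_{v,-1}$. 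D'apr\`es l'\'egalit\'e $(\ref{egalite-hij})$, chaque $j^{=(q+1)g_{-1}}_! HT_{\overline\Zm_l,\xi}(\pi_{v,-1},\Pi_{q+1})$ est, dans le groupe de Grothendieck des faisceaux pervers, une somme de termes $\lexp p j^{=kg_{-1}}_{!*}HT_{\overline\Zm_l,\xi}(\pi_{v,-1},-)$ avec $k\geq q+1$, dont la cohomologie, d'apr\`es le lemme \ref{lem-coho1} joint \`a l'autodualit\'e de la proposition \ref{prop-pp}, est nulle en dehors des degr\'es $|j|\leq s_\xi(\varrho)-k$. Comme $s_\xi(\varrho)\geq 4$, la contrainte $s_\xi(\varrho)-2+q\leq s_\xi(\varrho)-(q+1)$ force $q=0$; la suite spectrale d\'eg\'en\`ere donc en degr\'e total $s_\xi(\varrho)-2$ (les $d_r$ pour $r\geq 3$ partant de termes nuls pour la m\^eme raison), et $H^{s_\xi(\varrho)-2}(X^{\geq 1}_{\IC,\bar s_v},\Fil_{!,\pi_{v,-1}}^1(\Psi_{\IC,\varrho,\xi}))$ s'identifie au conoyau de la diff\'erentielle $d_2\colon E_2^{s_\xi(\varrho)-4,1}\longrightarrow E_2^{s_\xi(\varrho)-2,0}$.

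Il reste \`a d\'ecomposer $E_2^{s_\xi(\varrho)-2,0}=H^{s_\xi(\varrho)-2}(X^{\geq 1}_{\IC,\bar s_v},j^{=g_{-1}}_! HT_{\overline\Zm_l,\xi}(\pi_{v,-1},\pi_{v,-1}))$ via $(\ref{egalite-hij})$: seuls les termes de taille $k=1$ et $k=2$ y contribuent, celui de taille $1$ donnant en degr\'e $s_\xi(\varrho)-2$ un $GL_d(F_v)$-module dont les constituants ont pour composante locale en $v$ une repr\'esentation de la forme $\speh_{s_\xi(\varrho)-1}(\pi_{v,-1})\times\tau$ avec $\tau$ sans twist non trivial de $\varrho$ dans son support supercuspidal, tandis que celui de taille $2$, dans son degr\'e extr\^eme $s_\xi(\varrho)-2$, donne un module de composante locale $\speh_{s_\xi(\varrho)}(\pi_{v,-1})\times\tau'$, cf. \cite{boyer-imj} et \cite{boyer-invent2}. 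La m\^eme analyse montre que $E_2^{s_\xi(\varrho)-4,1}=H^{s_\xi(\varrho)-2}(X^{\geq 1}_{\IC,\bar s_v},j^{=2g_{-1}}_! HT_{\overline\Zm_l,\xi}(\pi_{v,-1},\Pi_2))$ est enti\`erement constitu\'e de cette contribution de taille $2$. L'obstacle principal est alors de v\'erifier que $d_2$ est un isomorphisme sur le facteur $\speh_{s_\xi(\varrho)}(\pi_{v,-1})$ de $E_2^{s_\xi(\varrho)-2,0}$, c'est-\`a-dire que le passage de $j^{=g_{-1}}_! HT_{\overline\Zm_l,\xi}(\pi_{v,-1},\pi_{v,-1})$ au sous-quotient $\Fil_{!,\pi_{v,-1}}^1(\Psi_{\IC,\varrho,\xi})$ tue exactement la partie $\speh_{s_\xi(\varrho)}$ en degr\'e $s_\xi(\varrho)-2$, de mani\`ere duale au fait que cette m\^eme partie est au contraire conserv\'ee en degr\'e $2-s_\xi(\varrho)$ o\`u elle est d\'etect\'ee dans $H^0(X^{\geq 1}_{\IC,\bar s_v},P_{s_\xi(\varrho)-2})$ comme dans la premi\`ere partie de la preuve du th\'eor\`eme \ref{theo-equivalence2}; ceci r\'esulte de la compatibilit\'e des filtrations de stratification construites dans \cite{boyer-duke}. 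Cela \'etant, $H^{s_\xi(\varrho)-2}(X^{\geq 1}_{\IC,\bar s_v},\Fil_{!,\pi_{v,-1}}^1(\Psi_{\IC,\varrho,\xi}))$ n'a pour composante locale en $v$ que des repr\'esentations de la forme $\speh_{s_\xi(\varrho)-1}(\pi_{v,-1})\times\tau$, de sorte que le plus long segment relatif \`a $\varrho$ dans le support supercuspidal de sa r\'eduction modulo $l$ est de longueur $s_\xi(\varrho)-1<s_\xi(\varrho)$, d'o\`u le r\'esultat.
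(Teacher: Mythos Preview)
Your argument has a genuine gap at the very first step. You write that ``d'apr\`es le principe de Brauer--Nesbitt la r\'eduction modulo $l$ de tout sous-module $\Pi_v$ ne voit que sa classe sur $\overline{\Qm}_l$'', but this is only valid for \emph{free} $\overline{\Zm}_l$-submodules: if $\Pi_v$ lies in the torsion of $H^{s_\xi(\varrho)-2}(X^{\geq 1}_{\IC,\bar s_v},\Fil^1_!(\Psi_{\IC,\varrho,\xi}))$, then $\Pi_v \otimes_{\overline{\Zm}_l} \overline{\Qm}_l = 0$ while $r_l(\Pi_v) \neq 0$, and the $\overline{\Qm}_l$-picture tells you nothing about its supercuspidal support. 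Recall that this lemma is stated under the running hypothesis of the second case of the theorem's proof, namely that for every $\pi_{v,-1}\in\cusp(\varrho,-1)$ the torsion of $H^0$ of $\lexp p j^{\geq (s_\xi(\varrho)-1)g_{-1}}_{\overline{1_h},!*} HT_{\overline{\Zm}_l,\xi,\overline{1_h}}(\pi_{v,-1},\Pi_{s_\xi(\varrho)-1})$ vanishes. The paper uses precisely this hypothesis to deduce that $H^{s_\xi(\varrho)-2}(X^{\geq 1}_{\IC,\bar s_v},\Fil^1_!(\Psi_{\IC,\varrho,\xi}))$ is itself torsion-free, after which the passage to $\overline{\Qm}_l$ is legitimate. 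You never invoke this hypothesis, so your argument as written does not cover torsion submodules.

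Once torsion-freeness is established, the paper's route is also more direct than yours. Rather than running the spectral sequence of the previous lemma and then having to argue that $d_2$ kills exactly the $\speh_{s_\xi(\varrho)}$-contribution --- a point you yourself flag as ``l'obstacle principal'' and then dispose of with a vague reference to \cite{boyer-duke} --- the paper observes that over $\overline{\Qm}_l$ the surjection $\Fil^1_!(\Psi_{\IC,\varrho,\xi}) \twoheadrightarrow \bigoplus_{\pi_{v,-1}} \lexp p j^{=g_{-1}}_{!*} HT_{\overline{\Qm}_l,\xi}(\pi_{v,-1},\pi_{v,-1})$ has a kernel whose constituents do not contribute in degree $s_\xi(\varrho)-2$ by lemme~\ref{lem-coho1}. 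Hence $H^{s_\xi(\varrho)-2}(\Fil^1_!)$ is filtered with gradeds $H^{s_\xi(\varrho)-2}(\lexp p j^{=g_{-1}}_{!*} HT(\pi_{v,-1},\pi_{v,-1}))$, whose $v$-components are directly of the form $\speh_{s_\xi(\varrho)-1}(\pi_{v,-1}) \times ?$; the conclusion then follows from the maximality of $s_\xi(\varrho)$ together with the disjointness of the cuspidal supports of $\speh_{s_\xi(\varrho)-1}(\varrho)$ and $\speh_{s_\xi(\varrho)}(\varrho)$. Your spectral-sequence detour through $j^{=g_{-1}}_!$ rather than $\lexp p j^{=g_{-1}}_{!*}$ creates the extra $\speh_{s_\xi(\varrho)}$-term that you must then cancel, and the cancellation is not justified.
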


\begin{proof}
Rappelons que sur $\overline \Qm_l$, on a une surjection
$$\Fil^1_!(\Psi_{\IC,\varrho,\xi}) \otimes_{\overline \Zm_l} \overline \Qm_l 
\twoheadrightarrow \bigoplus_{\pi_{v,-1} \in \cusp(\varrho,-1)}
\lexp p j^{=g_{-1}}_{!*} HT_{\overline \Qm_l,\xi}(\pi_{v,-1},\pi_{v,-1})$$
o˘ tous les constituants du noyau sont des faisceaux pervers d'Harris-Taylor de la forme
$HT_{\overline \Qm_l,\xi}(\pi_{v,-1},\st_t(\pi_{v,-1})) \otimes \Xi^{\frac{t-1}{2}}$ avec $t \geq 1$. 
Ainsi d'aprËs le lemme \ref{lem-coho1},
$H^{s_\xi(\varrho)-2}(X^{\geq 1}_{\IC,\bar s_v},\Fil^1_!(\Psi_{\IC,\varrho,\xi}))$ admet une filtration
dont les graduÈs sont de la forme
$H^{s_\xi(\varrho)-2}(X^{\geq 1}_{\IC,\bar s_v},\lexp p j^{=g_{-1}}_{!*} 
HT_{\overline \Zm_l,\xi}(\pi_{v,-1},\pi_{v,-1}))$.
En particulier, il dÈcoule de l'hypothËse faite plus haut, que la torsion de 
$H^{s_\xi(\varrho)-2}(X^{\geq 1}_{\IC,\bar s_v},\Fil^1_!(\Psi_{\IC,\varrho,\xi}))$
est nulle de sorte que la propriÈtÈ se lit sur $\overline \Qm_l$. Or sur $\overline \Qm_l$,
$H^{s_\xi(\varrho)-2}(X^{\geq 1}_{\IC,\bar s_v},\lexp p j^{=g_{-1}}_{!*} 
HT_{\overline \Qm_l,\xi}(\pi_{v,-1},\pi_{v,-1}))$
est de la forme $\Pi^{\oo,v} \otimes \bigl ( \speh_{s_\xi(\varrho)-1}(\pi_{v,-1}) \times ?)$.
Le rÈsultat dÈcoule alors de la maximalitÈ de $s_\xi(\varrho)$ et du fait que
le support cuspidal de $\speh_{s_\xi(\varrho)-1}(\varrho)$ est disjoint
de celui de $\speh_{s_\xi(\varrho)}(\varrho)$.
\end{proof}

Rappelons que $\Psi_{\IC,\varrho}/\Fil_!^1(\Psi_{\IC,\varrho,\xi})$ est ‡ support dans
$X^{\geq 2g_{-1}}_{\IC,\bar s_v}$ et que puisque $m(\varrho)=2$,
\begin{multline*}
\bigl ( j^{=2g_{-1},*} \Psi_{\IC,\varrho,\xi}/\Fil_!^1(\Psi_{\IC,\varrho,\xi}) \bigr ) \otimes_{\overline \Zm_l}
\overline \Qm_l \simeq \bigoplus_{\pi_{v,-1} \in \cusp(\varrho,-1)} HT(\pi_{v,-1},\st_2(\pi_{v,-1}))
\otimes \Xi^{-1/2} \\
\oplus \bigoplus_{\pi_{v,0} \in \cusp(\varrho,0)} HT(\pi_{v,0},\pi_{v,0}).
\end{multline*}
On considËre alors une filtration entiËre 
$$A_0 \hookrightarrow  j^{=2g_{-1},*}  \Psi_{\IC,\varrho,\xi}/\Fil_!^1(\Psi_{\IC,\varrho,\xi}) 
\twoheadrightarrow A_{-1}$$
avec $A_0$ et $A_{-1}$ libres et tels que
$$A_0 \otimes_{\overline \Zm_l} \overline \Qm_l \simeq \bigoplus_{\pi_{v,0} \in \cusp(\varrho,0)} 
HT(\pi_{v,0},\pi_{v,0}),$$
et on note $P_0$ l'image de 
$$j^{=2g_{-1}}_! A_0 \hookrightarrow j^{=2g_{-1}}_! j^{=2g_{-1},*}  
\Psi_{\IC,\varrho,\xi}/\Fil_!^1(\Psi_{\IC,\varrho,\xi}) \longrightarrow \Psi_{\IC,\varrho,\xi}
/\Fil_!^1(\Psi_{\IC,\varrho,\xi}).$$
D'aprËs \cite{boyer-duke}, 
$$0 \rightarrow P_0  \longrightarrow \Psi_{\IC,\varrho,\xi}/\Fil_!^1(\Psi_{\IC,\varrho,\xi}) \longrightarrow
P \rightarrow 0$$
est une suite exacte courte de faisceaux pervers libres. D'aprËs la proposition \ref{prop-torsion0},
la torsion de $H^{s_\xi(\varrho)-2}(X^{\geq 1}_{\IC,\bar s_v},P_0)$ est non nulle et sa rÈduction modulo
$l$ est de la forme $\varrho_0 \times \speh_{s_\xi(\varrho)-2}(\varrho) \times ?$ et contient
donc un segment de Zelevinsky de longueur $s_\xi(\varrho)$ relativement ‡ $\varrho$.
Il rÈsulte alors des deux lemmes prÈcÈdents que cette torsion est encore un sous-module
de torsion de  $H^{s_\xi(\varrho)-2} (X^{\geq 1}_{\IC,\bar s_v},Q)$ o˘ $Q$ est dÈfini comme
le produit fibrÈ suivant:
$$\xymatrix{
\Fil^1_!(\Psi_{\IC,\varrho,\xi}) \ar@{^{(}->}[r] \ar@{=}[d] & Q \ar@{^{(}-->}[d] \ar@{-->>}[r] &
P_0 \ar@{^{(}->}[d] \\
\Fil^1_!(\Psi_{\IC,\varrho,\xi}) \ar@{^{(}->}[r]  & \Psi_{\IC,\varrho,\xi} \ar@{->>}[r]  \ar@{->>}[d] 
& \Psi_{\IC,\varrho,\xi}/ \Fil^1_!(\Psi_{\IC,\varrho,\xi}) \ar@{->>}[d] \\
& P \ar@{=}[r] & P
}$$
En ce qui concerne $P$, il admet une filtration de stratification dont les constituants irrÈductibles,
sur $\overline \Qm_l$, sont ceux de $\Psi_{\IC,\varrho,\xi}$ auxquels on enlËve en particulier ceux de la
forme $\lexp p j^{g}_{!*} HT(\pi_{v},\pi_{v}))$ pour $\pi_v \in \cusp(\varrho,-1) \cup \cusp(\varrho,0)$.
On s'intÈresse alors
‡ $H^{s_\xi(\varrho)-3}(X^{\geq 1}_{\IC,\bar s_v},P)$.
CommenÁons par son quotient libre que l'on peut Ècrire sous la forme
$$H^{s_\xi(\varrho)-3}(X^{\geq 1}_{\IC,\bar s_v},P)=\bigoplus_{\Pi} H[\Pi]$$
o˘ $\Pi$ dÈcrit les classes d'Èquivalences des reprÈsentations automorphes cohomologiques pour
$\xi$ et o˘ $H[\Pi]$ dÈsigne la composante isotypique correspondante. La composante locale en
$v$ d'une telle reprÈsentation automorphe est de la forme $\speh_s(\pi_v)$ avec 
$\pi_v$ une reprÈsentation tempÈrÈe, i.e. de la forme $\st_{t_1}(\pi_{v,1}) \times \cdots \times
\st_{t_r}(\pi_{v,r})$ avec pour $i=1,\cdots,r$, $\pi_{v,i}$ irrÈductible cuspidale et o˘ on rappelle que 
$\speh_s(\pi_v)$
correspond, via la correspondance de Langlands locale ‡ la somme directe $\sigma(\frac{1-s}{2}) \oplus
\cdots \oplus \sigma (\frac{s-1}{2})$ o˘ $\sigma$ correspond ‡ $\pi_v$. Pour que $H[\Pi]$
soit non nulle il faut, d'aprËs \cite{boyer-compositio}, que $s \geq s_\xi(\varrho)-2$. Par ailleurs
comme rappelÈ au lemme \ref{lem-coho1}, ne contribuent ‡ $H[\Pi]$ que la cohomologie de
\begin{itemize}
\item[(i)] $HT(\pi_{v,-1},\st_3(\pi_{v,-1}))$ pour $\Pi$ tel que $\Pi_v \simeq \speh_{s_\xi(\varrho)}(\pi_v)$
et $\pi_v \simeq \pi_{v,-1} \times ?$ avec $\pi_{v,-1} \in \cusp(\varrho,-1)$;

\item[(ii)] $HT(\pi_{v,-1},\st_2(\pi_{v,-1}))$ pour $s=s_\xi(\varrho)-1$.
\end{itemize}
Or pour les contributions de (i) disparaissent dans $H[\Pi]$ alors que celles de (ii) ont, aprËs
rÈduction modulo $l$, un support supercuspidal disjoint de celui de $\speh_{s_\xi(\varrho)}(\varrho)$
et ne nous intÈressent pas.

Enfin en ce qui concerne la torsion de $H^{s_\xi(\varrho)-3}(X^{\geq 1}_{\IC,\bar s_v},P)$,
d'aprËs ce qui prÈcËde, elle admet une filtration dont tous les graduÈs sont des quotients issus des
contributions (i) prÈcÈdentes, i.e. en tant que $GL_d(F_v)$-module, ce sont des quotients de 
$[\overrightarrow{s_\xi(\varrho)-3},\overleftarrow{2}]_{\pi_{v,-1}} \times ?$.
D'aprËs le lemme suivant, la rÈduction modulo $l$ de ces graduÈs
ne contient jamais $\varrho_0 \times \speh_{s_\xi(\varrho)-2}(\varrho) \times ?$ de sorte que la torsion
de $H^{s_\xi(\varrho)-2}(X^{\geq 1}_{\IC,\bar s_v},\Psi_{\IC,\varrho,\xi})$ est bien non nulle.

\end{proof}

\begin{lemm}
Pour tout $s \geq 4$, la rÈduction modulo $l$ de 
$[\overrightarrow{s_\xi(\varrho)-4},\overleftarrow{3}]_{\pi_{v,-1}}$
n'admet pas comme sous-quotient $\speh_{s_\xi(\varrho)-2}(\varrho) \times \varrho_0$.
\end{lemm}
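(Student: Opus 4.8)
Le plan est le suivant. D'abord expliciter la notation : $[\overrightarrow{s_\xi(\varrho)-4},\overleftarrow{3}]_{\pi_{v,-1}}$ est la $\overline \Qm_l$-repr�sentation irr�ductible en escalier de $GL_{s_\xi(\varrho)g_{-1}}(F_v)$ associ�e, au sens de Zelevinsky, au multisegment d�crivant le long de la droite de $\pi_{v,-1}$ un chemin de branche montante de longueur $s_\xi(\varrho)-4$ puis de branche descendante (de type Steinberg) de longueur $3$ ; en particulier elle vit sur le m�me groupe $GL_{s_\xi(\varrho)g_{-1}}(F_v)$ que la cible $\speh_{s_\xi(\varrho)-2}(\varrho)\times\varrho_0$, cette derni�re �tant l'induite depuis le Levi $GL_{(s_\xi(\varrho)-2)g_{-1}}(F_v)\times GL_{g_0}(F_v)$ avec $g_0=m(\varrho)g_{-1}=2g_{-1}$. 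Son support cuspidal $\underline\Delta$ est alors un multi-ensemble de $s_\xi(\varrho)$ twists entiers $\pi_{v,-1}\{i\}$ support� par un intervalle de longueur $s_\xi(\varrho)-3$ de cette droite, les trois points provenant de la branche descendante y �tant de multiplicit� deux.

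Comme $m(\varrho)=2$, la droite de Zelevinsky de $\varrho$ n'a que les deux points $\{\varrho,\varrho\{1\}\}$ et l'on a $r_l(\pi_{v,-1}\{i\})\simeq r_l(\pi_{v,-1}\{i+2\})$ ; le support supercuspidal de chaque constituant irr�ductible de $r_l\bigl([\overrightarrow{s_\xi(\varrho)-4},\overleftarrow{3}]_{\pi_{v,-1}}\bigr)$ est donc l'image \og repli�e \fg{} $r_l(\underline\Delta)$. Du c�t� de la cible, $\speh_{s_\xi(\varrho)-2}(\varrho)$ a pour support supercuspidal un segment de Zelevinsky de longueur $s_\xi(\varrho)-2$ relativement � $\varrho$ et $\varrho_0$ un segment de longueur $m(\varrho)=2$ ; ces deux segments �tant accol�s — c'est ce qui justifiait plus haut que $\speh_{s_\xi(\varrho)-2}(\varrho)\times\varrho_0$ contient un segment de Zelevinsky de longueur $s_\xi(\varrho)$ — le support supercuspidal de la cible, apr�s repliement, compte $\lceil s_\xi(\varrho)/2\rceil$ copies de l'un des deux points et $\lfloor s_\xi(\varrho)/2\rfloor$ de l'autre. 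Un premier d�compte, qui fait intervenir l'hypoth�se $s_\xi(\varrho)\geq 4$, la position du repliement et la localisation des trois points doubles de $\underline\Delta$, montre que selon la parit� de $s_\xi(\varrho)$ l'une des deux multiplicit�s de $r_l(\underline\Delta)$ est strictement inf�rieure � la multiplicit� requise : dans ces cas, $\speh_{s_\xi(\varrho)-2}(\varrho)\times\varrho_0$ ne peut �tre sous-quotient de $r_l\bigl([\overrightarrow{s_\xi(\varrho)-4},\overleftarrow{3}]_{\pi_{v,-1}}\bigr)$.

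Pour les cas r�siduels (o� les deux supports supercuspidaux sont �gaux), je proc�derais par modules de Jacquet. Si $\speh_{s_\xi(\varrho)-2}(\varrho)\times\varrho_0$ �tait sous-quotient de $r_l\bigl([\overrightarrow{s_\xi(\varrho)-4},\overleftarrow{3}]_{\pi_{v,-1}}\bigr)$, alors, par exactitude du foncteur de Jacquet le long du parabolique de Levi $GL_{(s_\xi(\varrho)-2)g_{-1}}(F_v)\times GL_{g_0}(F_v)$, sa compatibilit� � la r�duction modulo $l$ et le lemme g�om�trique (le facteur cuspidal $\varrho_0$ ne peut venir que \og entier \fg{} du second bloc), $\speh_{s_\xi(\varrho)-2}(\varrho)\otimes\varrho_0$ serait sous-quotient de $r_l$ du module de Jacquet correspondant de $[\overrightarrow{s_\xi(\varrho)-4},\overleftarrow{3}]_{\pi_{v,-1}}$. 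Ce module de Jacquet est explicite (somme multiplicit� un sur les \og d�coupes \fg{} admissibles du chemin $\underline\Delta$) ; pour chaque composante $\tau_1\otimes\tau_2$ dont la r�duction $r_l(\tau_2)$ contient $\varrho_0$ — ce qui, d'apr�s \cite{vigneras-induced} et \cite{dat-jl} (et la proposition \ref{prop-red-D}), impose � $\tau_2$ d'�tre un $\st_2$ d'un rel�vement plac� en $[-1/2,1/2]$, ou l'une des rares autres repr�sentations de support deux twists de $\pi_{v,-1}$ se repliant sur un segment li� — on v�rifie que le facteur compl�mentaire $\tau_1$ n'a pas $\speh_{s_\xi(\varrho)-2}(\varrho)$ dans sa r�duction modulo $l$, le reste de $\underline\Delta$ (de masse $s_\xi(\varrho)-2$ mais avec multiplicit�s et linkage impos�s par la branche descendante de longueur $3$) ayant alors un type incompatible avec celui de $\speh_{s_\xi(\varrho)-2}(\varrho)$.

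La principale difficult� sera le contr�le combinatoire de $r_l$ de la repr�sentation en escalier lorsque $m(\varrho)=2$ : suivre strate par strate comment les segments de $\underline\Delta$ se recoupent et se replient, et recenser exactement ceux qui produisent le cuspidal \og nouveau \fg{} $\varrho_0$, tout en gardant le contr�le sur le fait que la somme des longueurs des deux branches, $(s_\xi(\varrho)-4)+3=s_\xi(\varrho)-1$, reste strictement inf�rieure � $s_\xi(\varrho)$, ce qui emp�che d'isoler simultan�ment un bloc $\varrho_0$ et un bloc $\speh_{s_\xi(\varrho)-2}(\varrho)$ accol�s.
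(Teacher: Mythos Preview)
Your plan heads in the right direction (Jacquet modules and compatibility with $r_l$), but it is considerably more involved than the paper's argument, and the supercuspidal--support counting step is not reliable: since $m(\varrho)=2$, after folding onto $\{\varrho,\varrho\{1\}\}$ both representations have total mass $s_\xi(\varrho)$ split between the two points, and a parity/position argument alone will not always separate them --- you yourself anticipate ``cas r\'esiduels''. Your fallback, taking the Jacquet module along the Levi $GL_{(s_\xi(\varrho)-2)g_{-1}}\times GL_{g_0}$ and chasing where $\varrho_0$ can arise, forces a genuine case analysis of the d\'ecoupes of the ladder, which is exactly the ``contr\^ole combinatoire'' you flag as the main difficulty.

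The paper avoids all of this by choosing a \emph{smaller} parabolic: it computes $J_{P^{op}_{g_{-1},s_\xi(\varrho)g_{-1}}}$, i.e.\ it pulls off a single supercuspidal factor on the $GL_{g_{-1}}$ side. For the ladder $[\overrightarrow{s_\xi(\varrho)-4},\overleftarrow{3}]_{\pi_{v,-1}}$ this Jacquet module has a \emph{unique} possible first factor, namely $\pi_{v,-1}\{\frac{s_\xi(\varrho)-1}{2}-3\}$ (the single extremal point of the multisegment on that side). On the other hand, $\speh_{s_\xi(\varrho)-2}(\varrho)\times\varrho_0$ is irreducible, so Frobenius reciprocity and one more step of Jacquet give that $\varrho':=\varrho\{\frac{s_\xi(\varrho)-1}{2}\}$ must occur as a first factor of its $J_{P^{op}_{g_{-1}}}$. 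Now the whole lemma reduces to the single observation
\[
r_l\Bigl(\pi_{v,-1}\bigl\{\tfrac{s_\xi(\varrho)-1}{2}-3\bigr\}\Bigr)\;=\;\varrho'\{-3\}\;\simeq\;\varrho'\{1\}\;\not\simeq\;\varrho',
\]
the last non-isomorphism because $m(\varrho)=2$. So the odd shift $3$ is what makes the argument work instantly, with no case split on the parity of $s_\xi(\varrho)$ and no need to track $\varrho_0$ at all.

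In short: your larger-parabolic strategy can presumably be pushed through, but the paper's choice of the minimal parabolic $P^{op}_{g_{-1}}$ collapses the problem to a one-line comparison of two twists of $\varrho$. If you rewrite your proof, I would drop the multiplicity count entirely and switch to this parabolic.
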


\begin{proof}
Rappelons que par hypohtËse $\varrho \not \simeq \varrho \{ 1 \}$ et $\varrho \simeq \varrho \{ 2 \}$.
Notons $\varrho':= \varrho \{ \frac{s_\xi(\varrho)-1}{2} \}$. Comme
$ \speh_{s_\xi(\varrho)-2}(\varrho) \times \varrho_0$ est aussi irrÈductible et donc isomorphe
‡ la mÍme induite mais relativement au sous-groupe de Levi opposÈ, 
de la rÈciprocitÈ de Frobenius, on en dÈduit une surjection
$$J_{P^{op}_{(s_\xi(\varrho)-2)g_{-1},s_x(\varrho)g_{-1}}(F_v)}(\speh_{s_\xi(\varrho)-2}(\varrho) 
\times \varrho_0) \twoheadrightarrow \speh_{s_\xi(\varrho)-2}(\varrho) \otimes \varrho_0,$$
et donc en composant avec $J_{P^{op}_{g_{-1},(s_\xi(\varrho)-2)g_{-1},s_\xi(\varrho)g_{-1}}(F_v)}$,
il existe une reprÈsentation irrÈductible $\tilde \varrho$ que l'on ne souhaite pas prÈciser, telle que
$\varrho' \otimes \tilde \varrho$ est un constituant de $J_{P^{op}_{g_{-1},s_\xi(\varrho)g_{-1}}(F_v)}
(\speh_{s_\xi(\varrho)-2}(\varrho) \times \varrho_0)$.

En revanche tous les constituants irrÈductibles de $J_{P^{op}_{g_{-1},s_\xi(\varrho)g_{-1}}(F_v)}
([\overrightarrow{s_\xi(\varrho)-4},\overleftarrow{3}]_{\pi_{v,-1}})$ sont de la forme
$\pi_{v,-1} \{ \frac{s_\xi(\varrho)-1}{2} - 3 \}$ et donc, par compatibilitÈ de la rÈduction modulo $l$
avec les foncteurs de Jacquet, et comme $\varrho' \not \simeq \varrho' \{ 1 \}=
r_l(\pi_{v,-1} \{ \frac{s_\xi(\varrho)-1}{2} - 3 \})$, on en dÈduit bien que 
$\speh_{s_\xi(\varrho)-2}(\varrho) \times \varrho_0$ n'est pas un sous-quotient de la rÈduction
modulo $l$ de $[\overrightarrow{s_\xi(\varrho)-4},\overleftarrow{3}]_{\pi_{v,-1}}$.

\end{proof}

\section{Congruences automorphes}
\label{para-principal}

\renewcommand{\theequation}{\arabic{section}.\arabic{smfthm}}
\renewcommand{\thesmfthm}{\arabic{section}.\arabic{smfthm}}

Dans ce paragraphe, le niveau fini $I$ considÈrÈ sera supposÈ maximal ‡ la place $v$.

\subsection{AlgËbres de Hecke}

\begin{nota} \label{nota-spl2}
Pour $I \in \IC$ un niveau fini, soit
$$\Tm_I:=\overline \Zm_l \bigl [T_{w,i}:~w \in \Spl(I) \hbox{ et } i=1,\cdots,d \bigr ],$$
l'algËbre de Hecke associÈe ‡ $\Spl(I)$, o˘ 
 $T_{w,i}$ est la fonction caractÈristique de
$$GL_d(\OC_w) \diag(\overbrace{\varpi_w,\cdots,\varpi_w}^{i}, \overbrace{1,\cdots,1}^{d-i} ) 
GL_d(\OC_w) \subset  GL_d(F_w).$$
\end{nota}

Pour tout $w \in \Spl(I)$, on note
$$P_{\mathfrak{m},w}(X):=\sum_{i=0}^d(-1)^i q_w^{\frac{i(i-1)}{2}} \overline{T_{w,i}} X^{d-i} \in \overline 
\Fm_l[X]$$
le polynÙme de Hecke associÈ ‡ $\mathfrak m$ et 
$$S_{\mathfrak{m}}(w) := \bigl \{ \lambda \in \Tm_I/\mathfrak m \simeq \overline \Fm_l \hbox{ tel que }
P_{\mathfrak{m},w}(\lambda)=0 \bigr \} ,$$
le multi-ensemble des paramËtres de Satake modulo $l$ en $w$ associÈs ‡ $\mathfrak m$.
Avec les notations prÈcÈdentes, l'image $\overline{T_{w,i}}$ de $T_{w,i}$ dans  $\Tm_I/\mathfrak m$
s'Ècrit 
$$\overline{T_{w,i}}=q_w^{\frac{i(1-i)}{2}} \sigma_i (\lambda_1,\cdots, \lambda_d)$$
o˘ $S_{\mathfrak m}(w)=\{ \lambda_1,\cdots,\lambda_d \}$ et $\sigma_i$ dÈsigne la $i$-Ëme fonction
symÈtrique ÈlÈmentaire. 

\begin{nota} \label{nota-mdual}
On notera alors $\mathfrak m^\vee$ l'idÈal maximal de $\Tm_I$ dÈfini par
$$T_{w,i} \in \Tm_I \mapsto q_w^{\frac{i(1-i)}{2}} \sigma_i (\lambda_1^{-1},\cdots,\lambda_d^{-1})
\in \overline \Fm_l.$$
\end{nota}

On reprend ‡ prÈsent les arguments de \cite{boyer-imj}.

\begin{defi}
On dira d'un $\Tm_{I}$-module $M$ qu'il vÈrifie la propriÈtÈ \textbf{(P)},
s'il admet une filtration finie 
$$(0)=\Fil^0(M) \subset \Fil^1(M) \cdots \subset \Fil^r(M)=M$$
telle que pour tout $k=1,\cdots,r$, il existe
\begin{itemize}
\item une reprÈsentation automorphe $\Pi_k$ irrÈductible et entiËre
de $G(\Am)$, apparaissant dans la cohomologie de $X_{\IC,\overline \eta_v}$ ‡ coefficients dans 
$V_{\xi,\overline \Qm_l}$, et telle que la composante locale $\Pi_{k,v}$ de $\Pi_k$ est ramifiÈe,
i.e. $(\Pi_{k,v})^{GL_d(\OC_v)}=(0)$;

\item une reprÈsentation irrÈductible entiËre et non ramifiÈe $\widetilde \Pi_{k,v}$ de mÍme support 
cuspidal que $\Pi_{k,v}$

\item et un $\Tm_{I}$-rÈseau stable $\Gamma$ de 
$(\Pi^{\oo,v}_k)^{I^v} \otimes \widetilde \Pi_{k,v}^{GL_d(\OC_v)}$ tel que
\begin{itemize}
\item soit $\gr^k(M)$ est libre et isomorphe ‡ $\Gamma$,

\item soit $\gr^k(M)$ est de torsion et un sous-quotient de $\Gamma/\Gamma'$ pour 
$\Gamma' \subset \Gamma$ un deuxiËme $\Tm_I$-rÈseau stable.
\end{itemize}
\end{itemize}
Le graduÈ $\gr^k(M)$ sera dit de type $i$ si en outre $\Pi_{k,v}$ est de la forme
$$\st_i(\chi) \times \chi_1 \times \cdots \times \chi_{d-i}$$
o˘ $\chi,\chi_1,\cdots,\chi_{d-i}$ sont des caractËres non ramifiÈs.
\end{defi}

\rem La propriÈtÈ \textbf{(P)} est clairement stable par extensions, par sous-quotients et, en remplaÁant la 
condition $\xi$-cohomologique par $\xi^\vee$-cohomologique, par dualitÈ.

\begin{lemm} Soient $h \geq 1$ et $M$ un sous-quotient irrÈductible de
$H^{d-h}_c(X^{=h}_{I,\bar s_v},V_{\xi,\overline \Qm_l})$
(resp. de $H^{d-h}(X^{\geq h}_{I,\bar s_v},V_{\xi,\overline \Qm_l})$). Alors 
\begin{itemize}
\item soit $M$ vÈrifie la propriÈtÈ \textbf{(P)} et est alors de type $h$ ou $h+1$ (resp. de type $h$)

\item soit $M$ n'est pas un sous-quotient de 
$H^{d-h-1}(X^{\geq h+1}_{I,\bar s_v},V_{\xi,\overline \Qm_l})$.
\end{itemize}
\end{lemm}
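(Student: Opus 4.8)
The plan is a descending induction on $h$, run simultaneously for the two assertions. Write $j_h\colon X^{=h}_{I,\bar s_v}\hookrightarrow X^{\geq h}_{I,\bar s_v}$ for the open immersion of the Newton stratum and $i_{h+1}\colon X^{\geq h+1}_{I,\bar s_v}\hookrightarrow X^{\geq h}_{I,\bar s_v}$ for the closed complement. The recollement exact triangle relating $j_{h,!}j_h^{*}V_\xi$, $V_\xi$ and $i_{h+1,*}i_{h+1}^{*}V_\xi$, together with the affineness of the smooth pure-dimensional stratum $X^{=h}_{I,\bar s_v}$ (whence $H^j_c(X^{=h}_{I,\bar s_v},V_\xi)=0$ for $j<d-h$), produces the exact sequence
$$H^{d-h-1}(X^{\geq h+1}_{I,\bar s_v},V_\xi)\xrightarrow{\partial}H^{d-h}_c(X^{=h}_{I,\bar s_v},V_\xi)\longrightarrow H^{d-h}(X^{\geq h}_{I,\bar s_v},V_\xi)\longrightarrow H^{d-h}(X^{\geq h+1}_{I,\bar s_v},V_\xi)$$
of $\Tm_I\times GL_d(F_v)$-modules. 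For $h=d$ one has $X^{\geq d+1}_{I,\bar s_v}=\emptyset$, so $H^{d-h-1}(X^{\geq h+1}_{I,\bar s_v},V_\xi)=0$ and the second alternative holds for every $M$; this initializes the induction. For $h<d$, assume both assertions for all indices $>h$, let $M$ be an irreducible subquotient of $H^{d-h}_c(X^{=h}_{I,\bar s_v},V_\xi)$ (resp. of $H^{d-h}(X^{\geq h}_{I,\bar s_v},V_\xi)$), and assume $M$ is a subquotient of $H^{d-h-1}(X^{\geq h+1}_{I,\bar s_v},V_\xi)$, there being nothing to prove otherwise.

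The first step reduces both occurrences of $M$ to the cohomology of a single Newton stratum. By the spectral sequence of the stratification $X^{\geq h+1}_{I,\bar s_v}=\coprod_{k\geq h+1}X^{=k}_{I,\bar s_v}$, whose $E_1$-terms are the groups $H^{\bullet}_c(X^{=k}_{I,\bar s_v},V_\xi)$, the hypothesis that $M$ is a subquotient of $H^{d-h-1}(X^{\geq h+1}_{I,\bar s_v},V_\xi)$ forces $M$ to be a subquotient of $H^{d-h-1}_c(X^{=k}_{I,\bar s_v},V_\xi)$ for some $k\geq h+1$; similarly, for the ``$\geq$''-variant, $M$ is a subquotient of $H^{d-h}_c(X^{=k}_{I,\bar s_v},V_\xi)$ for some $k\geq h$. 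Hence $M$ is at the same time a constituent of the bottom-degree compactly supported cohomology $H^{d-h}_c(X^{=h}_{I,\bar s_v},V_\xi)$ of one affine stratum and a constituent of the group $H^{d-h-1}_c(X^{=k}_{I,\bar s_v},V_\xi)$, which sits in cohomological degree $k-h-1$ above the bottom of $X^{=k}_{I,\bar s_v}$, of another (resp. of $H^{d-h}_c(X^{=k}_{I,\bar s_v},V_\xi)$, in degree $k-h$ above the bottom).

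It remains --- and this is the core of the argument --- to exploit these two occurrences via the Harris--Taylor/Boyer description of the cohomology of the Newton strata recalled in \S\ref{para-rappel-coho}. Decomposing along the cuspidal datum, namely a cuspidal $\overline\Qm_l$-representation $\pi_v$ of some $GL_g(F_v)$ with necessarily $g\mid h$ and $g\mid k$, the fact that $M$ sits at the bottom of $H^{\bullet}_c(X^{=h}_{I,\bar s_v},V_\xi)$ constrains, through Proposition~\ref{prop-torsion-Ql} and Corollary~\ref{coro-torsion1} applied to each cuspidal datum, the length of the generalized Speh factor relative to $\pi_v$ that the local component at $v$ of the contributing $\xi$-cohomological automorphic representations must carry, while the fact that $M$ sits in degree $k-h-1$ above the bottom of $H^{\bullet}_c(X^{=k}_{I,\bar s_v},V_\xi)$ constrains it differently. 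Comparing the two, and using the explicit shape of the graded pieces of the stratification filtrations of \S\ref{para-rappel-coho}, one sees that the two constraints are jointly satisfiable only by the ``boundary'' constituents, for which $g=1$ --- so $\pi_v$ is an unramified character, since a cuspidal representation of $GL_g(F_v)$ with $g\geq 2$ is ramified --- and $k=h+1$ (resp. $k\in\{h,h+1\}$); in that case the local components at $v$ of the relevant automorphic representations are forced to be of the form $\st_h(\chi)\times(\text{unr.})$ or $\st_{h+1}(\chi)\times(\text{unr.})$ (resp. only $\st_h(\chi)\times(\text{unr.})$) with $\chi$ an unramified character. Choosing a Hecke-stable lattice in the corresponding $(\Pi^{\oo,v})^{I^v}\otimes\widetilde\Pi_v^{GL_d(\OC_v)}$, where $\widetilde\Pi_v$ is the unramified constituent with the same (unramified) cuspidal support as $\Pi_v$, exhibits $M$ as a subquotient of a module verifying~(P) of type $h$ or $h+1$ (resp. of type $h$); since (P) and the type are stable under passage to subquotients, $M$ itself verifies~(P) and is of type $h$ or $h+1$ (resp. of type $h$). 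The main obstacle is exactly this last numerical and representation-theoretic bookkeeping: one must check that the ``interior'' constituents of $H^{d-h}_c(X^{=h}_{I,\bar s_v},V_\xi)$ --- those carrying at $v$ a Speh factor of \emph{maximal} admissible length, or a cuspidal datum on $GL_g(F_v)$ with $g\geq 2$ --- genuinely do not reappear in $H^{d-h-1}(X^{\geq h+1}_{I,\bar s_v},V_\xi)$, which is what keeps the dichotomy of the statement from being vacuous, and where the devissages of \cite{boyer-torsion,boyer-duke} and the degree estimates of \S\ref{para-rappel-coho} are used in an essential way.
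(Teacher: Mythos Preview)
Your inductive framework is superfluous: you set up the descending induction on $h$, but at no point do you invoke the hypothesis for indices $>h$; the reduction to a single stratum via the stratification spectral sequence is independent of it. More seriously, the heart of your argument --- the paragraph beginning ``Comparing the two\ldots'' --- is an assertion, not a proof. You claim that the constraints from Proposition~\ref{prop-torsion-Ql} and Corollary~\ref{coro-torsion1} force $g=1$ and $k=h+1$, but those results concern the groups $H^i_{\xi,!}(tg_{-1},\pi_{v,-1})$ attached to a \emph{fixed} cuspidal datum; they say nothing directly about the decomposition of $H^{d-h}_c(X^{=h}_{I,\bar s_v},V_{\xi,\overline\Qm_l})$ itself, nor do they explain why a constituent of case-(ii) type (Speh of length exactly $h$) cannot reappear in $H^{d-h-1}(X^{\geq h+1}_{I,\bar s_v},V_{\xi,\overline\Qm_l})$. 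Your final sentence in fact concedes that this ``bookkeeping'' is the main obstacle and leaves it to the cited d\'evissages.

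The paper's proof is not inductive at all: it simply quotes the explicit isotypic decomposition of $H^{d-h}_c(X^{=h}_{\IC,\bar s_v},V_{\xi,\overline\Qm_l})$ from \cite{boyer-compositio} (cf.\ \cite{boyer-aif} \S3.2--3.3). For each $\xi$-cohomological $\Pi$, the $\Pi^{\oo,v}$-isotypic piece, as a $GL_d(F_v)$-module, is computed and falls into three cases according to whether $\Pi_v$ has a Steinberg factor $\st_r(\chi_v)$ with $r\geq h$, a Speh factor $\speh_r(\chi_v)$ with $r\geq h$, or neither. Taking $GL_d(\OC_v)$-invariants (recall $I$ is maximal at $v$) kills case~(iii) outright; in case~(i) the surviving piece is $(\speh_h(\chi\{\frac{h-r}{2}\})\times\st_{r-h}(\chi\{\frac{h}{2}\}))\times\pi'_v$, which has $GL_d(\OC_v)$-invariants only if $r-h\in\{0,1\}$ and $\pi'_v$ is unramified, whence type $h$ or $h+1$; in case~(ii) with $r=h$ one checks directly against the analogous description of $H^{d-h-1}(X^{\geq h+1}_{I,\bar s_v},V_{\xi,\overline\Qm_l})$ that the constituent does not recur. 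What you are missing is precisely this explicit input: without it, the dichotomy cannot be established by degree considerations alone.
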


\begin{proof}
Le rÈsultat dÈcoule des calculs explicites de ces $\overline \Qm_l$-groupes de cohomologie
en niveau infini donnÈs dans \cite{boyer-compositio} et on renvoie le lecteur ‡ la prÈsentation
qui en est faite au \S 3.3 (resp. au \S 3.2) de \cite{boyer-aif}. PrÈcisÈment d'aprËs loc. cit.,
pour $\Pi^\oo$ une reprÈsentation irrÈductible de $G(\Am^\oo)$, la composante isotypique 
$H^{d-h}_c(X^{=h}_{\IC,\bar s_v},V_\xi \otimes_{\overline \Zm_l} \overline \Qm_l) \{ \Pi^{\oo,v} \}$
est nulle  si $\Pi^\oo$ n'est pas la composante hors de $\oo$ d'une reprÈsentation automorphe
$\xi$-cohomologique $\Pi$. Sinon, on distingue trois situations pour la composante locale en $v$ de $\Pi$:
\begin{itemize}
\item[(i)] soit $\Pi_v \simeq \st_r(\chi_v) \times \pi'_v$ avec $h \leq r \leq d$,

\item[(ii)] soit $\Pi_v \simeq \speh_r(\chi_v) \times \pi'_v$ avec $h \leq r \leq d$,

\item[(iii)] soit $\Pi_v$ n'est pas d'une des deux formes prÈcÈdentes,
\end{itemize}
o˘ dans ce qui prÈcËde, $\chi_v$ est un caractËre non ramifiÈ de $F_v^\times$ et $\pi'_v$ 
une reprÈsentation irrÈductible admissible de $GL_{d-h}(F_v)$.
Alors cette composante isotypique, en tant que reprÈsentation de $GL_d(F_v) \times \Zm$, est
de la forme 
\begin{itemize}
\item dans le cas (i), ‡ $\bigl ( \speh_h(\chi \{ \frac{h-r}{2} \} ) \times \st_{r-h}(\chi \{ \frac{h}{2} \} ) \bigr ) 
\times \pi'_v \otimes \Xi^{\frac{r-h}{2}}$ (resp. nulle si $r \neq h$ et isomorphe ‡
$\st_h(\chi_v)$ sinon);

\item nulle dans le cas (ii) si $r \neq h$ et sinon isomorphe ‡ $\speh_h(\chi_v) \times \pi'_v$.

\item Enfin, dans le cas (iii), la reprÈsentation de $GL_d(F_v)$ obtenue n'aura pas d'invariants sous
$GL_d(\OC_v)$.
\end{itemize}
Ainsi en prenant les invariants sous $I$, dans le cas (i) on obtient un $\Tm_I$-module vÈrifiant
la propriÈtÈ \textbf{(P)} et qui est de type $h$ ou $h+1$. Dans le cas (ii), le $\Tm_I$-module
obtenu n'est pas un sous-quotient de $H^{d-h-1}(X^{\geq h+1}_{I,\bar s_v},V_{\xi,\Qm_l})$ et le cas (iii)
ne fournit rien.

\end{proof}

\subsection{Torsion dans la cohomologie et congruences automorphes}

Supposons ‡ prÈsent qu'il existe $i$ tel que la torsion de $H^i(X_{I,\bar \eta_v},V_\xi)$ est non nulle.
Soit alors $1 \leq h \leq d$ maximal tel qu'il existe $i$ pour lequel le sous-espace de torsion
$H^i(X^{\geq h}_{I,\bar s_v},V_\xi)_{tor}$ de $H^i(X^{\geq h}_{I,\bar s_v},V_\xi)$ est non nul.
Notons que
\begin{itemize}
\item comme $X^{\geq d}_{I,\bar s_v}$ est de dimension nulle, alors $h<d$;

\item d'aprËs le changement de base lisse, $H^i(X_{I,\bar \eta_v},V_\xi) \simeq H^i(X^{\geq 1}_{I,\bar s_v},
V_\xi)$ de sorte que $h \geq 1$.
\end{itemize}

\begin{lemm} Avec les notations ci-dessus, le plus petit indice $i$ tel que la torsion de 
$H^i(X^{\geq h}_{I,\bar s_v},V_\xi)$ est non nulle est $i=0$ et alors tout sous-module irrÈductible
de $H^0(X^{\geq h}_{I,\bar s_v},V_\xi)_{tor}$ vÈrifie la propriÈtÈ \textbf{(P)} en Ètant de type $h+1$.
\end{lemm}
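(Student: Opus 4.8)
L'outil principal sera la suite exacte longue d'excision attachée à l'immersion fermée $i_{h+1}:X^{\geq h+1}_{I,\bar s_v}\hookrightarrow X^{\geq h}_{I,\bar s_v}$, d'ouvert complémentaire $j^{=h}:X^{=h}_{I,\bar s_v}\hookrightarrow X^{\geq h}_{I,\bar s_v}$. On appliquera $R\Gamma(X^{\geq h}_{I,\bar s_v},-)$ au triangle distingué $j^{=h}_!j^{=h,*}V_\xi\longrightarrow V_\xi\longrightarrow i_{h+1,*}i_{h+1}^*V_\xi\leadsto$: comme $X^{\geq h}_{I,\bar s_v}$ et $X^{\geq h+1}_{I,\bar s_v}$ sont projectifs, $R\Gamma$ y coïncide avec $R\Gamma_c$ et l'on obtient une suite exacte longue reliant les $H^\bullet_c(X^{=h}_{I,\bar s_v},V_\xi)$, les $H^\bullet(X^{\geq h}_{I,\bar s_v},V_\xi)$ et les $H^\bullet(X^{\geq h+1}_{I,\bar s_v},V_\xi)$. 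Par maximalité de $h$, les $H^j(X^{\geq h+1}_{I,\bar s_v},V_\xi)$ sont sans torsion; la torsion de $H^i(X^{\geq h}_{I,\bar s_v},V_\xi)$ est donc contenue dans l'image de $H^i_c(X^{=h}_{I,\bar s_v},V_\xi)$, et c'est plus précisément la torsion du conoyau du morphisme de bord issu du module sans torsion $H^\bullet(X^{\geq h+1}_{I,\bar s_v},V_\xi)$.

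Pour la localisation du degré, on utilisera que $X^{=h}_{I,\bar s_v}$ est affine de dimension $d-h$: avec la normalisation des degrés en usage ici, $H^i_c(X^{=h}_{I,\bar s_v},V_\xi)=0$ pour $i<0$, donc $H^i(X^{\geq h}_{I,\bar s_v},V_\xi)$ est sans torsion pour $i<0$. La dualité de Poincaré-Verdier — en tenant compte de l'autodualité de la proposition \ref{prop-pp} et de la stabilité par passage de $\xi$ à $\xi^\vee$ — échange, à un décalage de $1$ près comme au début de la preuve de la proposition \ref{prop-equivalence0}, la torsion de $H^i(X^{\geq h}_{I,\bar s_v},V_\xi)$ avec celle de $H^{1-i}(X^{\geq h}_{I,\bar s_v},V_{\xi^\vee})$; jointe à l'annulation précédente, elle confine la torsion aux degrés $0$ et $1$. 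Le recours aux suites exactes (\ref{eq-sec-torsion0}) et aux annulations modulo $l$ (corollaire \ref{coro-torsion1}) montrera alors que la torsion apparaît déjà au bas de cet intervalle, soit en $i=0$; comme elle est non nulle par définition même de $h$, $i=0$ est bien le plus petit tel indice.

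Pour l'énoncé automorphe, on repartira des calculs explicites rappelés dans la preuve du lemme précédent (cf. \cite{boyer-aif}): $H^0_c(X^{=h}_{I,\bar s_v},V_\xi)$ est un $\Tm_I$-module vérifiant la propriété \textbf{(P)}, tous ses gradués étant de type $h$ ou $h+1$. La propriété \textbf{(P)} passant aux sous-quotients, la torsion de $H^0(X^{\geq h}_{I,\bar s_v},V_\xi)$, qui est sous-quotient de $H^0_c(X^{=h}_{I,\bar s_v},V_\xi)$, vérifie encore \textbf{(P)}. Il restera à voir qu'elle est entièrement de type $h+1$. Pour cela, on observera que les contributions de type $h$ — celles des représentations automorphes $\Pi$ dont la composante locale en $v$ est de la forme $\st_h(\chi_v)\times\pi'_v$ — pénètrent dans $H^0(X^{\geq h}_{I,\bar s_v},V_\xi)$ à travers une extension intermédiaire $\lexp p j^{=h}_{!*}$ d'un faisceau pervers d'Harris-Taylor dont la cohomologie est sans torsion (même phénomène qu'au rang maximal dans la preuve du lemme \ref{lem-coho1}), et en constituent donc un sous-quotient libre; la torsion nouvelle est par conséquent portée par la partie de type $h+1$, où elle apparaît comme obstruction à la stricticité d'un morphisme entre cohomologies d'extensions $j^{=h}_!$ et $\lexp p j^{=h}_{!*}$, exactement comme dans les preuves de \ref{prop-torsion0} et de \ref{lem-equivalence}. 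Sa réduction modulo $l$ a alors un support supercuspidal contenant un segment de Zelevinsky de longueur $h+1$, ce qui impose à la représentation automorphe attachée d'avoir en $v$ une composante locale de la forme $\st_{h+1}(\chi_v)\times\cdots$ avec $\chi_v$ non ramifié: c'est exactement la définition du type $h+1$.

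Le point délicat sera le contrôle fin de l'interaction des morphismes de bord avec la torsion: l'image par le bord du module sans torsion $H^\bullet(X^{\geq h+1}_{I,\bar s_v},V_\xi)$ n'est pas a priori saturée dans $H^\bullet_c(X^{=h}_{I,\bar s_v},V_\xi)$, de sorte qu'il faudra suivre la structure entière des filtrations de stratification de \cite{boyer-duke} pour garantir à la fois que la torsion résiduelle se place exactement en degré $0$ et qu'elle évite la partie libre de type $h$, donc qu'elle est bien de type $h+1$.
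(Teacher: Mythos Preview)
Your overall architecture---the excision triangle for $i_{h+1}$ and $j^{=h}$, Artin vanishing on the affine stratum to kill negative degrees, and Grothendieck--Verdier duality to bound above---is exactly the paper's (cf.\ (\ref{eq-sec00}), (\ref{eq-sec}), (\ref{eq-sec0})). But two things go wrong.

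First, several of your references are out of context. In \S\ref{para-principal} the level $I$ is \emph{maximal} at $v$: one works with the constant local system $V_\xi$, not with the Harris--Taylor sheaves $HT_\xi(\pi_{v,-1},\Pi_t)$ or the filtered $\Psi_{\IC,\varrho}$ of \S\ref{para-torsion-coho}. Corollary \ref{coro-torsion1}, lemma \ref{lem-coho1}, and the filtrations of \cite{boyer-duke} you invoke all live in the ramified setting and do not apply here. For the degree, one needs nothing beyond Artin over $\overline\Fm_l$ (so that $H^0_c(X^{=h}_{I,\bar s_v},V_\xi[d-h])$ is already torsion-free) together with the maximality of $h$.

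Second, and more seriously, your exclusion of type $h$ is broken. You claim the type-$h$ contributions enter $H^0(X^{\geq h}_{I,\bar s_v},V_\xi)$ through an intermediate extension $\lexp p j^{=h}_{!*}$ of a Harris--Taylor perverse sheaf with torsion-free cohomology. But $X^{\geq h}_{I,\bar s_v}$ is smooth, so $V_\xi[d-h]$ restricted to it \emph{is} the IC sheaf $\lexp p j^{=h}_{!*}\bigl(V_\xi[d-h]_{|X^{=h}}\bigr)$: the whole of $H^0(X^{\geq h}_{I,\bar s_v},V_\xi[d-h])$, torsion included, comes from this single extension. There is no Harris--Taylor decomposition available at unramified level to isolate a ``type-$h$ piece'', and the asserted torsion-freeness would contradict the very hypothesis defining $h$.

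The paper's argument is shorter and avoids this. From (\ref{eq-sec0}) the torsion $T=H^0(X^{\geq h}_{I,\bar s_v},V_\xi)_{tor}$ lifts to the free module $H^0_c(X^{=h}_{I,\bar s_v},V_\xi)$; since the preimage has finite index over the image of $H^0(X^{\geq h+1}_{I,\bar s_v},V_\xi)$, $T$ is \emph{also} a subquotient of a lattice in $H^{d-(h+1)}(X^{\geq h+1}_{I,\bar s_v},V_{\xi,\overline\Qm_l})$. One then applies the \emph{previous} lemma twice: once to $H^{d-h}_c(X^{=h})$ (types $h$ or $h+1$, the alternative being excluded since $T$ is a subquotient of $H^{d-h-1}(X^{\geq h+1})$), and once to $H^{d-(h+1)}(X^{\geq h+1})$ in the ``resp.'' case with parameter $h+1$. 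The intersection of the two constraints forces type $h+1$. This double lifting is the key step you are missing.
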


\rem le lemme et le suivant dÈcoulent du lemme \ref{lem-equivalence}; nous allons cependant 
en donner une preuve directe dans le cas de bonne rÈduction.

\begin{proof}
ConsidÈrons la suite exacte courte de faisceaux pervers 
\addtocounter{smfthm}{1}
\begin{equation} \label{eq-sec00}
0 \rightarrow i_{h+1,*} V_{\xi,\overline \Zm_l,|X_{I,\bar s_v}^{\geq h+1}}[d-h-1] \longrightarrow 
j^{\geq h}_! j^{\geq h,*} V_{\xi,\overline \Zm_l,|X^{\geq h}_{I,\bar s_v}}[d-h] \longrightarrow 
V_{\xi,\overline \Zm_l,|X^{\geq h}_{I,\bar s_v}}[d-h] \rightarrow 0.
\end{equation}
Les strates $X^{\geq h}_{I,\bar s_v}$ 
Ètant lisses et $j^{\geq h}$ Ètant affine, les trois termes de la suite exacte sont pervers et sont libres
au sens de la thÈorie de torsion naturelle issue de la structure $\overline \Zm_l$-linÈaire, cf. 
\cite{boyer-torsion} \S 1.1-1.3. Par ailleurs
il rÈsulte du thÈorËme d'Artin, cf. par exemple le thÈorËme 4.1.1 de \cite{BBD} et, donc, de 
l'affinitÈ des strates $X^{=h}_{I,\bar s_v}$, que 
les $H^i(X_{I,\bar s_v}^{\geq h},j^{\geq h}_! j^{\geq h,*} 
V_{\xi,\overline \Zm_l,|X^{\geq h}_{I,\bar s_v}}[d-h])$ sont nuls pour $i<0$ et sans torsion pour $i=0$,
de sorte que pour $i>0$, on a
\addtocounter{smfthm}{1}
\begin{equation} \label{eq-sec}
0 \rightarrow H^{-i-1}(X^{\geq h}_{I,\bar s_v},V_{\xi,\overline \Zm_l}[d-h]) \longrightarrow
H^{-i}(X^{\geq h+1}_{I,\bar s_v},V_{\xi,\overline \Zm_l}[d-h-1]) \rightarrow 0,
\end{equation}
et pour $i=0$,
\addtocounter{smfthm}{1}
\begin{multline} \label{eq-sec0}
0 \rightarrow H^{-1}(X^{\geq h}_{I,\bar s_v},V_{\xi,\overline \Zm_l}[d-h]) \longrightarrow
H^{0}(X^{\geq h+1}_{I,\bar s_v},V_{\xi,\overline \Zm_l}[d-h-1]) \longrightarrow \\
H^0(X^{\geq h}_{I,\bar s_v},j^{\geq h}_! j^{\geq h,*} V_{\xi,\overline \Zm_l}[d-h] ) \longrightarrow
H^{0}(X^{\geq h}_{I,\bar s_v},V_{\xi,\overline \Zm_l}[d-h]) \rightarrow \cdots
\end{multline}
Ainsi si la torsion de $H^i(X^{\geq h}_{I,\bar s_v},V_\xi)$ est non nulle alors $i \geq 0$ et, en utilisant
la dualitÈ de Grothendieck-Verdier, le plus petit tel indice est nÈcessairement $i=0$. Par ailleurs la
torsion de $H^0(X^{\geq h}_{I,\bar s_v},V_\xi)$ se relËve ‡ la fois dans 
$H^0_c(X^{=h}_{I,\bar s_v},V_\xi)$ et $H^0(X^{\geq h+1}_{I,\bar s_v},V_\xi)$, lesquels sont libres.
Ainsi d'aprËs le lemme prÈcÈdent, la torsion de $H^0(X^{\geq h}_{I,\bar s_v},V_\xi)$ vÈrifie
la propriÈtÈ \textbf{(P)} en Ètant de type $h+1$.
\end{proof}

\begin{lemm}
Avec les notations prÈcÈdentes, pour tout $1 \leq h' \leq h$, l'indice $h-h'$ est le plus grand $i$ tel que la
torsion de $H^{-i}(X^{\geq h'}_{I,\bar s_v},V_\xi)_{tor}$ est non nulle. En outre celle-ci vÈrifie la propriÈtÈ
\textbf{(P)} en Ètant de type $h+1$.
\end{lemm}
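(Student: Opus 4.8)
The plan is to argue by descending induction on $h'$, from $h'=h$ down to $h'=1$. The case $h'=h$ is the previous lemma (which in addition yields that the torsion at the top index satisfies \textbf{(P)} and is of type $h+1$). As there, and as in (\ref{eq-sec00})--(\ref{eq-sec0}), the cohomological degree is taken in the perverse normalisation, so that on $X^{\geq h'}_{I,\bar s_v}$ the local system $V_\xi$ sits in degree $d-h'$; consequently the index $i=h-h'$ always corresponds to the \emph{fixed} ordinary degree $d-h$, and the statement to be propagated downwards in $h'$ reads: the torsion of $H^{d-h}(X^{\geq h'}_{I,\bar s_v},V_\xi)$ is non-zero, vanishes in all strictly lower degrees, and is $\Tm_I$-isomorphic to the corresponding group one stratum higher.

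Granting this for $h'+1$ with $1\leq h'<h$, I would feed (\ref{eq-sec00}) with $h$ replaced by $h'$ into its long exact sequence of cohomology over the projective variety $X^{\geq h'}_{I,\bar s_v}$, i.e.\ the excision sequence relating $H^*_c(X^{=h'}_{I,\bar s_v},V_\xi)$, $H^*(X^{\geq h'}_{I,\bar s_v},V_\xi)$ and $H^*(X^{\geq h'+1}_{I,\bar s_v},V_\xi)$. Since $X^{=h'}_{I,\bar s_v}$ is affine of dimension $d-h'$, Artin vanishing gives $H^m_c(X^{=h'}_{I,\bar s_v},V_\xi)=0$ for $m<d-h'$, and, exactly as in the proof of the previous lemma, the top group $H^{d-h'}_c(X^{=h'}_{I,\bar s_v},V_\xi)$ is $\overline\Zm_l$-free. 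As $h'<h$ one has $d-h<d-h'$, so these compactly supported groups vanish in every degree $\leq d-h$; the excision sequence therefore yields $\Tm_I$-isomorphisms
$$H^m(X^{\geq h'}_{I,\bar s_v},V_\xi)\ \simeq\ H^m(X^{\geq h'+1}_{I,\bar s_v},V_\xi)\qquad(m\leq d-h-1),$$
and, at the borderline degree $m=d-h$ (the only degree needing care, and only when $h'=h-1$, where $d-h+1=d-h'$), a four-term exact sequence whose outer term lands in the free module $H^{d-h'}_c(X^{=h'}_{I,\bar s_v},V_\xi)$, hence a $\Tm_I$-isomorphism on torsion submodules $H^{d-h}(X^{\geq h'}_{I,\bar s_v},V_\xi)_{tor}\simeq H^{d-h}(X^{\geq h'+1}_{I,\bar s_v},V_\xi)_{tor}$.

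Plugging in the inductive hypothesis, the displayed isomorphisms show that the torsion of $H^m(X^{\geq h'}_{I,\bar s_v},V_\xi)$ vanishes for $m<d-h$ and is non-zero for $m=d-h$, i.e.\ that $h-h'$ is the largest $i$ with $H^{-i}(X^{\geq h'}_{I,\bar s_v},V_\xi)_{tor}\neq(0)$; moreover that torsion module is $\Tm_I$-isomorphic to its counterpart at level $h'+1$, which by induction satisfies \textbf{(P)} and is of type $h+1$, and since \textbf{(P)} together with the type is preserved under isomorphism (indeed under sub-quotients, by the remark following the definition of \textbf{(P)}), the same holds at level $h'$, completing the induction. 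The only delicate point is the bookkeeping at the borderline step $h'=h-1$, where the comparison with level $h$ is an isomorphism only after passing to torsion submodules; everything else is a formal consequence of Artin vanishing and of the projectivity of the strata closures. Alternatively, one may deduce the lemma from Lemma~\ref{lem-equivalence} via Proposition~\ref{prop-equivalence0}, as indicated in the remark preceding the previous lemma.
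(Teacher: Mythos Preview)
Your proof is correct and follows the same approach as the paper: descending induction on $h'$ from $h$ down to $1$, with the base case given by the previous lemma and the inductive step driven by the excision sequence (\ref{eq-sec00}) with $h$ replaced by $h'$, together with Artin vanishing for the affine open stratum. You have in fact been more careful than the paper, which invokes only (\ref{eq-sec}) and the inductive hypothesis; you correctly isolate the borderline step $h'=h-1$, where (\ref{eq-sec}) does not directly apply and one must instead use the analogue of (\ref{eq-sec0}) together with the freeness of $H^{d-h'}_c(X^{=h'}_{I,\bar s_v},V_\xi)$ to conclude that the map is an isomorphism on torsion submodules.
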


\begin{proof}
On raisonne par rÈcurrence sur $h'$ de $h$ ‡ $1$. Le cas $h'=h$ est traitÈ dans le lemme prÈcÈdant,
supposons donc le rÈsultat acquis jusqu'au rang $h'+1$ et traitons le cas de $h'$. On reprend
alors les suites exactes (\ref{eq-sec00}) avec $h'$. Le rÈsultat dÈcoule alors de (\ref{eq-sec}) 
et de l'hypothËse de rÈcurrence.
\end{proof}

Le cas $h'=1$ du lemme prÈcÈdent, en utilisant le thÈorËme de changement de base lisse, fournit alors
l'ÈnoncÈ suivant.

\begin{prop} \label{prop-p1}
Soit $i$ maximal, s'il existe, tel que la torsion de $H^{-i}(X_{I,\bar \eta_v},V_\xi)$ est non nul.
Il vÈrifie alors la propriÈtÈ \textbf{(P)} en Ètant de type $i+2$.
\end{prop}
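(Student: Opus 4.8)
The proposition is a direct consequence of the preceding lemma, read through the smooth base change isomorphism $H^{j}(X_{I,\bar\eta_v},V_\xi)\simeq H^{j}(X^{\geq 1}_{I,\bar s_v},V_\xi)$ already recorded at the start of the subsection (valid here because the level $I$ is maximal at $v$, so that $X_{I,v}$ is smooth and projective over $\OC_v$ and $V_\xi$ extends to a lisse sheaf, hence has trivial nearby cycles). The plan is simply to assemble these ingredients.

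First I would recall the integer $h$ fixed at the head of the subsection: the largest $1\le h\le d$ for which some $H^{j}(X^{\geq h}_{I,\bar s_v},V_\xi)$ has nonzero torsion; one already knows $1\le h<d$, and such an $h$ exists precisely when $H^{\ast}(X_{I,\bar\eta_v},V_\xi)$ has torsion, i.e. exactly under the standing hypothesis. The preceding lemma, taken in the case $h'=1$, then asserts: (a) the index $h-1$ is the largest $j$ for which $H^{-j}(X^{\geq 1}_{I,\bar s_v},V_\xi)_{tor}\neq(0)$; and (b) this torsion module verifies property \textbf{(P)} while being of type $h+1$.

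Next I would transport (a) and (b) across the base change isomorphism. Since $H^{-j}(X_{I,\bar\eta_v},V_\xi)\simeq H^{-j}(X^{\geq 1}_{I,\bar s_v},V_\xi)$ in every degree, the integer ``$i$ maximal such that $H^{-i}(X_{I,\bar\eta_v},V_\xi)$ has nonzero torsion'' exists and equals $h-1$, and the torsion in question is identified with $H^{-(h-1)}(X^{\geq 1}_{I,\bar s_v},V_\xi)_{tor}$; by (b) it verifies \textbf{(P)} and is of type $h+1$. Substituting $h=i+1$ rewrites ``type $h+1$'' as ``type $i+2$'', which is exactly the assertion. No step of the present argument is an obstacle: all the work has already been carried out in the two preceding lemmas, whose crux is the descent along the Newton stratification $X^{\geq h}\supset X^{\geq h+1}$ --- using the short exact sequence $(\ref{eq-sec00})$ of free perverse sheaves, Artin's vanishing theorem on the affine strata, and Grothendieck--Verdier duality --- propagating the torsion class from degree $0$ at depth $h$ down to degree $-(h-h')$ at depth $h'$ without creating torsion in any higher degree, and reading off its \textbf{(P)}-type from the classification lemma of the previous subsection applied to $H^0_c(X^{=h}_{I,\bar s_v},V_\xi)$ and $H^0(X^{\geq h+1}_{I,\bar s_v},V_\xi)$.
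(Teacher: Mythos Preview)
Your proof is correct and follows exactly the paper's approach: the proposition is stated (and proved) in the paper as an immediate consequence of the preceding lemma in the case $h'=1$, transported via the smooth base change isomorphism $H^{j}(X_{I,\bar\eta_v},V_\xi)\simeq H^{j}(X^{\geq 1}_{I,\bar s_v},V_\xi)$, which gives $i=h-1$ and hence type $h+1=i+2$. You have simply made explicit the substitution $h=i+1$ and recapitulated how the earlier lemmas feed in, which the paper leaves implicit.
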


\begin{coro} \label{coro-principal}
Soit $\mathfrak m$ un idÈal maximal de $\Tm_I$ et $i$ maximal s'il existe, tel que
la torsion de $H^{-i}(X_{I,\bar \eta},V_\xi)_{\mathfrak m}$ est non nulle. Il existe alors une collection
$\Pi(w)$ indexÈe par les places $w$ de $F$ divisant une place fixÈe $u$ de $E$ au dessus d'un premier
$p \in \Spl(I)$, de reprÈsentations irrÈductibles automorphes $\xi$-cohomologiques
telles que:
\begin{itemize}
\item pour tout $q \in \Spl(I)$ distinct de $p$ (resp. $q=p$), 
la composante locale en $q$ de $\Pi(w)$ est non ramifiÈe,
ses paramËtres de Satake aux places $v$ au dessus de $q$ (resp et distinctes de $w$)
Ètant donnÈs par $S_{\mathfrak m}(v)$;

\item la reprÈsentation $\Pi(w)_w$
est de la forme $\st_{i+2}(\chi_{w,0}) \times \chi_{w,1} \times \cdots \times \chi_{w,d-i-2}$, o˘ 
$\chi_{w,0},\cdots,\chi_{w,d-i-2}$ sont des caractËres non ramifiÈs de $F_w$.
\end{itemize}
\end{coro}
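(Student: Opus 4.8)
The plan is to reduce the corollary to Proposition \ref{prop-p1} by a base-change-and-localize argument, exploiting the freedom to reduce the Shimura variety $X_{I,\eta}/F$ at any place $w$ where the level is maximal. First I would fix a prime $p$ all of whose places in $F$ lie in $\Spl(I)$, a place $u$ of $E$ above $p$, and a place $w$ of $F$ above $u$. Since $p\in\Spl(I)$, the level $I$ is maximal at $w$, so the whole setup of \S\ref{para-principal} applies with $v=w$. As $X_{I,\eta}$ is proper and the Hecke correspondences are defined over $F$, the smooth base change theorem provides a $\Tm_I$-equivariant isomorphism $H^{-i}(X_{I,\bar\eta},V_\xi)\simeq H^{-i}(X_{I,\bar\eta_w},V_\xi)$. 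Localization at $\mathfrak m$ being exact on $\Tm_I$-modules and commuting with the formation of the torsion submodule, the hypothesis translates into: $i$ is maximal such that $H^{-i}(X_{I,\bar\eta_w},V_\xi)_{\mathfrak m}$ has nonzero torsion.

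Next I would apply Proposition \ref{prop-p1} at $v=w$; since the stratification filtration underlying its proof (the two preceding lemmas) is built from $\Tm_I$-equivariant exact sequences, the statement goes through verbatim after applying the exact functor $(-)_{\mathfrak m}$. Thus $H^{-i}(X_{I,\bar\eta_w},V_\xi)_{\mathfrak m,tor}$ carries a finite filtration whose graded pieces are, up to subquotient, quotients $\Gamma/\Gamma'$ of stable $\Tm_I$-lattices inside $(\Pi_k^{\oo,w})^{I^w}\otimes\widetilde\Pi_{k,w}^{GL_d(\OC_w)}$, with $\Pi_k$ irreducible automorphic, integral and $\xi$-cohomological, $\Pi_{k,w}$ ramified of type $i+2$, i.e. of the form $\st_{i+2}(\chi_0)\times\chi_1\times\cdots\times\chi_{d-i-2}$ with $\chi_0,\dots,\chi_{d-i-2}$ unramified characters of $F_w^\times$, and $\widetilde\Pi_{k,w}$ unramified with the same cuspidal support. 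The module being nonzero, at least one such graded piece is nonzero; I would pick it and set $\Pi(w):=\Pi_k$.

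It then remains to read off the three asserted properties. That $\Pi(w)$ is irreducible, automorphic and $\xi$-cohomological, and that $\Pi(w)_w$ has the stated Steinberg-by-unramified form, is exactly what being of type $i+2$ in property \textbf{(P)} gives. For a place $v$ above a prime $q\in\Spl(I)$ with $q\ne p$ (resp. a place $v\ne w$ above $p$): since $(\Pi(w)^{\oo,w})^{I^w}\ne(0)$ and $I$ is maximal at every place of $\Spl(I)$ other than $w$, the component $\Pi(w)_v$ is unramified there; moreover the action of $\Tm_I$ on $\Gamma/\Gamma'$, hence on the chosen graded piece, factors through the reduction mod $l$ of the Hecke eigensystem of $\Pi(w)$, and nonvanishing after $(-)_{\mathfrak m}$ forces this reduction to be the system defining $\mathfrak m$; equivalently, the mod $l$ Satake parameters of $\Pi(w)$ at $v$ are $S_{\mathfrak m}(v)$. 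I would close by observing that $\Pi(w)$ is ramified at $w$ but unramified at all other places of $\Spl(I)$, so $\Pi(w)\not\simeq\Pi(w')$ for $w\ne w'$, which is the weak congruence of \cite{vigneras-langlands}, \S 3.

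The genuinely hard input is entirely upstream, in Proposition \ref{prop-p1} and hence in the torsion results of \S\ref{para-torsion-coho}; once those are granted the corollary is formal. The only points deserving care are the $\Tm_I$-equivariance of the base change isomorphism — harmless since $w\nmid l$ and the Hecke data live over $F$ — and the bookkeeping that property \textbf{(P)}, which is stable under subquotients, remains meaningful after the exact localization $(-)_{\mathfrak m}$, so that a nonzero graded piece really does pin down a representation $\Pi_k$ whose mod $l$ Hecke system equals $\mathfrak m$. Neither is a real difficulty.
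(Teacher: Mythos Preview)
Your proposal is correct and follows essentially the same route as the paper: for each place $w\in\Spl(I)$ one uses smooth base change to transport the torsion class to the model at $w$, applies Proposition~\ref{prop-p1} (after the exact localization at $\mathfrak m$) to obtain property \textbf{(P)} of type $i+2$, and then reads off $\Pi(w)$ with the required local shape at $w$ and the prescribed mod~$l$ Satake parameters elsewhere. The paper's proof is slightly terser---it picks an irreducible $\Tm_I$-submodule $M$ of the torsion rather than a nonzero graded piece of the \textbf{(P)}-filtration---but this is only a cosmetic difference.
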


\begin{proof}
ConsidÈrons un sous-$\Tm_I$-module irrÈductible $M$ de $H^{-i}(X_{I,\bar \eta},V_\xi)_{\mathfrak m,tor}$.
Pour toute place $w \in \Spl(I)$, d'aprËs le changement de base lisse, on a 
$H^{-i}(X_{I,\bar \eta},V_\xi)_{\mathfrak m} \simeq H^{-i}(X^{\geq 1}_{I,\bar s_v},V_\xi)_{\mathfrak m}$.
D'aprËs la proposition prÈcÈdente ce module $M$ vÈrifie la propriÈtÈ \textbf{(P)} en Ètant de type $i+2$
de sorte qu'il existe une reprÈsentation automorphe irrÈductible $\xi$-cohomologique
$\Pi(w)$ telle que:
\begin{itemize}
\item sa composante locale en $w$ est de la forme 
$\st_{i+2}(\chi_{w,0}) \times \chi_{w,1} \times \cdots \times \chi_{w,d-i-2}$, o˘ 
$\chi_{w,0},\cdots,\chi_{w,d-i-2}$ sont des caractËres non ramifiÈs de $F_w$;

\item $M$ est un sous-quotient de la rÈduction modulo $l$ de $\Pi(w)$ et donc en particulier
pour toute place $v$ diffÈrente de $w$, au dessus d'un $p \in \Spl(I)$, sa composante locale en $v$
est non ramifiÈe avec pour paramËtres de Satake les ÈlÈments de $S_{\mathfrak m}(v)$.
\end{itemize}
\end{proof}

\subsection{SynthËse}

Fixons  
\begin{itemize}
\item une reprÈsentation irrÈductible algÈbrique $\xi$ de $G$ de dimension finie,

\item une place $v \in \Spl$ et une $\overline \Fm_l$-reprÈsentation supercuspidale $\varrho$
de $GL_{g_{-1}}(F_{v})$ pour $1 \leq g_{-1} \leq d$ telle que 
\begin{itemize}
\item $s_\xi(\varrho)=\lfloor \frac{d}{g_{-1}} \rfloor \geq 4$ et

\item $m(\varrho)=2$.
\end{itemize}

\item On choisit alors un niveau fini $I$ tel qu'il existe une reprÈsentation automorphe irrÈductible 
$\xi$-cohomologique $\Pi$ ayant des vecteurs non nuls invariants sous $I$ et dont la composante locale 
en $v$ est de la forme $\speh_{s_\xi(\varrho)}(\pi_{v}) \times ?$ o˘ $\pi_{v}$ est de $\varrho$-type $-1$.
\end{itemize}

Soit alors $\mathfrak m$ l'idÈal maximal de $\Tm_I$ dÈfini par $\Pi$ de sorte que,
d'aprËs le thÈorËme \ref{theo-equivalence2}, il existe
un indice $i$ pour lequel la torsion de $H^i(X_{I,\bar \eta},V_\xi)_{\mathfrak m}$ est non nulle.
Il dÈcoule alors du corollaire \ref{coro-principal} qu'il
existe une collection $\Pi(w)$ indexÈe par les places $w$ de $F$ divisant  une place fixÈe
au dessus d'un premier $p \in \Spl(I)$, de reprÈsentations irrÈductibles automorphes $\xi$-cohomologiques
deux ‡ deux non isomorphes entre elles et telles que:
\begin{itemize}
\item pour tout $q \in \Spl(I)$ distinct de $p$ (resp. $q=p$), 
la composante locale en $q$ de $\Pi(w)$ est non ramifiÈe,
ses paramËtres de Satake aux places $v'$ au dessus de $q$ (resp. et distinctes de $w$)
Ètant donnÈs par $S_{\mathfrak m}(v')$;

\item la reprÈsentation $\Pi(w)_w$
est de la forme $\st_{i+2}(\chi_{w,0}) \times \chi_{w,1} \times \cdots \times \chi_{w,d-i-2}$, o˘ 
$\chi_{w,0},\cdots,\chi_{w,d-i-2}$ sont des caractËres non ramifiÈs de $F_w$.
\end{itemize}

\bibliographystyle{plain}
\bibliography{bib-ok}

\def\cftil#1{\ifmmode\setbox7\hbox{$\accent"5E#1$}\else
  \setbox7\hbox{\accent"5E#1}\penalty 10000\relax\fi\raise 1\ht7
  \hbox{\lower1.15ex\hbox to 1\wd7{\hss\accent"7E\hss}}\penalty 10000
  \hskip-1\wd7\penalty 10000\box7} \def\cprime{$'$}
\begin{thebibliography}{10}

\bibitem{BBD}
J.~Bernstein, A.A. Beilinson, and P.~Deligne.
\newblock Faisceaux pervers.
\newblock In {\em Analyse et topologie sur les espaces singuliers, Asterisque
  100}, 1982.

\bibitem{boyer-invent}
P.~Boyer.
\newblock Mauvaise rÈduction des variÈtÈs de {D}rinfeld et correspondance de
  {L}anglands locale.
\newblock {\em Invent. Math.}, 138(3):573--629, 1999.

\bibitem{boyer-invent2}
P.~Boyer.
\newblock Monodromie du faisceau pervers des cycles \'evanescents de quelques
  vari\'et\'es de {S}himura simples.
\newblock {\em Invent. Math.}, 177(2):239--280, 2009.

\bibitem{boyer-compositio}
P.~Boyer.
\newblock Cohomologie des systËmes locaux de {H}arris-{T}aylor et applications.
\newblock {\em Compositio}, 146(2):367--403, 2010.

\bibitem{boyer-duke}
P.~Boyer.
\newblock La cohomologie des espaces de {L}ubin-{T}ate est libre.
\newblock {\em soumis}, 2013.

\bibitem{boyer-torsion}
P.~Boyer.
\newblock Filtrations de stratification de quelques variÈtÈs de shimura
  simples.
\newblock {\em Bulletin de la SMF}, 142, fascicule 4:777--814, 2014.

\bibitem{boyer-aif}
P.~Boyer.
\newblock Congruences automorphes et torsion dans la cohomologie d'un systËme
  local d'{H}arris-{T}aylor.
\newblock {\em Annales de l'Institut Fourier}, 65 n∞4:1669--1710, 2015.

\bibitem{boyer-entier}
P.~Boyer.
\newblock Faisceaux pervers entiers d'harris-taylor.
\newblock {\em preprint}, 2015.

\bibitem{boyer-imj}
P.~Boyer.
\newblock Sur la torsion dans la cohomologie des variÈtÈs de {S}himura de
  {K}ottwitz-{H}arris-{T}aylor.
\newblock {\em preprint}, 2015.

\bibitem{scholze-cara}
A.~Caraiani and P.~Scholze.
\newblock On the generic part of the cohomology of compact unitary shimura
  varieties.
\newblock {\em Preprint}, 2015.

\bibitem{dat-jl}
J.-F. Dat.
\newblock Un cas simple de correspondance de {J}acquet-{L}anglands modulo $l$.
\newblock {\em Proc. London Math. Soc. 104}, pages 690--727, 2012.

\bibitem{h-t}
M.~Harris, R.~Taylor.
\newblock {\em The geometry and cohomology of some simple {S}himura varieties},
  volume 151 of {\em Annals of Mathematics Studies}.
\newblock Princeton University Press, Princeton, NJ, 2001.

\bibitem{ito2}
T.~Ito.
\newblock Hasse invariants for somme unitary {S}himura varieties.
\newblock {\em Math. Forsch. Oberwolfach report 28/2005}, pages 1565--1568,
  2005.

\bibitem{lan-suh}
K.-W. Lan and J.~Suh.
\newblock {V}anishing theorems for torsion automorphic sheaves on compact
  {PEL}-type {S}himura varieties.
\newblock {\em Duke Math.}, 161(6):951--1170, 2012.

\bibitem{vigneras-induced}
M.-F. Vign{\'e}ras.
\newblock Induced {$R$}-representations of {$p$}-adic reductive groups.
\newblock {\em Selecta Math. (N.S.)}, 4(4):549--623, 1998.

\bibitem{vigneras-langlands}
M.-F. Vign{\'e}ras.
\newblock Correspondance de {L}anglands semi-simple pour {${\rm GL}(n,F)$}
  modulo {$l \neq p$}.
\newblock {\em Invent. Math.}, 144(1):177--223, 2001.

\bibitem{zelevinski2}
A.~V. Zelevinsky.
\newblock Induced representations of reductive {${p}$}-adic groups. {II}. {O}n
  irreducible representations of {${\rm GL}(n)$}.
\newblock {\em Ann. Sci. \'Ecole Norm. Sup. (4)}, 13(2):165--210, 1980.

\end{thebibliography}

\end{document}